\DeclareMathOperator{\id}{id}
\DeclareMathOperator{\GL}{GL}
\DeclareMathOperator{\Mat}{Mat}
\DeclareMathOperator{\mps}{mp}
\DeclareMathOperator{\bfs}{bf}
\DeclareMathOperator{\cent}{Cent}
\DeclareMathOperator{\dom}{dom}
\newcommand{\der}{\mathrm d}
\newcommand{\N}{\mathbb{N}}
\newcommand{\bC}{\bar{C}}
\newcommand{\kk}{\mathbbm k}
\newcommand{\rr}{\mathbbm r}
\newcommand{\rs}{\mathbbm s}
\newcommand{\MB}{{\bf M}}
\newcommand{\SB}{{\bf S}}
\newcommand{\rrb}{{\bf r}}
\newcommand{\rsb}{{\bf s}}
\newcommand{\Z}{\mathbb{Z}}
\newcommand{\RS}{\mathbb{S}}
\newcommand{\cA}{\mathcal{A}}
\newcommand{\cB}{\mathcal{B}}
\newcommand{\cE}{\mathcal{E}}
\newcommand{\cF}{\mathcal{F}}
\newcommand{\cM}{\mathcal{M}}
\newcommand{\cN}{\mathcal{N}}
\newcommand{\cO}{\mathcal{O}}
\newcommand{\cR}{\mathcal{R}}
\newcommand{\cU}{\mathcal{U}}
\newcommand{\cV}{\mathcal{V}}
\newcommand{\cS}{\mathcal{S}}
\newcommand{\bb}{\mathbf{b}}
\newcommand{\cc}{\mathbf{c}}
\newcommand{\iimath}{k}
\newcommand{\jjmath}{\ell}
\newcommand{\eqcolon}{\mathrel{\mathop:}=}
\newcommand{\ug}{\mathrm{g}}
\newcommand{\mdom}[1][]{\operatorname{dom}^{\operatorname{mp}}_{#1}}
\newcommand*{\mpsymbol}{
	\!\protect\raisebox{0.534ex}{\protect\scalebox{.28}{
			\begin{tikzpicture}[line width=0.6ex]
			\draw[arrows={<->}]
			(0,0) -- (5ex,0);
			\end{tikzpicture}
		}}\!
	}
\newcommand*{\mpsymboltikz}{
	\!\protect\raisebox{0.534ex}{\protect\scalebox{.28}{
			\begin{tikzpicture}[line width=0.6ex]
			\draw[arrows={<->}]
			(0,0) -- (5ex,0);
			\end{tikzpicture}
		}}\!
	}
\newcommand*{\bfsymbol}{
	\!\protect\raisebox{0.534ex}{\protect\scalebox{.28}{
			\begin{tikzpicture}[line width=0.6ex]
			\draw[loosely dotted, dash phase=-0.22ex, arrows={<->}]
			(0,0) -- (5ex,0);
			\end{tikzpicture}
		}}\!
	}
\newcommand{\cupn}{\!\cup\!}
\newcommand{\kotimes}{\otimes_{\kk}}
\newcommand{\Cotimes}{\otimes_C}
\newcommand{\bkotimes}[2]{\bigotimes_{#1}^{#2}{\mkern-3mu}{}_{\kk}\,}
\newcommand{\bCotimes}[2]{\bigotimes_{#1}^{#2}{\mkern-3mu}{}_C\,}
\newcommand{\bCotimesi}{(\bigotimes_{\!C})}
\newcommand{\bbCotimes}[2]{\bigotimes_{#1}^{#2}{\mkern-3mu}{}_{\bC}\,}
\newcommand{\bbCotimesi}{(\bigotimes_{\!\bC})}
\newcommand{\seqn}{n_1,\dots,n_G}
\newcommand{\udn}{\operatorname{UD}_{\seqn}(x)}
\newcommand{\ud}[1]{\operatorname{UD}_{#1}}
\newcommand{\mm}[1]{\operatorname{Mat}_{#1}(\kk)}
\newcommand{\gm}[1]{\operatorname{GM}_{#1}}
\newcommand{\Langle}{\!\mathop{<}\!}
\newcommand{\Rangle}{\!\mathop{>}}
\def\moverlay{\mathpalette\mov@rlay}
\def\mov@rlay#1#2{\leavevmode\vtop{
    \baselineskip\z@skip \lineskiplimit-\maxdimen
    \ialign{\hfil$#1##$\hfil\cr#2\crcr}}}
\newcommand{\plangle}{\moverlay{(\cr<}}
\newcommand{\prangle}{\moverlay{)\cr>}}
\newcommand{\px}[1]{\kk\Langle X^{(#1)} \Rangle}
\newcommand{\ppx}{\kk\Langle X \Rangle}
\newcommand{\rx}[1]{\kk\plangle X^{(#1)} \prangle}
\newcommand{\re}{\cR_{\kk}(X)}
\newcommand{\pz}{D\Langle Z \Rangle}
\newcommand{\rz}{D\plangle Z \prangle}
\newcommand{\rez}{\cR_D(Z)}
\newcommand{\rezz}{\cR_{\kk}(Z)}
\newcommand{\mpvar}[1]{\mathcal{MPV}^{\ug}(#1)}
\newcommand{\prempr}[2]{\kk\plangle X^{(#1)} \protect\mpsymbol \cdots \protect\mpsymbol X^{(#2)}\prangle}
\newcommand{\mpr}{\protect\prempr{1}{G}}
\newcommand{\pretxx}[2]{\kk\Langle X^{(#1)} \mpsymbol \cdots \mpsymbol X^{(#2)}\Rangle}
\newcommand{\txx}{\pretxx{1}{G}}
\newcommand{\pxy}{\kk\Langle X\cup Y \Rangle}
\newcommand{\bfa}{\kk\Langle X \bfsymbol Y \Rangle}
\newcommand{\bfsf}{\kk\plangle X \bfsymbol Y\prangle}
\newcommand{\rexy}{\cR_{\kk}(X\cup Y)}
\newcommand{\bfvar}[1]{\mathcal{BFV}^g_{#1}}
\newtheorem{thm}{Theorem}[section]
\newtheorem{lem}[thm]{Lemma}
\newtheorem{cor}[thm]{Corollary}
\newtheorem{prop}[thm]{Proposition}
\theoremstyle{definition}
\newtheorem{defn}[thm]{Definition}
\theoremstyle{remark}
\newtheorem{rem}[thm]{Remark}
\numberwithin{equation}{section}
\begin{document}

\setcounter{tocdepth}{3}
\contentsmargin{2.55em} 
\dottedcontents{section}[3.8em]{}{2.3em}{.4pc} 
\dottedcontents{subsection}[6.1em]{}{3.2em}{.4pc}
\dottedcontents{subsubsection}[8.4em]{}{4.1em}{.4pc}

\makeatletter
\newcommand{\mycontentsbox}{%
{\centerline{NOT FOR PUBLICATION}
\addtolength{\parskip}{-2.3pt}
\tableofcontents}}
\def\enddoc@text{\ifx\@empty\@translators \else\@settranslators\fi
\ifx\@empty\addresses \else\@setaddresses\fi
\newpage\mycontentsbox\newpage\printindex}
\makeatother

\setcounter{page}{1}

\title{Multipartite rational functions}

\author[Igor Klep]{Igor Klep${}^1$}
\address{Igor Klep, University of Ljubljana, Faculty of Mathematics and Physics}
\email{igor.klep@fmf.uni-lj.si}
\thanks{${}^1$Supported by the Slovenian Research Agency grants J1-8132, N1-0057 and P1-0222. Partially supported by the Marsden Fund Council of the Royal Society of New Zealand.}

\author[Victor Vinnikov]{Victor Vinnikov${}^2$}
\address{Victor Vinnikov, 
Ben-Gurion University of the Negev, Department of Mathematics}
    \email{vinnikov@math.bgu.ac.il}
    \thanks{${}^2$Supported by the Deutsche Forschungsgemeinschaft (DFG) Grant No. SCHW 1723/1-1.}

\author[Jurij Vol\v{c}i\v{c}]{Jurij Vol\v{c}i\v{c}${}^3$}
\address{Jurij Vol\v{c}i\v{c}, Texas A\&M University, Department of Mathematics}
    \email{volcic@math.tamu.edu}
\thanks{${}^3$Supported by the NSF grant DMS 1954709. Partially supported by the University of Auckland Doctoral Scholarship and by the Deutsche Forschungsgemeinschaft (DFG) Grant No. SCHW 1723/1-1.}

\subjclass[2010]{Primary 16K40, 12E15; Secondary 47A56, 16R50}
\date{\today}
\keywords{Universal skew field of fractions, noncommutative rational function, free skew field, tensor product of free algebras, multipartite rational function, free function theory}

\begin{abstract}
Consider a tensor product of free algebras over a  field $\kk$, the so-called multipartite free algebra $\cA=\px{1}\otimes\cdots\otimes\px{G}$. It is well-known that $\cA$ is a domain, but not a fir nor even a Sylvester domain. Inspired by recent advances in free analysis, formal rational expressions over $\cA$ together with their matrix representations in $\mm{n_1}\otimes\cdots\otimes\mm{n_G}$ are employed to construct a skew field of fractions $\cU$ of $\cA$, whose elements are called multipartite rational functions. It is shown that $\cU$ is the universal skew field of fractions of $\cA$ in the sense of Cohn. As a consequence a multipartite analog of Amitsur's theorem on rational identities relating evaluations in matrices over $\kk$ to evaluations in skew fields is obtained. The characterization of $\cU$ in terms of matrix evaluations fits naturally into the wider context of free noncommutative function theory, where multipartite rational functions are interpreted as higher order noncommutative rational functions with an associated difference-differential calculus and linear realization theory. Along the way an explicit construction of the universal skew field of fractions of $D\otimes \ppx$ for an arbitrary skew field $D$ is given using matrix evaluations and formal rational expressions. 
\end{abstract}

\maketitle


\section{Introduction}

The question of embeddability of a noncommutative ring into a skew field is much more complex than its counterpart in the commutative setting. The classical construction of a field of fractions extends beyond commutative rings in a straightforward way only to Ore domains \cite[Section 2.1]{MR}: every left (resp.~right) Ore domain admits a left (resp.~right) classical ring of quotients, whose elements are of the form $a^{-1}b$ (resp.~$a^{-1}b$). However, in general not only is there no simple criterion for the existence of a skew field of fractions \cite[Section 6.7]{Co1}, even if one exists it is not necessarily unique \cite{Fi}. It is therefore natural to ask whether there exists a skew field of fractions of a given ring that is the largest possible in some sense. Cohn made this notion precise by introducing the \emph{universal skew field of fractions} of a ring \cite[Section 7.2]{Co3}: if $R$ is a ring and $\cU$ is its skew field of fractions, then $\cU$ is called universal if every epimorphism from $R$ to a skew field $D$ extends to a specialization from $\cU$ to $D$.

Well-known examples of rings admitting universal skew fields of fractions are Sylvester domains \cite[Sections 5.5 and 7.5]{Co3}, and among them firs (free ideal rings) and semifirs. If $R$ is a Sylvester domain, then the localization with respect to full matrices over $R$ yields a universal skew field of fractions of $R$ by \cite[Chapter 4]{Co1}. However, the elements of this construct often lack simple canonical forms. In some special cases one can find more explicit descriptions; for instance, a free algebra over a field is a fir and \cite{Ami,Le,Li,HMV} provide different constructions of the free skew field, i.e., its universal skew field of fractions. 

Apart from the aforementioned family, only isolated examples of rings admitting a universal skew field of fractions are known. We now proceed to describe a new class with this property. Let $\kk$ be a field of characteristic 0 and $G\in \N$. By $\txx$ we denote the tensor product of free $\kk$-algebras over the sets $X^{(1)},\dots, X^{(G)}$, which we call a \emph{multipartite free $\kk$-algebra}. This terminology alludes to bi- and multipartite systems of operators arising in quantum theory \cite{Pe,H} and free probability \cite{Voi1}; another source of multipartite free variables are trace monoids in automata theory \cite{DK,Wor}. Homological properties of these rings and their generalizations (free partially commutative algebras) are studied in \cite{DL}. While it is easy to see that a multipartite free algebra is a domain, it satisfies the Ore condition if and only if $|X^{(1)}|=\cdots=|X^{(G)}|=1$ and it is a Sylvester domain if and only if $G\le 2$ and $|X^{(1)}|=1$ or $|X^{(2)}|=1$ by \cite[Theorem 3.1]{Co2}. Cohn was able to prove that $\kk\Langle X^{(1)} \mpsymbol X^{(2)}\Rangle$ has a universal field of fractions in \cite[Theorem 3.1]{Co2}. His method  relied heavily on the condition $G=2$ and does not generalize to $\txx$ for $G>2$.

The main result of this paper is the following.

\begin{thm}\label{t:intro}
Let $G\in\N$ be arbitrary. Then $\txx$ admits a universal skew field of fractions.
\end{thm}

The proof of Theorem \ref{t:intro} consists of two parts: first we introduce the set of \emph{multipartite rational functions}, denoted $\mpr$, and prove it is a skew field containing the multipartite free algebra $\txx$ (Theorem \ref{t:div}). We then in Theorem \ref{t:univ} establish its universal property. The construction of $\mpr$ is inspired by the description of noncommutative rational functions \cite{HMV,KVV1} as the equivalence classes of formal rational expressions with respect to their evaluations on matrix tuples. In our setting, we consider multipartite evaluations of expressions; these are defined using Kronecker's tensor product of matrices, which models the commutativity relations among variables in a multipartite free algebra. The main intermediate step towards universality is a new construction of the universal skew field of fractions of $D\otimes \ppx$ for an arbitrary skew field $D$ based on matrix evaluations.

The paper is organized as follows. In Section \ref{sec2} we introduce the necessary terminology and the notion of multipartite generic matrices. Section \ref{sec3} starts with the definition of multipartite rational functions and in Theorem \ref{t:div} we prove that $\mpr$ is a skew field of fractions of $\txx$; some traits of its internal structure are described in Subsection \ref{subsec32}.  In the first part of Section \ref{sec4} we develop auxiliary results that lead to Corollary \ref{c:rz}, a new characterization of the universal skew field of fractions for the tensor product of a skew field and a free algebra. The main result of the paper is Theorem \ref{t:univ}, where we prove the universality of $\mpr$. The proof uses generic matrices and PI-theory techniques as well as Cohn's results on localization of (semi)firs; the key connecting element between these methods is the block structure of multipartite evaluations. Akin to Amitsur's theorem \cite[Theorem 16]{Ami}, Theorem \ref{t:sci} shows that a rational expression vanishes on all tuples of matrices over $\kk$ satisfying commutation relations imposed by the multipartite free algebra if and only if it vanishes on all tuples over skew fields satisfying the same commutation relations.

In Section \ref{sec5} we place multipartite rational functions in the context of free function theory \cite{Voi0,KVV3,HKM}, where they play the role of higher order noncommutative rational functions in the sense of \cite[Chapter 3]{KVV3}. In Subsection \ref{subsec52} we briefly describe their difference-differential calculus, while in Subsection \ref{subsec53} we discuss their matrix coefficient realizations. Given a multipartite rational function $\rrb$, its minimal size realization can be regarded as a normal form for $\rrb$. Lastly, in Appendix \ref{appA} we provide a matrix model for the skew field of \emph{bi-free rational functions}, a notion that is motivated by recent progress in free probability \cite{Voi1} and is closely related to multipartite rational functions championed in this article.

\subsection*{Acknowledgments}

The authors thank Roland Speicher for drawing the free probability aspect of multipartite rational functions to their attention.

\linespread{1.05}

\section{Preliminaries}\label{sec2}

In this section we gather some preliminaries and background that will be used throughout the paper. This includes some notions from skew fields \cite{Co1,Co3} and the theory of polynomial identities \cite{Row}.

\subsection{Notation and terminology}\label{subsec21}

Throughout the paper let $\kk$ be a fixed (commutative) field of characteristic 0. We assume that all rings have a multiplicative identity and that the latter is preserved under ring homomorphisms. The tensor product over a ring $R$ is denoted $\otimes_R$.

\begin{defn}[{\cite[Section 4.1]{Co1} or \cite[Section 7.2]{Co3}}]
If $F$ and $E$ are skew fields, then a {\bf local homomorphism} (or a {\bf subhomomorphism}) $\lambda:F\dashrightarrow E$ is given by a ring homomorphism $F_0\to E$, whose domain $F_0\subseteq F$ is a local subring and in whose kernel are precisely the elements that are not invertible in $F_0$.
\end{defn}

\begin{defn}[{\cite[Section 7.2]{Co3}}]\label{d:sff}
Let $R$ be a ring. A skew field $\cU$ is a {\bf skew field of fractions} of $R$ if there is an embedding $R\hookrightarrow \cU$ and its image generates $\cU$ as a skew field. If furthermore every homomorphism $R\to D$, where $D$ is a skew field, extends to a local homomorphism $\cU\dashrightarrow D$, whose domain contains $R$, then $\cU$ is called the {\bf universal skew field of fractions} of $R$.
\end{defn}

\begin{rem}\label{r:another}\mbox{}\par\vspace{-2mm}
\begin{enumerate}
\item The universal skew field of fractions is, when it exists, unique up to isomorphism \cite[Section 7.2]{Co3}.
\item An alternative characterization of the universal skew field of fractions $\cU$ of $R$ is as follows (see \cite[Theorem 7.2.7]{Co3}): every matrix over $R$, which becomes invertible under some homomorphism from $R$ to a skew field, is invertible over $\cU$.
\end{enumerate}
\end{rem}

Let $G\in\N$ and $n_i\in\N$ for $1\le i\le G$. Recall that
\begin{equation}\label{e:can}
\bkotimes{i=1}{G} \mm{n_i} \to \mm{n_1\cdots n_G},\qquad \bkotimes{i=1}{G} a_i\mapsto \bigotimes_{i=1}^G a_i,
\end{equation}
 where $\otimes$ is the Kronecker product of matrices, is an isomorphism of $\kk$-algebras. Furthermore, for every permutation $\pi$ of the set $\{1,\dots, G\}$ there exists a permutation matrix $K_{\pi;\seqn}\in \GL_{n_1\cdots n_G}(\kk)$, called the {\bf commutation matrix}, such that
\begin{equation}\label{e:tenspi}
\bkotimes{i=1}{G} a_{\pi(i)}=K_{\pi;\seqn}\left(\bkotimes{i=1}{G} a_i\right)K_{\pi;\seqn}^t
\end{equation}
for all $a_i\in\mm{n_i}$. Let $\tau_i: \mm{n_i}\hookrightarrow \mm{n_1\cdots n_G}$ denote the embedding corresponding to the isomorphism \eqref{e:can}, i.e.,
\begin{equation}\label{e:tau0}
\tau_i(a)=I_{n_1}\otimes\cdots\otimes a\otimes \cdots\otimes I_{n_G}.
\end{equation}

For $1\le i\le G$ let $X^{(i)}=\{X_1^{(i)},\dots, X_{g_i}^{(i)}\}$ be sets of freely noncommuting variables and set $X=\bigcup_i X^{(i)}$. The principal object of this paper is the {\bf multipartite free $\kk$-algebra}
\begin{align*}
\kk \Langle X^{(1)} \mpsymbol X^{(2)} \mpsymbol\cdots\mpsymbol X^{(G)} \Rangle 
\eqcolon&\ \px{1} \kotimes \px{2} \kotimes\cdots\kotimes \px{G} \\
\cong&\ \ppx \big/ \left([X_{j_1}^{(i_1)},X_{j_2}^{(i_2)}] \colon i_1\neq i_2,\ j_1,j_2 \right).
\end{align*}
Let
$$\kk[\zeta^{(i)}]=k[\zeta_{j\iimath\jjmath}^{(i)}:1\le j\le g_i, 1\le \iimath,\jjmath \le n_i],\qquad \kk[\zeta]=\bkotimes{i=1}{G}\kk[\zeta^{(i)}];$$
corresponding fields of fractions are $\kk(\zeta^{(i)})$ and $\kk(\zeta)$, respectively. Clearly, the map
\begin{equation}\label{e:tau1}
\bkotimes{i=1}{G} \Mat_{n_i}(\kk(\zeta^{(i)}))\to \Mat_{n_1\cdots n_G}(\kk(\zeta)), \qquad
\bkotimes{i=1}{G} c_i\mapsto \bigotimes_{i=1}^G c_i
\end{equation}
is an embedding. For $u\in \Mat_n(\kk(\xi_1,\dots, \xi_m))$ let $\dom u\subseteq \kk^{m n^2}$ denote the intersection of domains of its entries.

Furthermore, let $\gm{n_i}(x^{(i)})$ be the {\bf algebra of generic matrices}, i.e., the unital $\kk$-subalgebra of $\Mat_{n_i}(\kk[\zeta^{(i)}])$ generated by $g_i$ matrices $x_j^{(i)}=(\zeta_{j\iimath\jjmath}^{(i)})_{\iimath\jjmath}$ of size $n_i$. We refer to \cite[Section 1.3]{Row} for its role in the theory of polynomial identities; also see \cite{Pro} for a more geometric interpretation. Lastly, let
$$\gm{\seqn}(x)=\bkotimes{i=1}{G}\gm{n_i}(x^{(i)}) \hookrightarrow \Mat_{n_1\cdots n_G}(\kk[\zeta])$$
be the {\bf algebra of multipartite generic matrices}, where the last map is the restriction of the embedding \eqref{e:tau1}. Images of the generic matrices $x^{(i)}_j$ in $\gm{\seqn}(x)$ are called {\bf multipartite generic matrices.}

Finally, for the sake of simplicity we adopt the following phrasing conventions. Let $r$ be a mapping that is defined on $\cS_0\subseteq\cS$, not defined on $\cS\setminus\cS_0$ and has 0 in its codomain; if $\cS_0\neq \emptyset$ and $r|_{S_0}=0$, then we say that $r$ {\bf vanishes} on $\cS$. If $\cV$ is an affine variety over $\kk$ and every point in a non-empty Zariski-open subset of $\cV$ satisfies a property $P$, then we say that {\bf almost every} point of $\cV$ satisfies $P$. Normally one would say that $P$ is generically true on $\cV$, but to avoid confusion with generic matrices, which are frequently used in this paper, we prefer to adapt the non-standard notion.

\subsection{Local results}\label{subsec22}

The following proposition describes the universal property of the $\kk$-algebra $\gm{\seqn}(x)$ with respect to central simple algebras. Let $\cA_i$ be unital $\kk$-algebras with common central subfield $C\supseteq \kk$. A {\bf tensor evaluation} in $\bCotimesi_i\cA_i$ is a homomorphism $\varphi:\txx\to \bCotimesi_i\cA_i$ satisfying $\varphi(\px{i})\subseteq \cA_i$.

\begin{prop}\label{p:tens}
Let $\cA_i$ be simple algebras of degrees $n_i$ with common center $C\supseteq \kk$. Let $p\in\txx$ be arbitrary. Then $p$ vanishes under every tensor evaluation in $\bCotimesi_i\cA_i$ if and only if the canonical image of $p$ in $\gm{\seqn}(x)$ is zero.
\end{prop}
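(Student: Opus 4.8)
The plan is to reduce every tensor evaluation, up to an injective map, to a scalar substitution in the single ``generic'' tensor evaluation defining $\gm{\seqn}(x)$. Indeed, the canonical map $\txx\to\gm{\seqn}(x)$ composed with the embedding $\gm{\seqn}(x)\hookrightarrow\Mat_{n_1\cdots n_G}(\kk(\zeta))$ from \eqref{e:tau1} is itself a tensor evaluation: the algebras $\Mat_{n_i}(\kk(\zeta))$ are simple of degree $n_i$ with common center $\kk(\zeta)\supseteq\kk$, the generic matrices $x^{(i)}_j$ have entries in $\kk[\zeta^{(i)}]\subseteq\kk(\zeta)$, and the composite lands in $\bigotimes_i\Mat_{n_i}(\kk(\zeta))$, the tensor product taken over $\kk(\zeta)$. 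Call this tensor evaluation $\varepsilon$. Since the embedding \eqref{e:tau1} is injective, $\varepsilon(p)=0$ precisely when the canonical image of $p$ in $\gm{\seqn}(x)$ is zero; this already settles the implication ``$p$ vanishes under every tensor evaluation, hence in particular under $\varepsilon$, so its image in $\gm{\seqn}(x)$ is zero''.

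For the converse, assume the image $\hat p\in\Mat_{n_1\cdots n_G}(\kk[\zeta])$ of $p$ is the zero matrix and let $\varphi\colon\txx\to\bCotimes_i\cA_i$ be an arbitrary tensor evaluation with each $\cA_i$ simple of degree $n_i$ over $C$. First I would pass to a splitting field: for an algebraic closure $\bC$ of $C$ one has $\cA_i\otimes_C\bC\cong\Mat_{n_i}(\bC)$, and since $\bC$ is free as a $C$-module the canonical map
\[\bCotimes_i\cA_i\hookrightarrow\bC\otimes_C\bigl(\bCotimes_i\cA_i\bigr)\cong\bbCotimes_i\bigl(\cA_i\otimes_C\bC\bigr)\cong\bbCotimes_i\Mat_{n_i}(\bC)\cong\Mat_{n_1\cdots n_G}(\bC)\]
is an injection respecting the $i$-th tensor factors; composing it with $\varphi$ gives a tensor evaluation $\psi$ at which $p$ vanishes if and only if it vanishes at $\varphi$. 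Now $\psi$ is encoded by matrices $a^{(i)}_j=\psi(X^{(i)}_j)\in\Mat_{n_i}(\bC)$, and the $\kk$-algebra homomorphism $\epsilon\colon\kk[\zeta]\to\bC$ defined by $\zeta_{j\imath\jmath}^{(i)}\mapsto(a^{(i)}_j)_{\imath\jmath}$ carries the generic matrix $x^{(i)}_j$ entrywise to $a^{(i)}_j$; comparing the two ring homomorphisms on the generators $X^{(i)}_j$ then shows $\psi=\epsilon_*\circ\varepsilon$, where $\epsilon_*$ denotes $\epsilon$ applied entrywise to $\Mat_{n_1\cdots n_G}(\kk[\zeta])$ and $\varepsilon$ is regarded as landing in $\Mat_{n_1\cdots n_G}(\kk[\zeta])$. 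Hence $\psi(p)=\epsilon_*(\hat p)=0$, and therefore $\varphi(p)=0$, as wanted.

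The proof is mostly bookkeeping and I do not anticipate a genuine obstacle. The one point that needs care is verifying that the Kronecker-product identifications \eqref{e:can}--\eqref{e:tau1} are compatible with the base change $\bC\otimes_C(-)$ and with the entrywise substitution $\epsilon_*$, so that ``sitting in the $i$-th tensor slot'' is preserved by every map occurring in the argument; granting this, both implications follow.
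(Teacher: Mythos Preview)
Your proof of the implication ``image in $\gm{\seqn}(x)$ is zero $\Rightarrow$ $p$ vanishes under every tensor evaluation in $\bCotimes_i\cA_i$'' is correct and matches the paper's approach: both pass to the algebraic closure $\bC$, split each $\cA_i$ as $\Mat_{n_i}(\bC)$, and then recognise the evaluation at $a^{(i)}_j$ as a scalar specialisation of the generic one.

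However, your argument for the other direction has a genuine gap. The algebras $\cA_i$ and the center $C$ are \emph{fixed} in the statement; the hypothesis is only that $p$ vanishes under every tensor evaluation into this particular $\bCotimes_i\cA_i$. Your $\varepsilon$ is a tensor evaluation into $\bigotimes_i\Mat_{n_i}(\kk(\zeta))$ over $\kk(\zeta)$, which is \emph{not} the given $(\cA_i,C)$, so the hypothesis says nothing about $\varepsilon(p)$. (That this direction is the substantive one is confirmed by how the proposition is used in the proof of Proposition~\ref{p:ud}, where one needs that $p\neq 0$ in $\gm{\seqn}(x)$ forces $p$ not to vanish identically on a \emph{prescribed} family of cyclic algebras.)

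The missing step is precisely what the paper supplies: one shows that vanishing on $\prod_i\cA_i^{g_i}$ forces vanishing on all of $\prod_i\Mat_{n_i}(\bC)^{g_i}$. Since $\bC\otimes_C\cA_i\cong\Mat_{n_i}(\bC)$, a $C$-basis of $\cA_i$ is a $\bC$-basis of $\Mat_{n_i}(\bC)$; expanding $p$ (which has coefficients in $\kk\subseteq C$) in these bases gives a polynomial over $C$ in the basis coordinates that vanishes on all of $C^N$, hence is identically zero and so vanishes on $\bC^N$ as well. Once $p$ vanishes on all matrix tuples over the infinite field $\bC$, the image in $\gm{\seqn}(x)$ is zero. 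Your specialisation argument $\psi=\epsilon_*\circ\varepsilon$ can then be read in reverse to finish, but the Zariski-density step from $\cA_i$ to $\Mat_{n_i}(\bC)$ cannot be skipped.
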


\begin{proof}
For every tensor evaluation in $\bCotimesi_i\cA_i$ we have a sequence of homomorphisms
$$\gm{n_1,\dots,n_G}(x)\to \bkotimes{i=1}{G} \cA_i
\to \bCotimes{i=1}{G}\cA_i \to \bbCotimes{i=1}{G}(\bC\Cotimes\cA_i)
\to \bbCotimes{i=1}{G}\Mat_{n_i}(\bC),$$
where $\bC$ is the algebraic closure of $C$. The first homomorphism exists since $\gm{n_i}(x^{(i)})$ are relatively free (\cite[Lemma 14.1]{Sa} or \cite[Proposition 1.3.9, Theorem 1.3.11]{Row}) and the last homomorphism (in fact an isomorphism) simply states that $\bC$ is a splitting field for $\cA_i$. Note that all homomorphisms are either surjective or they correspond to central extensions. Therefore if $p$ vanishes in $\gm{\seqn}(x)$, it vanishes under every tensor evaluation in $\bCotimesi_i\cA_i$; and if $p$ vanishes under every tensor evaluation in $\bCotimesi_i\cA_i$, it vanishes under every tensor evaluation in $\bbCotimesi_i\Mat_{n_i}(\bC)$. Because $\bC$ is infinite, the canonical image of $p$ in $\gm{\seqn}(x)$ is zero if and only if $p$ vanishes under every tensor evaluation in $\bbCotimesi_i\Mat_{n_i}(\bC)$, hence the statement is proved.
\end{proof}

\begin{lem}\label{l:prime}
$\gm{\seqn}(x)$ is a prime ring.
\end{lem}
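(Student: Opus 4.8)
The plan is to realise $\gm{\seqn}(x)$ as a subring of a simple Artinian ring reached from it by two successive central localizations, and to use that primeness is reflected along such localizations. Recall first the structure of a single generic matrix algebra: $\gm{n_i}(x^{(i)})\subseteq\Mat_{n_i}(\kk[\zeta^{(i)}])$ is a prime PI $\kk$-algebra, indeed a domain; write $Z_i$ for its centre (a domain, being the centre of a prime ring), $C_i=\operatorname{Frac}(Z_i)$, and $\cD_i=(Z_i\setminus\{0\})^{-1}\gm{n_i}(x^{(i)})$ for its central localization. By Posner's theorem (see e.g.\ \cite{Row}), $\cD_i$ is a central simple algebra over the field $C_i=Z(\cD_i)$, finite-dimensional over $C_i$; concretely it is the generic division algebra when $g_i,n_i\ge2$, a rational function field $\kk(t)$ when $g_i=1$, and $\kk(\zeta^{(i)})$ when $n_i=1$. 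In each case $\cD_i$ is a unital subalgebra of $\Mat_{n_i}(\kk(\zeta^{(i)}))$, so every $z\in Z_i\setminus\{0\}$ — being a unit of $\cD_i$ — is an invertible matrix, whence $\det z\neq0$ in $\kk[\zeta^{(i)}]$.

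The crux is the claim that $F_0:=C_1\kotimes\cdots\kotimes C_G$ is a domain. Since $\operatorname{char}\kk=0$ each $C_i$ is separable over $\kk$; and $\kk$ is algebraically closed in $C_i$ because $C_i$ embeds into a purely transcendental extension of $\kk$ — into $\kk(\zeta^{(i)})$ (as scalar matrices) when $n_i,g_i\ge2$, into $\kk(t)$ when $g_i=1$, while $C_i=\kk(\zeta^{(i)})$ when $n_i=1$; equivalently, extending scalars to $\bar\kk$ identifies $Z_i\kotimes\bar\kk$ with the centre of the algebra of generic matrices over $\bar\kk$, which is a domain. Hence each $C_i/\kk$ is a regular field extension, and since a regular extension remains a domain after $\kotimes$ with an arbitrary field extension, an induction gives that $F_0$ is a domain. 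Set $F=\operatorname{Frac}(F_0)$.

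Put $Q=\bkotimes_{i=1}^G\cD_i$. Using $Z(A\kotimes B)=Z(A)\kotimes Z(B)$ over the field $\kk$, its centre is $\bkotimes_iZ(\cD_i)=F_0$; and $Q$ is a free $F_0$-module, being a tensor product of the free $C_i$-modules $\cD_i$, hence $F_0$-torsion-free. Therefore $Q$ embeds into
$$\tilde Q:=Q\otimes_{F_0}F\;\cong\;(\cD_1\otimes_{C_1}F)\otimes_F\cdots\otimes_F(\cD_G\otimes_{C_G}F),$$
which is a tensor product over the field $F$ of central simple $F$-algebras (scalar extensions of the $\cD_i$), hence a central simple $F$-algebra, in particular simple Artinian, a fortiori prime. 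Since $\tilde Q$ is the localization of $Q$ at the central non-zero-divisors $F_0\setminus\{0\}$, primeness is inherited: if $0\neq a,b\in Q$ had $aQb=0$, then clearing central denominators yields $a\tilde Qb=0$ with $a,b\neq0$ in $\tilde Q$, contradicting primeness of $\tilde Q$. Thus $Q$ is prime.

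Finally, localization commutes with $\kotimes$, so $Q=\bkotimes_i(Z_i\setminus\{0\})^{-1}\gm{n_i}(x^{(i)})=S^{-1}\gm{\seqn}(x)$, where $S$ is the central multiplicative subset of $\gm{\seqn}(x)$ generated by $\bigcup_i(Z_i\setminus\{0\})$. Under $\gm{\seqn}(x)\hookrightarrow\Mat_{n_1\cdots n_G}(\kk[\zeta])$ an element $z\in Z_i\setminus\{0\}$ acts as $I_{n_1}\otimes\cdots\otimes z\otimes\cdots\otimes I_{n_G}$, of determinant $(\det z)^{n_1\cdots n_G/n_i}\neq0$ in the domain $\kk[\zeta]$; so $S$ consists of non-zero-divisors and $\gm{\seqn}(x)\hookrightarrow S^{-1}\gm{\seqn}(x)=Q$. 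Repeating the clearing-of-denominators argument, primeness of $Q$ forces primeness of $\gm{\seqn}(x)$. The step I expect to be the real obstacle is showing that $F_0$ is a domain: this is exactly where the multipartite, variable-disjoint structure is used, and it fails for tensor products of arbitrary prime PI algebras; the remainder — reflecting primeness along the two central localizations and checking that the localizing sets are regular — is routine by comparison.
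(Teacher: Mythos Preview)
Your proof is correct and takes a genuinely different route from the paper's. The paper argues directly that the full matrix algebra $\Mat_{n_1\cdots n_H}(\kk(t,\zeta'))$ is a \emph{central extension} of $\gm{\seqn}(x)$ (after splitting off the commutative factors $g_i=1$): it picks a basis $S_i\subset\gm{n_i}(x^{(i)})$ of $\Mat_{n_i}(\kk(\zeta^{(i)}))$ over its center, argues that the tensor basis $S=\{\bigotimes_i s_i\}$ is $\kk$-linearly independent under almost every evaluation and hence $\kk(t,\zeta')$-independent, and then counts $|S|=(n_1\cdots n_H)^2$ to conclude. Primeness then follows from \cite[Remark 1.9.5]{Row}. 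This is a short dimension-count argument that never mentions Posner's theorem, regularity of field extensions, or central simple algebras explicitly.

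Your approach is more structural: you invoke Posner to pass to the division algebras $\cD_i$, identify the obstruction as whether $F_0=\bigotimes_i C_i$ is a domain, and settle that via regularity of $C_i/\kk$ (which you correctly deduce from $C_i\hookrightarrow\kk(\zeta^{(i)})$). You then localize once more to reach a central simple $F$-algebra and reflect primeness back twice. This buys a clearer explanation of \emph{where} the multipartite hypothesis enters (the disjointness of variables makes $F_0$ a domain), and in effect your $\tilde Q$ is precisely the ring $\udn$ that the paper introduces immediately afterwards --- so your argument simultaneously establishes its existence. The paper's argument is shorter and more elementary, but yours isolates the algebraic content more transparently. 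Incidentally, the regularity step admits an even quicker justification in this setting: since $C_i\hookrightarrow\kk(\zeta^{(i)})$ and the $\zeta^{(i)}$ are disjoint, $F_0\hookrightarrow\bigotimes_i\kk(\zeta^{(i)})\hookrightarrow\kk(\zeta)$ directly.
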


\begin{proof}
Let $0\le H\le G$ and without loss of generality assume $g_i>1$ for $i\le H$ and $g_i=1$ for $i> H$. Firstly, it is clear that $\gm{n_i}(x^{(i)})\cong \kk[t_i]$ for $i>H$, where $t_i$ is an auxiliary symbol. Hence we have
$$\gm{\seqn}(x)\cong \kk[t_{H+1},\dots,t_G]\kotimes 
\bkotimes{i=1}{H} \gm{n_i}(x^{(i)}) \hookrightarrow
\Mat_{n_1\cdots n_{H}}(\kk(t,\zeta')),
$$
where $t=\{t_{H+1},\dots,t_G\}$ and $\zeta'=\bigcup_{i=1}^{H}\zeta^{(i)}$. By \cite[Remark 1.9.5]{Row} it is enough to prove that $\Mat_{n_1\cdots n_{H}}(\kk(t,\zeta'))$ is a central extension of $\gm{n_1,\dots,n_H}(x)$.

By \cite[Proposition 2.4.11]{Row}, $\Mat_{n_i}(\kk(\zeta^{(i)}))$ is a central extension of $\gm{n_i}(x^{(i)})$ for every $i\le H$, so there exists a basis $S_i\subset \gm{n_i}(x^{(i)})$ of $\Mat_{n_i}(\kk(\zeta^{(i)}))$ as an algebra over its center. Since $S_i$ is $\kk$-linearly independent under almost every evaluation, the set
$$S=\left\{\bkotimes{i=1}{H} s_i\colon s_i\in S_i\right\}$$
is also $\kk$-linearly independent under almost every evaluation. Therefore it is $\kk(t,\zeta')$-linearly independent in $\Mat_{n_1\cdots n_{H}}(\kk(t,\zeta'))$. Finally, $S$ is a basis for $\Mat_{n_1\cdots n_{H}}(\kk(t,\zeta'))$ since $|S|=n_1^2\cdots n_H^2$.
\end{proof}

Since $\gm{\seqn}(x)$ is a prime $\kk$-algebra, its center is an integral domain. Let $\udn$ be the ring of central quotients of $\gm{\seqn}(x)$; that is, $\udn$ is the localization of $\gm{\seqn}(x)$ at its nonzero central elements. Because the inclusion $\gm{n_i}(x^{(i)})\hookrightarrow \gm{\seqn}(x)$ preserves central elements, the map \eqref{e:tau1} restricts to an embedding
$$\bkotimes{i=1}{G} \ud{n_i}(x^{(i)})\hookrightarrow \udn.$$
Since $\gm{\seqn}(x)$ is a prime PI-ring, $\udn$ is a simple algebra of finite degree by \cite[Theorem 1.7.9]{Row}. This leads to the next proposition, which generalizes the $G=1$ case \cite[Theorem 3.2.6]{Row} (cf.~\cite[Proposition 2.1]{KVV2}).

\begin{prop}\label{p:ud}
$\udn$ is a skew field.
\end{prop}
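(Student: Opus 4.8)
The plan is to reduce everything to the single statement that the multipartite generic matrix ring $\gm{\seqn}(x)$ is a \emph{domain}. Granting this, the proposition follows at once: $\udn$ is obtained from $\gm{\seqn}(x)$ by inverting the nonzero elements of its centre, and since $\gm{\seqn}(x)$ is a domain these are central non--zero-divisors, so the central localization $\udn$ is again a domain; on the other hand, $\gm{\seqn}(x)$ is a prime PI-ring (Lemma \ref{l:prime}), so $\udn$ is a simple algebra, finite-dimensional over its centre, in particular Artinian, and an Artinian domain is a skew field.

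To prove that $\gm{\seqn}(x)$ is a domain I would exhibit a single ``sufficiently generic'' evaluation taking values in a skew field. The one input that is not purely formal is the following classical fact from division algebra theory: there exist a field $C\supseteq\kk$ and central division $C$-algebras $\cA_i$ with $\deg\cA_i=n_i$ ($1\le i\le G$) whose tensor product $T\eqcolon\bCotimes_{i=1}^G\cA_i$ is again a division algebra (of degree $n_1\cdots n_G$). For instance one may take $C$ to be an iterated Laurent series field over a suitable cyclotomic extension of $\kk$ and let $\cA_i$ be symbol algebras whose defining symbols use independent blocks of the iterated valuation, so that $T$ is tame and totally ramified, hence a division algebra. (This is the step whose precise statement and reference I would want to pin down carefully; everything after it is soft.) Fix such $C$, $\cA_i$ and $T$.

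Now let $p,q\in\gm{\seqn}(x)$ be nonzero; the goal is $pq\neq 0$. A tensor evaluation $\varphi\colon\txx\to T$ with $\varphi(\px i)\subseteq\cA_i$ is determined by the tuple $\uu=(u_j^{(i)})$ of images of the variables $X_j^{(i)}\in\cA_i$, so such evaluations are parametrized by the points $\uu$ of the affine $C$-space $\cV\eqcolon\prod_i\cA_i^{g_i}$; write $\varphi_\uu$ for the one attached to $\uu$. By Proposition \ref{p:tens} every $\varphi_\uu$ kills the kernel of $\txx\to\gm{\seqn}(x)$, hence descends to a homomorphism $\gm{\seqn}(x)\to T$ still denoted $\varphi_\uu$; and for fixed $r\in\gm{\seqn}(x)$ the map $\uu\mapsto\varphi_\uu(r)\in T$ is polynomial, since multiplication in $T$ is $C$-bilinear. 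Therefore $\cV_r\eqcolon\{\uu\in\cV:\varphi_\uu(r)\neq 0\}$ is Zariski-open in $\cV$, and by Proposition \ref{p:tens} it is nonempty exactly when $r\neq 0$ in $\gm{\seqn}(x)$. Thus $\cV_p$ and $\cV_q$ are nonempty open subsets of the irreducible variety $\cV$ over the infinite field $C$, so $\cV_p\cap\cV_q\neq\emptyset$; picking $\uu$ there, $\varphi_\uu(p)$ and $\varphi_\uu(q)$ are nonzero elements of the division ring $T$, whence $\varphi_\uu(pq)=\varphi_\uu(p)\varphi_\uu(q)\neq 0$ and so $pq\neq 0$. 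Hence $\gm{\seqn}(x)$ is a domain, completing the argument.

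In short, Proposition \ref{p:tens} is used to translate ``nonzero in $\gm{\seqn}(x)$'' into ``generically nonzero on the parameter space $\cV$'', irreducibility of $\cV$ lets two such generic conditions be met simultaneously, and the division-algebra target $T$ converts this into the no-zero-divisors conclusion. The main obstacle, and the only genuinely non-formal ingredient, is the construction of central division algebras of prescribed degrees over a common field whose tensor product stays a division algebra.
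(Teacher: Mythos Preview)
Your proof is correct and rests on the same two ingredients as the paper's: Proposition \ref{p:tens} and the existence of central simple $C$-algebras $\cA_i$ of the prescribed degrees whose tensor product over $C$ is a division algebra (the paper cites \cite[Proposition 1.1]{Sa} for this, using cyclic algebras). The only difference is in the packaging. The paper argues by contradiction with a single nilpotent: if $\udn$ were not a skew field it would, being simple Artinian, contain a nonzero nilpotent, which after clearing central denominators gives a nonzero $p\in\gm{\seqn}(x)$ with $p^2=0$; Proposition \ref{p:tens} then produces a point $u$ in the division algebra $T$ with $p(u)\neq 0$ and $p(u)^2=0$, a contradiction. This sidesteps your Zariski density argument for intersecting $\cV_p$ and $\cV_q$, since only one nonvanishing condition is needed. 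Your version has the minor advantage of explicitly establishing the stronger statement that $\gm{\seqn}(x)$ itself is a domain; the paper's has the advantage of requiring only a single generic nonvanishing rather than two simultaneous ones.
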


\begin{proof}
Suppose $\udn$ is not a skew field. Then it contains nilpotents, so there exists a nonzero $p\in \gm{\seqn}(x)$ such that $p^2=0$. Therefore for every $\cA_i$ and $C$ as in Proposition \ref{p:tens} there exist $u^{(i)}_1,\dots,u^{(i)}_{g_i}\in \cA_i$ such that $p(u)\neq0$ and $p(u)^2=0$ in 
$\bCotimesi_i\cA_i$. By \cite[Proposition 1.1 and the preceding paragraph]{Sa}, we can find (cyclic) algebras 
$\cA_i$ such that $\bCotimesi_i\cA_i$ is a (crossed product) division algebra, which leads to a contradiction.
\end{proof}

\section{Skew field of multipartite rational functions}\label{sec3}

In this section we introduce multipartite rational functions using evaluations in matrix algebras of arbitrary size, and give some of their basic properties.

Set $\ug=(g_1,\dots,g_G)$ and let
$$\cM_{\seqn}^{\ug}=\prod_{i=1}^G \mm{n_i}^{g_i},\qquad \cM^{\ug}=\bigcup_{\seqn} \cM_{\seqn}^{\ug}.$$
Define $\tau:\cM^{\ug}\to\cM^{\ug}$ by
\begin{equation}\label{e:tau}
\tau(a^{(1)},\dots,a^{(G)})=(\tau_1(a^{(1)}),\dots, \tau_G(a^{(G)})),
\end{equation}
where $\tau_i$ are defined by \eqref{e:tau0}. Let $\re$ be the set of noncommutative rational expressions over $\kk$, i.e., all possible syntactically valid combinations of elements in $\kk$ and $X$, arithmetic operations (addition, multiplication, inversion) and parentheses. {\bf The inversion height} of $r\in\re$ is the maximum number of nested inverses in $r$. We can attempt to evaluate nc rational expressions on tuples of square matrices of the same size; such an evaluation will be occasionally  called {\bf nc-evaluation} to distinguish it from other types of evaluations. The set of all tuples of matrices (of size $n$), at which $r$ is defined, is denoted $\dom r$ (resp. $\dom_n r$) and called the {\bf domain} of $r$.

\begin{defn}
If $r\in \re$ is defined at $\tau(a)$ for $a\in \cM^{\ug}$, then we say that $r$ is {\bf mp-defined} at $a\in \cM^{\ug}$ and write $r(a)^{\mps}=r(\tau(a))$. An expression $r\in\re$ is {\bf mp-nondegenerate} if it is mp-defined somewhere on $\cM^{\ug}$. Its {\bf mp-domain} in $\cM_{\seqn}^{\ug}$ (resp. $\cM^{\ug}$) is denoted $\mdom[\seqn]r$ (resp. $\mdom r$).
\end{defn}

Note that mp-domains are Zariski-open sets and $\mdom[\seqn]r\subseteq \dom r(x)$, where $x$ is the tuple of multipartite generic matrices from $\gm{\seqn}(x)$. Basic properties of mp-evaluations are summarized in the following proposition (cf.~Subsection \ref{subsec51}).

\begin{prop}\label{p:basic}
Let $r\in\re$.
\begin{enumerate}[\rm(1)]
\item If $(a',a^{(2)},\dots,a^{(G)})\in\mdom[n_1',n_2,\dots,n_G]r$ and $(a'',a^{(2)},\dots,a^{(G)})\in\mdom[n_1'',n_2,\dots,n_G]r$, then 
$(a'\oplus a'',a^{(2)},\dots,a^{(G)}) \in \mdom[n_1'+n_1'',n_2,\dots,n_G] r$ and
$$r(a'\oplus a'',a^{(2)},\dots,a^{(G)})^{\mps}=
r(a',a^{(2)},\dots,a^{(G)})^{\mps}\oplus r(a'',a^{(2)},\dots,a^{(G)})^{\mps}.$$
\item If $(a^{(1)},\dots,a^{(G)})\in\mdom[\seqn]r$, then $(p_1a^{(1)}p_1^{-1},\dots, p_Ga^{(G)}p_G^{-1})\in\mdom[\seqn] r$ for all $p_i\in\GL_{n_i}(\kk)$ and
$$r(p_1a^{(1)}p_1^{-1},\dots, p_Ga^{(G)}p_G^{-1})^{\mps}=\left(\bigotimes_i p_i\right)r(a^{(1)},\dots,a^{(G)})^{\mps}\left(\bigotimes_i p_i\right)^{-1}.$$
\end{enumerate}
\end{prop}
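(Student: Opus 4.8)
The plan is to prove both statements by structural induction on the rational expression $r\in\re$, tracking the block structure that multipartite evaluations impose through the Kronecker product. The crucial observation is that $\tau_i$ is a $\kk$-algebra embedding, so for a fixed tuple $(a^{(2)},\dots,a^{(G)})$ the map $a'^{(1)}\mapsto r(\tau(a'^{(1)},a^{(2)},\dots,a^{(G)}))$ is governed entirely by how the single slot in the first coordinate behaves, while the other coordinates act as fixed matrices that commute with everything coming from the first slot. Both claims are really statements about how nc-evaluation interacts with (1) direct sums in one coordinate and (2) simultaneous conjugation in each coordinate, after passing through $\tau$.

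For part (1), I would first record the key identity: under $\tau$, putting $a'\oplus a''$ in the first coordinate yields, up to a fixed permutation matrix $P$ (a commutation matrix from \eqref{e:tenspi}, or simply the shuffle that regroups $(\kk^{n_1'}\oplus\kk^{n_1''})\otimes\kk^{n_2}\otimes\cdots$ as $(\kk^{n_1'}\otimes\kk^{n_2}\otimes\cdots)\oplus(\kk^{n_1''}\otimes\kk^{n_2}\otimes\cdots)$), the block-diagonal matrix $\tau_i(a^{(i)})'\oplus\tau_i(a^{(i)})''$ for $i\ge 2$ and $\tau_1(a')\oplus\tau_1(a'')$ for $i=1$, where the primed/doubleprimed $\tau$'s denote the embeddings into the smaller products. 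Since $P$ is independent of $r$, and since nc-evaluation commutes with conjugation by any fixed invertible matrix and with the block-diagonal structure (a block-diagonal matrix is invertible iff each block is, and its inverse is the block-diagonal of inverses), an induction on the formation of $r$ shows that $r(\tau(a'\oplus a'',\dots))$ is defined exactly when both $r(\tau(a',\dots))$ and $r(\tau(a'',\dots))$ are defined, in which case it equals $P\bigl(r(\tau(a',\dots))\oplus r(\tau(a'',\dots))\bigr)P^\T$; peeling off $P$ and $P^\T$ gives the stated direct-sum formula. The base cases (constants in $\kk$, variables in $X^{(i)}$) are immediate from the definitions of $\tau_i$ and the Kronecker product; the inductive steps for $+$ and $\cdot$ are routine, and the step for inversion uses that a block-diagonal matrix is invertible iff all blocks are.

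For part (2) the mechanism is the same but cleaner, since no reshuffling is needed. Conjugating $a^{(i)}$ by $p_i\in\GL_{n_i}(\kk)$ and then applying $\tau_i$ gives $\tau_i(p_ia^{(i)}p_i^{-1})=\bigl(\bigotimes_j p_j\bigr)\tau_i(a^{(i)})\bigl(\bigotimes_j p_j\bigr)^{-1}$, because $\tau_i(p_ia^{(i)}p_i^{-1})=I\otimes\cdots\otimes p_ia^{(i)}p_i^{-1}\otimes\cdots\otimes I=(I\otimes\cdots\otimes p_i\otimes\cdots\otimes I)\tau_i(a^{(i)})(I\otimes\cdots\otimes p_i^{-1}\otimes\cdots\otimes I)$, and $\bigotimes_j p_j=\prod_i(I\otimes\cdots\otimes p_i\otimes\cdots\otimes I)$ with the factors commuting. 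So $\tau$ of the conjugated tuple is the $\bigl(\bigotimes_i p_i\bigr)$-conjugate of $\tau$ of the original tuple, coordinatewise and uniformly. Now induction on $r$: nc-evaluation of any rational expression is equivariant under simultaneous conjugation of the whole tuple by a single fixed invertible matrix $Q=\bigotimes_i p_i$ — this is standard and proved by the obvious induction ($Q(u+v)Q^{-1}=QuQ^{-1}+QvQ^{-1}$, $Q(uv)Q^{-1}=(QuQ^{-1})(QvQ^{-1})$, $(QuQ^{-1})^{-1}=Qu^{-1}Q^{-1}$, and in particular invertibility is preserved). Applying this with $Q=\bigotimes_i p_i$ yields exactly the claimed domain invariance and conjugation formula.

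The only mildly delicate point — the part I would be most careful about — is the bookkeeping of the permutation matrix $P$ in part (1): one must check that the \emph{same} $P$ (and $P^\T$) works simultaneously for all five coordinates' embeddings and is inert under every arithmetic operation, so that it can be threaded through the entire induction and stripped off at the end. This is where the block structure of multipartite evaluations, emphasized in the introduction, does the real work; once the identity $\tau_i(a'\oplus a'')=P\bigl(\tau_i(a')'\oplus\tau_i(a'')''\bigr)P^\T$ (with $P$ independent of $i$) is established as a lemma, the rest is a mechanical structural induction. I would state that lemma explicitly before the induction and keep the inductive steps terse.
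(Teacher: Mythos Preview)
Your proof is correct and is precisely the detailed version of what the paper omits (the paper's proof reads simply ``Straightforward''). One simplification worth noting: the permutation matrix $P$ you introduce in part (1) is in fact the identity, because the direct sum occurs in the \emph{leftmost} Kronecker factor and with the standard convention one has $(A'\oplus A'')\otimes B=(A'\otimes B)\oplus(A''\otimes B)$ on the nose; hence $\tau_i(a'\oplus a'',a^{(2)},\dots)$ is already block-diagonal without any reshuffle, and the bookkeeping you flagged as ``mildly delicate'' is trivial. (Had the direct sum been in a later coordinate, your $P$ would genuinely be needed, which is exactly the content of the remark about canonical shuffles in Section~\ref{subsec51}.)
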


\begin{proof}
Straightforward.
\end{proof}

\begin{rem}\label{r:updown}
Proposition \ref{p:basic}(1) and \eqref{e:tenspi} imply
$$\mdom[\seqn]r\neq\emptyset \Rightarrow \mdom[k_1n_1,\dots,k_Gn_G]r\neq\emptyset$$
and
$$(\forall a\in\mdom[k_1n_1,\dots,k_Gn_G]r\colon r(a)^{\mps}=0) \Rightarrow 
(\forall a\in\mdom[\seqn]r\colon r(a)^{\mps}=0)$$
hold for all $n_1,\dots,n_G\in\N$ and $k_1,\dots,k_G\in\N$. These implications enable us to traverse ``up'' and ``down'' between the level sets $\cM_{\seqn}^{\ug}$ in $\cM^{\ug}$.
\end{rem}

\begin{lem}\label{l:det}
Let $r\in\re$. If $\det r$ mp-vanishes on $\cM^{\ug}_{\seqn}$, then $r$ mp-vanishes on $\cM^{\ug}_{\seqn}$.
\end{lem}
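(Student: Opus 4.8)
The plan is to push the statement into the skew field $\udn$. First I would unpack the hypothesis. By the vanishing convention, ``$\det r$ mp-vanishes on $\cM^{\ug}_{\seqn}$'' means that the set where $a\mapsto\det r(a)^{\mps}$ is defined, namely $\mdom[\seqn]r$, is nonempty and that $\det r(a)^{\mps}=0$ for every $a\in\mdom[\seqn]r$. Put $N=n_1\cdots n_G$ and let $x$ be the tuple of multipartite generic matrices. Since $r$ is mp-defined somewhere, $r(x)$ is a well-defined element of $\Mat_N(\kk(\zeta))$, and for $a\in\mdom[\seqn]r$ one has $\mdom[\seqn]r\subseteq\dom r(x)$ with $r(a)^{\mps}$ obtained from $r(x)$ by the substitution $\zeta\mapsto a$ (this is the basic compatibility between mp-evaluation and evaluation at multipartite generic matrices, following from \eqref{e:tau1} and \eqref{e:tau0}).

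The crux is to show that $r(x)$ in fact lies in $\udn$. I would prove, by induction on the construction of $r$, that every sub-expression $s$ of $r$ that is mp-defined somewhere satisfies $s(x)\in\udn$ and that inverses taken in $\udn$ coincide with matrix inverses in $\Mat_N(\kk(\zeta))$. Scalars and multipartite generic matrices lie in $\gm{\seqn}(x)\subseteq\udn$, and sums and products of elements of $\udn$ stay in $\udn$. The only point of substance is inversion: if $s=t^{-1}$ occurs in $r$, then $t$ is mp-defined at any point $a$ at which $r$ is, so $t(x)|_{\zeta=a}=t(a)^{\mps}$ is invertible over $\kk$; hence $\det t(x)\neq0$ in $\kk(\zeta)$, so $t(x)\neq0$, and since $\udn$ is a skew field by Proposition~\ref{p:ud}, $t(x)$ is invertible in $\udn$ with inverse equal to its matrix inverse. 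Thus $s(x)\in\udn$, and the induction closes with $r(x)\in\udn$.

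With this in hand the conclusion is immediate. The map $a\mapsto\det r(a)^{\mps}$ is the restriction to the nonempty Zariski-open set $\mdom[\seqn]r$ of the rational function $\det r(x)\in\kk(\zeta)$, which is regular there; as it vanishes on $\mdom[\seqn]r$, which is dense in the irreducible variety $\cM^{\ug}_{\seqn}$, it is identically zero, i.e.\ $\det r(x)=0$. So $r(x)\in\udn$ is singular as an $N\times N$ matrix over $\kk(\zeta)$; being an element of the skew field $\udn$ it is either $0$ or invertible in $\udn$, and invertibility would contradict $\det r(x)=0$ through the embedding $\udn\hookrightarrow\Mat_N(\kk(\zeta))$. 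Hence $r(x)=0$, and substituting $\zeta\mapsto a$ gives $r(a)^{\mps}=0$ for every $a\in\mdom[\seqn]r$; that is, $r$ mp-vanishes on $\cM^{\ug}_{\seqn}$.

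I expect the main obstacle to be purely organizational: carrying the induction in the second paragraph so that, at each node of the syntax tree of $r$, one simultaneously keeps track of which sub-expressions are mp-defined and reconciles the three a priori different evaluations involved — mp-evaluation at matrix tuples in $\cM^{\ug}_{\seqn}$, formal evaluation at the multipartite generic matrices inside $\Mat_N(\kk(\zeta))$, and evaluation inside the abstract skew field $\udn$. Once the compatibility $r(a)^{\mps}=r(x)|_{\zeta=a}$ and the containment $r(x)\in\udn$ are secured, the remainder is the one-line observation that a singular element of a skew field embedded in a matrix algebra is zero — exactly the ``well-known PI argument'', resting entirely on Proposition~\ref{p:ud}.
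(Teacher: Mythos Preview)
Your proof is correct and follows essentially the same route as the paper's: pass to the multipartite generic matrices, observe $r(x)\in\udn$, use Zariski density to get $\det r(x)=0$, and invoke Proposition~\ref{p:ud} to conclude $r(x)=0$. The only difference is that you spell out the induction establishing $r(x)\in\udn$ and the compatibility $r(a)^{\mps}=r(x)|_{\zeta=a}$, which the paper simply asserts in one line.
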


\begin{proof}
If $x$ is the tuple of multipartite generic matrices in $\gm{\seqn}(x)$, then $r(x)\in \udn$. By assumption we have $\det r(a)^{\mps}=0$ for all $a\in\mdom[\seqn]r$ and thus $\det r(x)=0$. Therefore $r(x)$ is a zero divisor in $\udn$, but the latter is a skew field by Proposition \ref{p:ud}, so $r(x)=0$ and hence $r(a)^{\mps}=0$ for all $a\in\mdom[\seqn]r$.
\end{proof}

Consider the relation
\begin{equation}\label{e:rel}
r_1\sim^{\mps} r_2 \iff r_1(a)^{\mps}=r_2(a)^{\mps}\ \ \forall a\in\mdom r_1\cap\mdom r_2
\end{equation}
on the set of all mp-nondegenerate expressions in $\re$. It is not hard to check that $\sim^{\mps}$ is an equivalence relation; transitivity is proved using Remark \ref{r:updown} and the fact that the set $\mdom[\seqn]r$ is Zariski-open in $\cM^{\ug}_{\seqn}$ for every $r\in\re$. Let $\mpr$ be the set of equivalence classes of mp-nondegenerate expressions with respect to $\sim^{\mps}$. It becomes a $\kk$-algebra when endowed with the natural addition and multiplication. The equivalence class of $r\in\re$ is denoted $\rrb\in\mpr$. The domain of $\rrb$ is defined as the union of mp-domains of all representatives of $\rrb$ and is denoted $\dom \rrb$; the evaluation of $\rrb$ at $a\in\cM^{\ug}$ is then $\rrb(a)=r(a)^{\mps}$ for any representative $r\in\re$ such that $a\in\mdom r$. Elements of $\mpr$ are called {\bf multipartite (mp) rational functions}.

If $G=1$, our construction recovers the skew field of {\bf noncommutative (nc) rational functions}, see \cite{HMV,KVV1,KVV2}. In this case the equivalence class of a nondegenerate expression $r\in\cR_{\kk}(X^{(1)})$ is more commonly denoted $\rr\in \rx{1}$.

\begin{rem}\label{r:nn}
It is easy to see that we would get the same equivalence relation $\sim^{\mps}$ if we considered only mp-evaluations on $\bigcup_n \cM^{\ug}_{n,\dots,n}$ instead of $\cM^{\ug}$.
\end{rem}

\begin{rem}\label{r:shuffle}
	Let $\pi$ be a permutation on the set $\{1,\dots,G\}$. Then \eqref{e:tenspi} implies that there is an isomorphism
	$$\Psi_{\pi}\colon\mpr\to \prempr{\pi(1)}{\pi(G)}$$
	satisfying
	$$\Psi_{\pi}(\rrb)(a)=K_{\pi} \rrb(a)K_{\pi}^t,$$
	i.e., evaluations $\rrb(a)$ and $\Psi_{\pi}(\rrb)(a)$ are equal up to conjugation with a commutation matrix. Moreover,
	$$\kk\plangle Y^{(1)} \mpsymbol \cdots \mpsymbol Y^{(G)} \prangle\subseteq \mpr$$
	for $Y^{(i)}\subseteq X^{(i)}$.
\end{rem}

\begin{thm}\label{t:div}
The $\kk$-algebra $\mpr$ is a skew field and the $\kk$-algebra $\prempr{1}{G-1}\kotimes \rx{G}$ naturally embeds into $\mpr$. Therefore $\mpr$ is a skew field of fractions of $\txx$.
\end{thm}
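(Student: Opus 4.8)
The plan is to establish the three assertions in sequence, using the PI-theoretic machinery already in place. First, to see that $\mpr$ is a skew field, I would take a nonzero $\rrb\in\mpr$ represented by an mp-nondegenerate expression $r\in\re$, and show it has an inverse. Pick $\seqn$ with $\mdom[\seqn]r\neq\emptyset$ (possible by Remark~\ref{r:updown}), and let $x$ be the tuple of multipartite generic matrices in $\gm{\seqn}(x)$. Then $r(x)\in\udn$, which is a skew field by Proposition~\ref{p:ud}. If $r(x)=0$ for all such $\seqn$, then Lemma~\ref{l:det} (applied after noting $\det r$ mp-vanishes) together with Remark~\ref{r:updown} would force $r$ to mp-vanish everywhere, contradicting $\rrb\neq 0$. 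Hence $r(x)\neq 0$ for some $\seqn$, so $r(x)^{-1}$ exists in $\udn$; one then checks that the formal expression $r^{-1}$ is mp-nondegenerate (its mp-domain contains the nonempty Zariski-open subset of $\mdom[\seqn]r$ where $r$ is invertible), and that its class $\rrb^{-1}$ is a genuine two-sided inverse of $\rrb$ — consistency across different $\seqn$ follows from the transitivity argument behind $\sim^{\mps}$ and Remark~\ref{r:nn}.

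Second, for the embedding of $\cA:=\prempr{1}{G-1}\kotimes\rx{G}$, I would first embed $\txx$ into $\mpr$: a multipartite polynomial is a (commuting-adjusted) noncommutative polynomial, so it is represented by an expression in $\re$, and two such are $\sim^{\mps}$-equivalent iff they agree under all mp-evaluations, which by Proposition~\ref{p:tens} (or directly, since multipartite generic matrix evaluations detect nonzero elements of $\gm{\seqn}(x)$, and $\txx$ injects into a product of the $\gm{\seqn}(x)$) happens iff they are equal in $\txx$. Thus $\txx\hookrightarrow\mpr$. Now $\rx{G}$ is the free skew field on $X^{(G)}$, and every element of it is already represented by an expression in $\cR_\kk(X^{(G)})\subseteq\re$; the subalgebra of $\mpr$ generated by $\txx$ and by the images of $\rx{G}$ is a homomorphic image of the tensor product $\cA$ over $\kk$, and injectivity of this map is what must be verified. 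The universal property of the tensor product gives a well-defined $\kk$-algebra homomorphism $\cA\to\mpr$; to see it is injective I would use that $\cA$ embeds into $\bkotimes_{i<G}\ud{n_i}(x^{(i)})\Cotimes\rx{G}$-type rings and track a putative nonzero element of the kernel down to a multipartite generic matrix evaluation where it survives — here the key is that elements of $\rx{G}$ are detected by evaluations on generic matrices $x^{(G)}$ of large enough size (Amitsur's theorem for the free skew field), and these are compatible with the tensor structure via \eqref{e:tau1}.

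Third, once we have the embedding $\cA\hookrightarrow\mpr$, the statement that $\mpr$ is a skew field of fractions of $\txx$ follows from Definition~\ref{d:sff}: the composite $\txx\hookrightarrow\cA\hookrightarrow\mpr$ is an embedding, and its image generates $\mpr$ as a skew field because every mp-nondegenerate expression is, by construction, built from elements of $\kk$ and $X$ using the field operations, so $\mpr$ is generated as a skew field by the image of $\txx$.

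The main obstacle I expect is the injectivity in the second part — showing that no nontrivial $\kk$-linear combination $\sum_j p_j\otimes \rr_j$ with $p_j\in\prempr{1}{G-1}$ and $\rr_j\in\rx{G}$ linearly independent maps to $0$ in $\mpr$. The difficulty is that $\mpr$ is built from matrix evaluations of mixed sizes, and one must produce a single mp-evaluation $\tau(a)$ on which the combination is nonzero; this requires simultaneously choosing large generic matrices $a^{(i)}$ for $i<G$ (to separate the $p_j$, via Lemma~\ref{l:prime} / the primeness of the multipartite generic matrix ring) and a tuple $a^{(G)}$ over a suitable skew field or matrix algebra (to separate the $\rr_j$, via Amitsur), and then invoking the block structure of Kronecker products to see the two choices do not interfere. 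This is presumably where the results of Section~\ref{sec4} (in particular the characterization leading to Corollary~\ref{c:rz}) are meant to be brought in, so in the actual write-up I would expect this embedding to be proved only after that machinery, or the theorem restated as establishing just $\txx\hookrightarrow\mpr$ with the stronger tensor embedding deferred.
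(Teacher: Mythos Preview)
Your argument for the first claim (that $\mpr$ is a skew field) is essentially the paper's: the content is Lemma~\ref{l:det} together with the obvious observation that if $r$ does not mp-vanish then $r^{-1}$ is mp-nondegenerate. The third claim (that the image of $\txx$ generates $\mpr$ as a skew field) is also correctly handled.

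For the second claim you have correctly located the issue --- injectivity of $\prempr{1}{G-1}\kotimes\rx{G}\to\mpr$, reduced to showing that if $\sum_i\rrb_i\otimes\rs_i\mapsto 0$ with the $\rs_i\in\rx{G}$ $\kk$-linearly independent then all $\rrb_i=0$ --- but you are missing the right tool, and this leads you to overestimate the difficulty. Amitsur's theorem only tells you that each individual nonzero $\rs_i$ is nonzero at some matrix point; it does not give you a \emph{single} point $b$ at which $\rs_1(b),\dots,\rs_k(b)$ are simultaneously $\kk$-linearly independent, which is what you actually need. That stronger statement is the \emph{local-global principle for linear dependence} of nc rational functions (see \cite[Corollary~8.87]{HKM} or \cite[Theorem~6.6]{Vol}): $\kk$-linearly independent elements of $\rx{G}$ admit a common matrix evaluation at which their values are $\kk$-linearly independent. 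Once you invoke this, the argument is one line: choose such $b\in\mm{n_G}^{g_G}$; then for almost every $a$,
\[
\sum_i\rrb_i(a)\otimes\rs_i(b)=\Big(\sum_i\rrb_i\rsb_i\Big)(a,b)=0,
\]
and $\kk$-linear independence of the $\rs_i(b)$ in the second tensor factor forces each $\rrb_i(a)=0$, hence $\rrb_i=0$.

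In particular, no machinery from Section~\ref{sec4} is required here; the theorem is proved in Section~\ref{sec3}, and your suggestion to defer the embedding is unnecessary.
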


\begin{proof}
The first statement is a direct consequence of Lemma \ref{l:det}. For the second statement, observe that $\prempr{1}{G-1}$ and $\rx{G}$ embed into $\mpr$, so we have a homomorphism
$$\prempr{1}{G-1}\kotimes \rx{G}\to\mpr.$$
Assume $\sum_i\rrb_i \otimes \rs_i$ lies in its kernel and $\rs_i$ are $\kk$-linearly independent in $\rx{G}$. By the local-global principle for linear dependence of nc rational functions as in \cite[Corollary 8.87]{HKM} or \cite[Theorem 6.6]{Vol}, there exists $b\in \mm{n_G}^{g_G}$ such that $\rs_i(b)$ are $\kk$-linearly independent. Since
$$\sum_i\rrb_i(a) \otimes \rs_i(b)=\left(\sum_i\rrb_i\rsb_i\right)(a,b)=0$$
for almost every $a\in \cM^{\ug}$, we have $\rrb_i(a)=0$ for all $a$ by the property of the tensor product and so $\rrb_i=0$.
\end{proof}

\subsection{Matrices over mp rational functions}\label{subsec31}

The next local-global property will be crucial in the proof of the main result in 
Subsection \ref{subsec42}.

\begin{prop}\label{p:inv}
Let $\MB$ be a $d\times d$ matrix over $\mpr$. Then $\MB$ is invertible if and only if $\MB(a)\in \mm{d n_1\cdots n_G}$ is invertible for some $a\in\cM^{\ug}_{\seqn}$.
\end{prop}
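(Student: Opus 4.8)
The plan is to treat $\MB\in\Mat_d(\mpr)$ as a rational expression in its own right, or rather to reduce the statement to the scalar case $d=1$ already contained in Lemma \ref{l:det} and Proposition \ref{p:ud}. The key observation is that a matrix $\MB$ over $\mpr$ is represented by a $d\times d$ matrix of nc rational expressions $R=(r_{kl})$, all of which we may assume share a common point of mp-definition; then $\MB(a)$ for $a\in\cM^{\ug}_{\seqn}$ is obtained by stacking the blocks $r_{kl}(a)^{\mps}$, so $\MB(a)\in\Mat_{dn_1\cdots n_G}(\kk)$ is exactly the evaluation of the matrix $R$ at the tuple of multipartite generic matrices $x\in\gm{\seqn}(x)$, specialized at $a$. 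The forward direction is immediate: if $\MB$ is invertible over the skew field $\mpr$, its inverse is again a matrix over $\mpr$, hence mp-defined on a nonempty Zariski-open set, and at any common point $a$ of definition $\MB(a)$ and $(\MB^{-1})(a)$ are mutually inverse matrices over $\kk$.

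For the reverse direction, suppose $\MB(a)\in\Mat_{dn_1\cdots n_G}(\kk)$ is invertible for some $a\in\cM^{\ug}_{\seqn}$. First I would pass to the multipartite generic matrix ring: the matrix $\MB(x)\in\Mat_d(\udn)$ is well-defined (after shrinking to a common denominator and using $\mdom[\seqn]r\subseteq\dom r(x)$), and $\det\MB(x)\in\udn$ makes sense since $\udn$ is a skew field by Proposition \ref{p:ud}. Now $\det\MB(x)$, viewed as an element of the simple PI-ring $\udn$, either vanishes or is invertible. If it vanished, then the Dieudonn\'e determinant — or more elementarily, the fact that a singular matrix over a skew field has nontrivial kernel — would force $\det\MB(a)^{\mps}$ (in the appropriate sense, i.e. the ordinary determinant of the specialized block matrix) to vanish for \emph{every} $a$ in the mp-domain, contradicting the assumed invertibility at our particular $a$. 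Here the bridge is that specialization $\gm{\seqn}(x)\to\Mat_{n_1\cdots n_G}(\kk)$, $x\mapsto a$, is a ring homomorphism, so it commutes with taking determinants of matrices with entries in the generic ring whenever those entries specialize (which holds on the Zariski-open common mp-domain). Thus $\det\MB(x)\neq0$, hence $\MB(x)$ is invertible over the skew field $\udn$, so its inverse is a matrix with entries in $\udn$; but more is needed — I want entries in $\mpr$, which is a \emph{smaller} skew field than the $\udn$'s, varying with $\seqn$.

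The cleanest route around this is to invert $\MB$ directly inside $\Mat_d(\mpr)$ using Cramer/cofactor bookkeeping at the level of rational expressions: once we know that for the chosen $\seqn$ the expression $\det\MB$ (a single nc rational expression built from the entries $r_{kl}$ by the Leibniz formula, after clearing to a common mp-domain) does not mp-vanish on $\cM^{\ug}_{\seqn}$, Lemma \ref{l:det} combined with Proposition \ref{p:ud} shows that $(\det\MB)(x)\in\udn$ is nonzero, hence the class $\widehat{\det\MB}\in\mpr$ is nonzero, hence invertible since $\mpr$ is a skew field by Theorem \ref{t:div}. Then the matrix of cofactors $\operatorname{adj}(\MB)$ has entries in $\mpr$, and $\operatorname{adj}(\MB)\cdot\MB=\MB\cdot\operatorname{adj}(\MB)=(\det\MB)\cdot I_d$ as an identity in $\Mat_d(\mpr)$ — this last identity I would verify by checking it after mp-evaluation at almost every $a\in\cM^{\ug}_{\seqn}$, where it reduces to the classical adjugate identity over $\kk$, and then invoking injectivity of $\rrb\mapsto(\rrb(a))_a$. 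Multiplying through by $(\det\MB)^{-1}\in\mpr$ exhibits $\MB^{-1}\in\Mat_d(\mpr)$. The main obstacle, and the point deserving the most care, is the bookkeeping that lets one treat $\det\MB$ and $\operatorname{adj}(\MB)$ as genuine elements of $\mpr$ with the expected evaluation behaviour: one must fix a single common mp-domain for all $d^2$ entries $r_{kl}$ simultaneously (possible since a finite intersection of nonempty Zariski-open subsets of some $\cM^{\ug}_{\seqn}$ is nonempty, using Remark \ref{r:updown} to move to a common level set), and then check that the Leibniz and adjugate formulas, interpreted in $\re$, are compatible with mp-evaluation — this is routine but is exactly where the multipartite block structure $\MB(a)=$ "blocks $r_{kl}(\tau(a))$" is used, since the determinant of the $dn_1\cdots n_G$-sized matrix is \emph{not} the $\kk$-image of the Dieudonn\'e determinant of $\MB(x)$, only its vanishing is detected correctly.
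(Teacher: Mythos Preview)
Your proposal has a genuine gap in the reverse direction: the Leibniz-formula determinant and the adjugate identity do not make sense over the noncommutative skew field $\mpr$. The entries $r_{kl}$ of $\MB$ need not commute, so the expression $\sum_\sigma \operatorname{sgn}(\sigma) \prod_k r_{k\sigma(k)}$ depends on the order in which you write each product; more fatally, the identity $\operatorname{adj}(\MB)\cdot\MB = (\det\MB)I_d$ simply fails in $\Mat_d(\mpr)$---already for $d=2$ the off-diagonal entries of $\operatorname{adj}(\MB)\MB$ are commutators such as $db-bd$. Your proposed verification ``after mp-evaluation, where it reduces to the classical adjugate identity over $\kk$'' does not rescue this: at a point $a\in\cM^{\ug}_{\seqn}$ the evaluated entries $r_{kl}(a)^{\mps}$ lie in $\mm{n_1\cdots n_G}$, not in $\kk$, and these blocks do not commute either, so the $d\times d$ block-adjugate identity still fails. (The classical adjugate identity for the full $dn_1\cdots n_G\times dn_1\cdots n_G$ matrix over $\kk$ is of course true, but it is not what you obtain by evaluating your Leibniz expression and cofactor matrix.) You seem partly aware of the tension---you mention the Dieudonn\'e determinant and note that ``only its vanishing is detected correctly''---but then proceed with the commutative adjugate anyway.

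The paper sidesteps this entirely by induction on $d$ using Schur complements, which is the standard replacement for cofactor expansion over a skew field: after permuting so that $\MB_{11}\neq 0$, one finds $a'$ with both $\MB(a')$ and $\MB_{11}(a')$ invertible, observes that the Schur complement $(\MB_4-\MB_3\MB_{11}^{-1}\MB_2)(a')$ is then invertible, applies the inductive hypothesis to this $(d-1)\times(d-1)$ matrix over $\mpr$, and reads off invertibility of $\MB$ from the block-$LDU$ factorization. Your argument would become correct if you replaced the commutative determinant and adjugate by quasi-determinants in the sense of Gelfand--Retakh, but that amounts to exactly this Schur-complement induction.
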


\begin{proof}
If $\MB$ is invertible, then the intersection of the domains of entries of $\MB$ and $\MB^{-1}$ is non-empty, so it contains some $a\in\cM^{\ug}$; then $\MB(a)$ is obviously invertible.

The converse is proved by induction on $d$; the basis of induction $d=1$ is clear by definition of $\mpr$, so let $d>1$. Assume there exists $a\in\cM^{\ug}_{\seqn}$ such that all entries of $\MB$ are defined in $a$ and $\MB(a)$ are invertible. Obviously, this then holds for almost every $a\in\cM^{\ug}_{\seqn}$. Clearly $\MB$ has at least one nonzero entry; after permuting rows and columns we can assume that $\MB_{11}\neq0$. By the previous argument there exists $a'\in\cM^{\ug}_{\seqn}$ such that $\MB(a')$ and $\MB_{11}(a')$ are invertible. Consider the partition
$$\MB=\begin{pmatrix} \MB_{11}& \MB_2 \\ \MB_3 & \MB_4 \end{pmatrix}.$$
Then $(\MB_4-\MB_3\MB_{11}^{-1}\MB_2)(a')$ is invertible since it is the Schur complement of $\MB_{11}(a')$, so $\MB_4-\MB_3\MB_{11}^{-1}\MB_2$ is invertible by the induction hypothesis. Therefore $\MB$ is invertible.
\end{proof}

Our definition of mp rational functions admits a convenient interpretation of a partial evaluation with respect to $X^{(1)}$ in terms of matrices over mp rational functions in the remaining variables as is shown in the following proposition.

\begin{prop}\label{p:matval}
Let $\rrb\in \mpr$ and $a\in \Mat_{d}(\kk)^{g_1}$ be such that
$(a,b)\in \dom_{d,n_2,\dots,n_G}\rrb$
for some $b\in \cM_{n_2,\dots,n_G}^{(g_2,\dots,g_G)}$. 
Then there exists $\SB\in \Mat_d(\prempr{2}{G})$ such that $\rrb(a,c)=\SB(c)$ for all $c\in\dom \SB$ such that $(a,c)\in \dom \rrb$.
\end{prop}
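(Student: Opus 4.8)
The plan is to use the hypothesis $(a,b)\in\dom\rrb$ to fix a representative $r\in\re$ of $\rrb$ with $(a,b)\in\mdom r$, and then to build $\SB$ by structural induction on $r$, treating the variables as $X^{(1)}$ together with $X^{(2)}\cup\cdots\cup X^{(G)}$. To each subexpression $s$ of $r$ I would attach a matrix $\SB_s\in\Mat_d(\prempr{2}{G})$ by the rules $\SB_\lambda=\lambda I_d$ for $\lambda\in\kk$, $\SB_{X^{(1)}_j}=a_j$ (a matrix with entries in $\kk\subseteq\prempr{2}{G}$), $\SB_{X^{(i)}_j}=X^{(i)}_j\cdot I_d$ for $i\ge 2$ (the scalar matrix with the generator $X^{(i)}_j\in\kk\plangle X^{(i)}\prangle$ on the diagonal), $\SB_{s_1+s_2}=\SB_{s_1}+\SB_{s_2}$, $\SB_{s_1s_2}=\SB_{s_1}\SB_{s_2}$ and $\SB_{(s)^{-1}}=\SB_s^{-1}$, and finally $\SB\eqcolon\SB_r$. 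Since $\prempr{2}{G}$ is a skew field by Theorem~\ref{t:div} (applied to the $G-1$ groups $X^{(2)},\dots,X^{(G)}$), $\Mat_d(\prempr{2}{G})$ is closed under inversion of invertible elements, so the only thing needed to make this recursion legitimate is that $\SB_s$ be invertible whenever $(s)^{-1}$ occurs in $r$.

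Alongside the construction I would carry the induction hypothesis that for every subexpression $s$ of $r$ and every $c$ with $(a,c)\in\mdom s$ one has $c\in\dom\SB_s$ and, under the identification $\mm{d\,m_2\cdots m_G}\cong\mm{d}\otimes\mm{m_2}\otimes\cdots\otimes\mm{m_G}$ from \eqref{e:can},
$$\SB_s(c)=s(\tau(a,c)),$$
where $\SB_s(c)$ denotes the $d\times d$ block matrix obtained by evaluating the entries of $\SB_s$ at $c$. In the three base cases this is a direct unwinding of the definitions of the $\tau_i$; it passes through sums and products because evaluation at $c$ is an algebra homomorphism $\Mat_d(\prempr{2}{G})\dashrightarrow\mm{d\,m_2\cdots m_G}$ on the relevant common domain; and it passes through inversion because, once $\SB_s(c)=s(\tau(a,c))$ is known to be invertible — which holds precisely when $(a,c)\in\mdom{(s)^{-1}}$ — the homomorphism property gives $\SB_s^{-1}(c)=\SB_s(c)^{-1}=s(\tau(a,c))^{-1}$.

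The heart of the matter is the invertibility claim at an inversion step, and this is the step I expect to be the main obstacle. Fix a subexpression $(s)^{-1}$ of $r$. By the induction hypothesis $\SB_s(b)=s(\tau(a,b))$, and this is an invertible matrix over $\kk$ because $(a,b)\in\mdom r\subseteq\mdom{(s)^{-1}}$. Thus $\SB_s\in\Mat_d(\prempr{2}{G})$ admits an invertible evaluation at the point $b$, so by Proposition~\ref{p:inv} (again for the groups $X^{(2)},\dots,X^{(G)}$) it is invertible over $\Mat_d(\prempr{2}{G})$, with $\SB_s^{-1}$ again a matrix over $\prempr{2}{G}$. A priori one controls only this single matrix evaluation of $\SB_s$; it is Proposition~\ref{p:inv} that upgrades it to honest invertibility over the skew field, and without this the recursive definition $\SB_{(s)^{-1}}=\SB_s^{-1}$ would be meaningless.

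It remains to match evaluations. The induction hypothesis at $s=r$ gives $\SB(c)=r(\tau(a,c))=\rrb(a,c)$ for every $c$ with $(a,c)\in\mdom r$; this set is Zariski-open, nonempty on the level of $b$, and by Remark~\ref{r:updown} nonempty on every level $(k_2n_2,\dots,k_Gn_G)$. For an arbitrary $c_0\in\dom\SB$ with $(a,c_0)\in\dom\rrb$, choose a representative $r'$ of $\rrb$ with $(a,c_0)\in\mdom{r'}$; since $r\sim^{\mps}r'$, the entries of $\SB$ and of $r'(\tau(a,\cdot))$ agree as rational functions on a nonempty Zariski-open subset of a level common to $b$ and $c_0$ (obtained by lifting both to a common multiple of the two levels via Remark~\ref{r:updown}), hence agree at $c_0$ after descending via Remark~\ref{r:updown}. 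This is the routine continuation argument already used for transitivity of $\sim^{\mps}$, and I would not dwell on it.
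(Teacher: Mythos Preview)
Your proposal is correct and follows essentially the same strategy as the paper's proof: pick a representative $r$ of $\rrb$ defined at $(a,b)$, substitute $a$ for $X^{(1)}$ to obtain a $d\times d$ matrix-valued expression, and use induction on the structure (the paper says ``inversion height'') together with Proposition~\ref{p:inv} at each inversion step to produce $\SB\in\Mat_d(\prempr{2}{G})$. Your final paragraph, extending agreement from $\mdom r$ to all of $\dom\rrb$ via Remark~\ref{r:updown} and Zariski density, spells out a detail the paper leaves implicit.
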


\begin{proof}
Let $r$ be a representative of $\rrb$ with $(a,b)\in \mdom[d,n_2,\dots,n_G]r$. Consider the $d\times d$ matrix-valued rational expression $s$ in $X^{(2)},\dots, X^{(G)}$ over $\kk$ which we get by replacing $X^{(1)}_j$ with $a_j$ in $r$ (cf.~\cite[Definition 2.1]{KVV1} or \cite[Section 2]{KVV2}). By the definition of the Kronecker product we have $r(a,c)=s(c)$ for all $c\in\mdom s$ such that $(a,c)\in\mdom r$. By induction on the inversion height and repetitive application of Proposition \ref{p:inv} we can see that $s$ can be represented as a $d\times d$ matrix $S$ whose entries are mp-nondegenerate rational expressions in $X^{(2)},\dots, X^{(G)}$ over $\kk$ (cf.~\cite[Remark 2.16]{KVV1}). This $S$ determines $\SB\in \Mat_d(\prempr{2}{G})$ with the desired property.
\end{proof}

\subsection{Intersections and centralizers}\label{subsec32}

Theorem \ref{t:div} implies that $\prempr{i_1}{i_k}$ naturally embeds into $\mpr$ for $1\le i_1<\dots <i_k\le G$ and $k\le G$. Here we establish intersection and commutation relations between these embeddings, which reflect the corresponding relations between subrings $\px{i}$ of $\txx$. While the results are not surprising, their  proofs are somewhat subtle since mp rational functions are defined as equivalence classes with respect to matrix evaluations.

\begin{lem}\label{l:indep}
Let $\rrb\in\mpr$ and assume that $\rrb$ is independent of $X^{(1)}$ on each level set, i.e.,
\begin{equation}\label{e:1st}
\rrb(a'^{(1)},b)=\rrb(a''^{(1)},b)
\end{equation}
for all $(a'^{(1)},b), (a''^{(1)},b)\in\mdom \rrb$ such that the sizes of matrices in $a'^{(1)}$ and $a''^{(1)}$ coincide. Then $\rrb\in\prempr{2}{G}$.
\end{lem}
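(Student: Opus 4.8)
The plan is to pass from the hypothesis about evaluations to the algebraic conclusion by exploiting the embedding structure established in Theorem~\ref{t:div} together with the tensor product $\prempr{2}{G}\kotimes\rx{1}$, which by Remark~\ref{r:shuffle} (reorder the blocks so that $X^{(1)}$ comes last) also embeds into $\mpr$. Concretely, fix a representative $r$ of $\rrb$ and pick some $(a'^{(1)},b)\in\mdom r$ with the matrices in $a'^{(1)}$ of size $d$; the idea is to use Proposition~\ref{p:matval} to produce $\SB\in\Mat_d(\prempr{2}{G})$ with $\rrb(a'^{(1)},c)=\SB(c)$ on a dense set of $c$. The hypothesis \eqref{e:1st} then says that $\SB(c)$, as an element of $\Mat_d$ over mp rational functions in $X^{(2)},\dots,X^{(G)}$, agrees with every other partial evaluation of $\rrb$ at a tuple of $d\times d$ matrices in the first block — so the candidate element of $\prempr{2}{G}$ should be read off from a single entry, or from the $(1,1)$-entry after conjugating $a'^{(1)}$ to a convenient form.

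First I would reduce to the case where the matrices in $a'^{(1)}$ are chosen so that $\SB$ is a \emph{scalar} matrix $\sB\cdot I_d$ for some $\sB\in\prempr{2}{G}$. To see this is possible, note that by Proposition~\ref{p:basic}(1) we may take $d=1$: if $(a'^{(1)},b)\in\mdom r$ with first-block size $n_1$, and if $(a''^{(1)},b)\in\mdom r$ with first-block size $1$ (such a tuple exists by Remark~\ref{r:updown} applied in the downward direction — take $a''^{(1)}$ to be any scalars in the mp-domain, e.g. a generic point, shrinking to size $1$ is the content of choosing $k_i$ appropriately), then \eqref{e:1st} compares matrices of the same size only, so I should instead argue as follows. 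Take two tuples $a'^{(1)}$ and $a''^{(1)}$ of size $n_1$ that are simultaneously conjugate to block-scalar tuples; more efficiently, fix the size $n_1$ and let $a'^{(1)}$ range over a Zariski-dense set of scalar-diagonalizable tuples. For a diagonalizable commuting choice this is not available, so the cleanest route is: restrict to $n_1=1$ outright using Remark~\ref{r:nn}-style reasoning — but \eqref{e:1st} as stated fixes the size — hence the genuinely robust approach is the matrix one below.

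So, the robust plan: apply Proposition~\ref{p:matval} at a point $(a'^{(1)},b)$ of first-block size $d$ to get $\SB\in\Mat_d(\prempr{2}{G})$ with $\rrb(a'^{(1)},c)=\SB(c)$ for almost every $c$. Independently apply it at $(p\,a'^{(1)}p^{-1},b)$ for $p\in\GL_d(\kk)$; by Proposition~\ref{p:basic}(2) the resulting matrix over $\prempr{2}{G}$ is $(p\otimes I)\,\SB\,(p\otimes I)^{-1}=p\,\SB\,p^{-1}$ (the $I$ here coming from the second block being evaluated), while the hypothesis \eqref{e:1st} forces $\rrb(p\,a'^{(1)}p^{-1},c)=\rrb(a'^{(1)},c)$, hence $p\,\SB\,p^{-1}=\SB$ for all $p\in\GL_d(\kk)$. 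Since the centralizer of $\GL_d(\kk)$ in $\Mat_d(\prempr{2}{G})$ is the scalar matrices (the $e_{ij}$ are a basis over the center and $\GL_d(\kk)$ acts irreducibly), we get $\SB=\sB\cdot I_d$ with $\sB\in\prempr{2}{G}$. Then $\rrb$ and (the image of) $\sB$ agree on a Zariski-dense subset of the level set $\cM^{\ug}_{d,n_2,\dots,n_G}$, so they agree as elements of $\mpr$ by the equivalence-relation argument (using Remark~\ref{r:updown} to propagate to all level sets, exactly as in the proof that $\sim^{\mps}$ is transitive), i.e. $\rrb=\sB\in\prempr{2}{G}$.

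The main obstacle is the step identifying $\SB$ as a scalar matrix: one must be careful that the hypothesis only compares evaluations at tuples of the \emph{same} first-block size, so all the conjugation tricks have to be carried out within a fixed level set $\cM^{\ug}_{d,n_2,\dots,n_G}$, and one must check that the conjugation formula from Proposition~\ref{p:basic}(2) interacts correctly with the partial-evaluation matrix $\SB$ from Proposition~\ref{p:matval} — essentially that forming $\SB$ and conjugating the first block commute, which follows by tracking the Kronecker-product bookkeeping in the proof of Proposition~\ref{p:matval}. Once $\SB=\sB I_d$ is established, the remainder is the now-standard density/propagation argument for $\sim^{\mps}$.
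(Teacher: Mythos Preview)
Your ``robust plan'' (third paragraph) is exactly the paper's proof: obtain $\SB$ via Proposition~\ref{p:matval}, use \eqref{e:1st} and Proposition~\ref{p:basic}(2) to show $p\SB p^{-1}=\SB$ for all $p\in\GL_d(\kk)$, deduce $\SB=\rsb I_d$, then propagate. The only refinement is that the propagation to larger first-block sizes uses Proposition~\ref{p:basic}(1) together with the hypothesis \eqref{e:1st} once more, not merely the transitivity argument for $\sim^{\mps}$.
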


\begin{proof}
Fix an arbitrary $\tilde{a}\in \mm{m_1}^{g_1}$ such that $(\tilde{a},\tilde{b})\in \dom_{m_1,m_2,\dots,m_G}\rrb$ and let
$$\Omega_{n_2,\dots,n_G}=\left\{b\in \cM^{\ug}_{n_2,\dots,n_G}\colon (\tilde{a},b)\in \dom_{m_1,n_2,\dots,n_G} \rrb\right\},
\qquad \Omega=\bigcup_{n_2,\dots,n_G} \Omega_{n_2,\dots,n_G}.$$
By \eqref{e:1st} and Proposition \ref{p:matval} we can find $\SB\in \Mat_{m_1}(\prempr{2}{G})$ such that $\rrb(\tilde{a},b)=\RS(b)$ for almost every $b\in\Omega$. Proposition \ref{p:basic}(2) then implies
\begin{align*}
\SB(b)
&=\rrb(\tilde{a},b)=\rrb(p\tilde{a}p^{-1},b) \\
&=(p\otimes I)\rrb(\tilde{a},b)(p^{-1}\otimes I)=(p\otimes I)\SB(b)(p^{-1}\otimes I)
\end{align*}
for almost every $b\in\Omega$ and every $p\in \GL_{m_1}(\kk)$. But this implies $\SB=p\SB p^{-1}$ for all $p\in \GL_{m_1}(\kk)$, so $\SB=\rsb I_{m_1}$ for some $\rsb\in\prempr{2}{G}$. Hence we have
\begin{equation}\label{e:2nd}
\rrb(\tilde{a},b)=I_{m_1}\otimes \rsb(b)
\end{equation}
for almost every $b\in\Omega$.

We claim that $\rrb=\rsb$, i.e.,
\begin{equation}\label{e:3rd}
\rrb(a,b)=I\otimes \rsb(b)
\end{equation}
holds for almost every $(a,b)\in \mdom\rrb$. Let
$$\widehat{\Omega}_{k_1,\dots,k_G}=\left\{(a,b)\in\dom_{k_1m_1,\dots,k_Gm_G}\rrb\colon b\in \Omega\right\},
\qquad \widehat{\Omega}=\bigcup_{k_1,\dots,k_G}\widehat{\Omega}_{k_1,\dots,k_G}.$$
The sets $\widehat{\Omega}_{k_1,\dots,k_G}$ are non-empty for all $k_1,\dots, k_G\in\N$ by Remark \ref{r:updown}. Moreover, $\widehat{\Omega}_{k_1,\dots,k_G}$ is Zariski-open in $\cM^{\ug}_{k_1m_1,\dots,k_Gm_G}$ for every choice of $k_i\in\N$. By Remark \ref{r:updown} and a density argument it is thus enough to prove that \eqref{e:3rd} holds on $\widehat{\Omega}$. But \eqref{e:3rd} holds on $\widehat{\Omega}_{1,k_2,\dots,k_G}$ by \eqref{e:2nd} and \eqref{e:1st}, and consequently holds on $\widehat{\Omega}_{k_1,\dots,k_G}$ by Proposition \ref{p:basic}(1) and \eqref{e:1st}.
\end{proof}

\begin{lem}\label{l:cap}
Let $1\le G_0\le G_1\le G$. Then
\begin{equation}\label{e:cap}
\prempr{1}{G_1}\cap \prempr{G_0}{G}=\prempr{G_0}{G_1}
\end{equation}
holds in $\mpr$.
\end{lem}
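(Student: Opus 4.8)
The inclusion $\supseteq$ is clear from Remark~\ref{r:shuffle} (or just from the definitions), so the content is the reverse inclusion: an mp rational function $\rrb$ that can be represented using only the variables $X^{(1)},\dots,X^{(G_1)}$ and also using only the variables $X^{(G_0)},\dots,X^{(G)}$ must in fact lie in $\prempr{G_0}{G_1}$. The natural strategy is to peel off the "extra" variable blocks one at a time using Lemma~\ref{l:indep}, which is exactly designed to detect when an mp rational function does not depend on a given block $X^{(i)}$: if $\rrb$ has a representative not involving $X^{(i)}$, then its mp-evaluation at a tuple is independent of the $i$-th component (on each level set), and Lemma~\ref{l:indep} (applied after a shuffle via Remark~\ref{r:shuffle} to move $X^{(i)}$ into the first position) then places $\rrb$ in the multipartite free skew field over the remaining blocks.

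**Key steps, in order.** First I would reduce to the two extreme cases by symmetry: if $G_1 < G$ we must remove the blocks $X^{(G_1+1)},\dots,X^{(G)}$, and if $G_0 > 1$ we must remove $X^{(1)},\dots,X^{(G_0-1)}$; doing both gives the claim, and the two are mirror images under the reversal permutation via Remark~\ref{r:shuffle}, so it suffices to treat the first. So fix $\rrb \in \prempr{1}{G_1}\cap\prempr{G_0}{G}$ and focus on showing $\rrb$ does not involve $X^{(G)}$, say (then induct downward to remove $X^{(G_1+1)}$ through $X^{(G)}$ one block at a time). Since $\rrb \in \prempr{1}{G_1}$, it has a representative $r$ in the variables $X^{(1)},\dots,X^{(G_1)}$ only; hence for any $(a^{(1)},\dots,a^{(G)})$ in $\mdom r$, the value $r(a)^{\mps}$ is unchanged if we vary $a^{(G)}$ (keeping the size $n_G$ fixed) — because $\tau_G(a^{(G)})$ never appears when we expand $r$. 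This is precisely hypothesis~\eqref{e:1st} of Lemma~\ref{l:indep} with the roles of the first and last blocks swapped. After applying the shuffle isomorphism $\Psi_\pi$ from Remark~\ref{r:shuffle} that cyclically moves $X^{(G)}$ to the front (which preserves the hypothesis and the conclusion up to the canonical shuffle), Lemma~\ref{l:indep} yields $\rrb \in \prempr{1}{G-1}$. Repeating this argument $G-G_1$ times — each time the current $\rrb$ still has a representative avoiding $X^{(G_1+1)},\dots,X^{(G)}$, so each successive block can be stripped — gives $\rrb \in \prempr{1}{G_1}$, and the mirror argument on the other side gives $\rrb \in \prempr{G_0}{G_1}$.

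**The main obstacle.** The delicate point is bookkeeping with Lemma~\ref{l:indep}, whose statement is phrased only for removing the \emph{first} block $X^{(1)}$ and concludes membership in $\prempr{2}{G}$. To apply it to an arbitrary block $X^{(i)}$ one must pass through the shuffle isomorphism $\Psi_\pi$ of Remark~\ref{r:shuffle}, check that the independence hypothesis~\eqref{e:1st} transports correctly under $\Psi_\pi$ (it does, since $\Psi_\pi$ only conjugates evaluations by a fixed commutation matrix, which is a size-$(n_1\cdots n_G)$ permutation and so commutes with the "vary one block, fix all sizes" operation), and then transport the conclusion back. One also needs, at each stage of the iteration, that the current representative genuinely avoids the block being removed — this is automatic because we start with a representative $r$ in $X^{(1)},\dots,X^{(G_1)}$ and never touch it; only the \emph{identification} of $\rrb$ as an element of a smaller multipartite skew field changes. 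Finally, in the last steps of the downward induction one should make sure the base cases $G_0 = G_1$ (trivial) and the degenerate indexing don't cause the shuffle or Lemma~\ref{l:indep} to be vacuous; these are routine. I expect no real analytic difficulty here — the density/level-set machinery is all inside Lemma~\ref{l:indep} already — just the combinatorial care of iterating it cleanly.
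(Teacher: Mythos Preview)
Your approach is the same as the paper's: both prove the nontrivial inclusion by iterating Lemma~\ref{l:indep}, transported to the relevant block via the shuffle isomorphisms of Remark~\ref{r:shuffle}. The paper's proof is in fact just the one line ``the inclusion $\subseteq$ holds by Lemma~\ref{l:indep} and its variants for $X^{(i)}$, which hold by Remark~\ref{r:shuffle}.''

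That said, your write-up has a logical wobble worth fixing. As you have organized it, the first batch of removals (stripping $X^{(G_1+1)},\dots,X^{(G)}$ in the ambient $\mpr$) merely re-derives the hypothesis $\rrb\in\prempr{1}{G_1}$; and the literal ``mirror'' of that argument would only re-derive the other hypothesis $\rrb\in\prempr{G_0}{G}$. The two batches are \emph{not} independent reductions that can be combined by symmetry to yield $\prempr{G_0}{G_1}$: the second batch must be carried out \emph{inside} the smaller ambient produced by the first. Concretely, it is cleanest to run only one batch: work in the given $\prempr{1}{G_1}$ (a multipartite skew field on $G_1$ blocks), observe that the independence of $\rrb$ from $X^{(1)}$ in $\mpr$ descends to independence in $\prempr{1}{G_1}$ (since evaluation in $\mpr$ of an element of $\prempr{1}{G_1}$ is the $\prempr{1}{G_1}$-evaluation tensored with an identity), and then apply Lemma~\ref{l:indep} there to get $\rrb\in\prempr{2}{G_1}$; iterate down to $\prempr{G_0}{G_1}$. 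This is the step your ``mirror'' sentence is gesturing at, but you should say explicitly that it takes place in the smaller skew field and that the independence hypothesis transfers.
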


\begin{proof}
While the inclusion $\supseteq$ in \eqref{e:cap} is obvious, the inclusion $\subseteq$ holds by Lemma \ref{l:indep} and its variants for $X^{(i)}$, which hold by Remark \ref{r:shuffle}.
\end{proof}

Let $\cent(S)$ denote the centralizer of set $S$ in $\mpr$.

\begin{lem}\label{l:cent}
If $|X^{(1)}|>1$, then $\cent(\rx{1})=\prempr{2}{G}$.
\end{lem}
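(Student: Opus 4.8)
The plan is to prove the two inclusions separately. For $\supseteq$ I would note that each variable $X^{(1)}_j$ commutes in $\txx\subseteq\mpr$ with every variable of $X^{(2)},\dots,X^{(G)}$; since the centralizer of a subset of a skew field is itself a sub-skew-field, $\cent(X^{(1)})$ contains the skew subfield generated by $X^{(2)},\dots,X^{(G)}$, namely $\prempr{2}{G}$, and moreover $\cent(\rx{1})=\cent(X^{(1)})$ because the centralizer of a set equals the centralizer of the skew subfield it generates. This gives $\prempr{2}{G}\subseteq\cent(\rx{1})$. The content is the reverse inclusion, which I would obtain from Lemma \ref{l:indep}: fixing $\rrb\in\cent(\rx{1})=\cent(X^{(1)})$, it suffices to verify the level-wise independence \eqref{e:1st}.

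My first step would pin down the tensor form of the matrix evaluations of $\rrb$. Fixing a representative $r$, the expressions $rX^{(1)}_j$ and $X^{(1)}_jr$ represent the same element $\rrb X^{(1)}_j=X^{(1)}_j\rrb$ of $\mpr$, hence agree on $\mdom r$; so for every $(a,b)\in\mdom[n_1,n_2,\dots,n_G]r$ the matrix $\rrb(a,b)\in\mm{n_1}\kotimes\mm{M}$, with $M\eqcolon n_2\cdots n_G$, commutes with each $\tau_1(a_j)=a_j\otimes I_M$. Here I would invoke that, since $|X^{(1)}|>1$, the set of $a\in\mm{n_1}^{g_1}$ generating $\mm{n_1}$ as a $\kk$-algebra is a nonempty Zariski-open subset --- the standard fact about generic matrices already used in the proof of Lemma \ref{l:prime}, and exactly the place where the hypothesis $|X^{(1)}|>1$ is needed. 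For such $a$, $\rrb(a,b)$ lies in the centralizer of $\mm{n_1}\otimes I_M$ inside $\mm{n_1}\kotimes\mm{M}$, which is $I_{n_1}\otimes\mm{M}$; as these tuples are dense in $\mdom[n_1,n_2,\dots,n_G]r$ and $I_{n_1}\otimes\mm{M}$ is Zariski-closed, I conclude $\rrb(a,b)=I_{n_1}\otimes c(a,b)$ for all $(a,b)\in\mdom r$, with $c$ extracted linearly (hence regularly) from $\rrb(a,b)$.

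The second step removes the dependence on $a$. For $(a',b),(a'',b)\in\mdom[n_1,n_2,\dots,n_G]r$, Proposition \ref{p:basic}(1) gives $(a'\oplus a'',b)\in\mdom r$ with $\rrb(a'\oplus a'',b)=\rrb(a',b)\oplus\rrb(a'',b)$; combining the tensor form from the first step with the identity $(A\oplus B)\otimes C=(A\otimes C)\oplus(B\otimes C)$ --- which is compatible with the ordering of tensor factors in \eqref{e:can}--\eqref{e:tau0}, $X^{(1)}$ occupying the leftmost factor --- the matching of block decompositions forces $c(a',b)=c(a'\oplus a'',b)=c(a'',b)$, hence $\rrb(a',b)=\rrb(a'',b)$. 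Since the mp-domains of the representatives of $\rrb$ are dense Zariski-open subsets of each level set $\cM^{\ug}_{\seqn}$, this pointwise equality on each $\mdom r$ propagates to all of $\mdom\rrb$, which is exactly \eqref{e:1st}. Lemma \ref{l:indep} then gives $\rrb\in\prempr{2}{G}$, completing the proof.

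The step I expect to be the main obstacle is the first one: passing from the abstract relation ``$\rrb$ commutes with $\rx{1}$ in $\mpr$'' to pointwise commutation of matrix evaluations on a Zariski-dense set, and then upgrading the tensor form $\rrb(a,b)=I_{n_1}\otimes c(a,b)$ from algebra-generating tuples to the full domain via a closedness argument, all while making sure it is precisely $|X^{(1)}|>1$ that renders algebra-generating tuples dense (this fails for a single variable, consistent with the centralizer being larger there). The second step is essentially bookkeeping with Kronecker products; the only delicate point is checking that the block decomposition coming from Proposition \ref{p:basic}(1) is compatible with the ordering of tensor factors in the embeddings $\tau_i$.
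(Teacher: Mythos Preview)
Your argument is correct and follows a genuinely different route from the paper's. Both reduce to verifying the level-wise independence \eqref{e:1st} and then invoke Lemma~\ref{l:indep}, but they establish \eqref{e:1st} by different mechanisms. The paper fixes $b$, uses Proposition~\ref{p:matval} (after a canonical shuffle via \eqref{e:tenspi}) to write $\rrb(\cdot,b)$ as the evaluation of a matrix $\RS_b\in\Mat_n(\rx{1})$, and observes that the commutation relations force $\RS_b$ to commute with block-diagonal copies of each $X^{(1)}_j$; since the center of the free skew field $\rx{1}$ is $\kk$ by \cite[Corollary 7.9.7]{Co3} (this is where $|X^{(1)}|>1$ enters for the paper), $\RS_b$ must be a scalar matrix, hence independent of $a$. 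You bypass both Proposition~\ref{p:matval} and Cohn's centrality theorem by working pointwise: a generic tuple $a$ generates $\mm{n_1}$ when $g_1\ge2$, which forces $\rrb(a,b)\in I_{n_1}\otimes\mm{M}$ on a dense open set and hence everywhere by closedness, and then the direct-sum property of Proposition~\ref{p:basic}(1) eliminates the residual $a$-dependence of the second tensor factor. Your approach is more self-contained and stays entirely within the matrix-evaluation framework, at the cost of the density and block-compatibility bookkeeping you already flag. The one place deserving an extra sentence is the final propagation from a fixed $\mdom r$ to all of $\dom\rrb$: given $(a',b),(a'',b)\in\dom\rrb$ witnessed by possibly different representatives $r_1,r_2$, the slices $\{(\cdot,b)\}\cap\mdom[\seqn] r_i$ are nonempty Zariski-open subsets of the irreducible affine space $\mm{n_1}^{g_1}$, hence meet in some $(a''',b)$ through which the desired equality can be chained.
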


\begin{proof}
Obviously we have $\prempr{2}{G}\subseteq \cent(\rx{1})$. Conversely, assume 
$\rrb\in\cent(\rx{1})$. For $n_1\in\N$ and $b\in \cM^{\ug}_{n_2,\dots,n_G}$ let 
$n=n_2\cdots n_G$ and
$$\Omega_{b,n_1}=\left\{a\in \mm{n_1}^{g_1}\colon (a,b)\in \dom_{\seqn} \rrb\right\}.$$
If $\Omega_{b,n_1}\neq\emptyset$, then Proposition \ref{p:matval} and \eqref{e:tenspi} imply
$$\rrb(a,b)^{\mps}=K^t \RS_b(a)K$$
for some $\RS_b\in \Mat_n(\rx{1})$ and almost every $a\in \Omega_{b,n_1}$, where $K$ is the commutation matrix corresponding to the transposition of 1 and $G$. Since $\rrb X^{(1)}_j-X^{(1)}_j\rrb=0$ by assumption, we have
$$K^t \RS_b(a)K (a_j\otimes I_n)-(a_j\otimes I_n)K^t \RS_b(a)K=0,$$
which is equivalent to
$$\RS_b(a)(I_n\otimes a_j)-(I_n\otimes a_j)\RS_b(a)=0.$$
Therefore
$$\RS_b\begin{pmatrix}X^{(1)}_j&&\\&\ddots&\\&&X^{(1)}_j\end{pmatrix}-\begin{pmatrix}X^{(1)}_j&&\\&\ddots&\\&&X^{(1)}_j\end{pmatrix}\RS_b=0$$
holds. Since the center of $\rx{1}$ is $\kk$ by \cite[Corollary 7.9.7]{Co3}, we have $\RS_b\in\mm{n}$ and hence $\rrb(a',b)=\rrb(a'',b)$ for all $a',a''\in\Omega_{b,n_1}$. Therefore $\rrb\in\prempr{2}{G}$ by Lemma \ref{l:cap}.
\end{proof}

\begin{prop}\label{p:centralizer}
Let $0\le G_0\le G_1\le G$ and assume $|X^{(i)}|>1$ for $i\le G_0$ and $|X^{(i)}|=1$ for $G_0< i\le G_1$. Then
\begin{equation}\label{e:cent}
\cent(\prempr{1}{G_1})=\prempr{G_0+1}{G}.
\end{equation}
\end{prop}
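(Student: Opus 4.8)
The plan is to prove the two inclusions in \eqref{e:cent} separately, bootstrapping from the special cases already established. The inclusion $\supseteq$ is immediate: variables in $X^{(i)}$ for $i>G_1$ commute with variables in $X^{(i)}$ for $i\le G_1$ inside $\txx$, hence the corresponding mp rational functions commute in $\mpr$; and for $G_0<i\le G_1$ the single variable $X^{(i)}$ commutes with everything in $\prempr{1}{G_0}$ (again a relation holding already in $\txx$, so in $\mpr$ by Theorem \ref{t:div}), while $\prempr{G_0+1}{G}$ itself contains these central single variables $X^{(i)}$, $G_0<i\le G_1$, so it centralizes all of $\prempr{1}{G_1}$.

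For the nontrivial inclusion $\subseteq$, I would reduce to Lemma \ref{l:cent} by an induction on $G_0$, peeling off one block $X^{(i)}$ with $|X^{(i)}|>1$ at a time. Concretely, suppose $\rrb$ centralizes $\prempr{1}{G_1}$; in particular $\rrb$ centralizes the copy of $\rx{1}$ inside $\mpr$, so by Lemma \ref{l:cent} (using $|X^{(1)}|>1$) we get $\rrb\in\prempr{2}{G}$. Now the point is that the variables $X^{(i)}$ for $2\le i\le G_1$ still all commute with $X^{(1)}$, so the block structure persists after this reduction: $\rrb$, viewed as an element of the smaller skew field $\prempr{2}{G}$, still centralizes $\prempr{2}{G_1}$ there. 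By Remark \ref{r:shuffle} the roles of the indices $2,\dots,G$ are symmetric to those of $1,\dots,G-1$, so the induction hypothesis applies to $\prempr{2}{G}$ with parameters $(G_0-1,G_1-1)$ in place of $(G_0,G_1)$, yielding $\rrb\in\prempr{G_0+1}{G}$. When $G_0=0$ there are no variables $X^{(i)}$ with $|X^{(i)}|>1$ among $i\le G_1$, and the claim $\cent(\prempr{1}{G_1})\subseteq\prempr{G_1+1}{G}$ (combined with the reverse inclusion already at hand) says the centralizer of a collection of central single variables is all of $\mpr$ minus nothing — more precisely it reduces to the assertion $\cent(\prempr{1}{G_1})=\prempr{1}{G}=\mpr$ when all the relevant $X^{(i)}$ are single variables, which is then an application of Lemma \ref{l:cap} exactly as at the end of the proof of Lemma \ref{l:cent}: knowing $\rrb$ commutes with each central $X^{(i)}$ forces, via the $\RS_b$-argument and the triviality of the center of $\kk[t]$, that $\rrb$ does not genuinely depend on that coordinate, and Lemma \ref{l:indep} (in the form of Lemma \ref{l:cap}) removes it.

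I expect the main obstacle to be the bookkeeping in the inductive step: one must check carefully that "$\rrb$ centralizes $\prempr{1}{G_1}$ in $\mpr$" really does specialize to "$\rrb$ centralizes $\prempr{2}{G_1}$ in $\prempr{2}{G}$" once we know $\rrb\in\prempr{2}{G}$ — this is where the intersection Lemma \ref{l:cap} is needed to identify the ambient field correctly — and that the hypotheses on the $X^{(i)}$ are inherited with the indices shifted. One should also handle the degenerate extremes ($G_0=0$, and $G_1=G$) so that the base case of the induction genuinely matches Lemma \ref{l:cent}. None of these steps is deep, but stating the induction cleanly, so that Remark \ref{r:shuffle} can be invoked to permute indices without loss of generality, is the part that requires care.
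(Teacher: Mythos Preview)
Your inductive step is correct and uses exactly the paper's key idea: centralizing $\prempr{1}{G_1}$ forces centralizing each $\rx{i}$ for $i\le G_0$, and Lemma~\ref{l:cent} then removes the $i$th block. The paper, however, dispenses with the induction and writes the three-line argument
\[
\cent(\prempr{1}{G_1})\subseteq \bigcap_{i=1}^{G_0}\cent(\rx{i})
=\bigcap_{i=1}^{G_0} \kk\plangle \cdots \mpsymbol X^{(i-1)} \mpsymbol X^{(i+1)} \mpsymbol \cdots \prangle
=\prempr{G_0+1}{G},
\]
the equalities being Lemma~\ref{l:cent} (with Remark~\ref{r:shuffle}) and Lemma~\ref{l:cap} respectively. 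This sidesteps all the bookkeeping you flag about ambient fields and shifted indices; note in particular that the single-variable blocks $X^{(i)}$ with $G_0<i\le G_1$ play no role whatsoever in the proof of $\subseteq$.

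Your base case discussion, on the other hand, is genuinely confused. When $|X^{(i)}|=1$ the skew field $\rx{i}$ is $\kk(t)$, which is commutative, so its center is all of $\kk(t)$, not $\kk$; the $\RS_b$-argument of Lemma~\ref{l:cent} therefore gives no information here. Moreover, the argument you sketch would (if it worked) show that $\rrb$ is \emph{independent} of the coordinate $X^{(i)}$, landing $\rrb$ in $\prempr{G_1+1}{G}$ --- but the target when $G_0=0$ is $\prempr{1}{G}=\mpr$, the opposite extreme. The base case is in fact trivial: each single variable $X^{(i)}_1$ for $i\le G_1$ commutes with every generator of $\txx$, hence is central in $\mpr$, so $\prempr{1}{G_1}$ lies in the center and its centralizer is all of $\mpr$. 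In the paper's organization this case is simply the empty intersection.
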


\begin{proof}
Inclusion $\supseteq$ in \eqref{e:cent} is clear. On the other hand,
\begin{align*}
\cent(\prempr{1}{G_1})
&\subseteq \bigcap_{i=1}^{G_0}\cent(\rx{i}) \\
&=\bigcap_{i=1}^{G_0} \kk\plangle \cdots \mpsymbol X^{(i-1)} \mpsymbol X^{(i+1)} \mpsymbol \cdots \prangle \\
&=\prempr{G_0+1}{G}
\end{align*}
holds by Lemma \ref{l:cap}, Lemma \ref{l:cent} and its variants due to Remark \ref{r:shuffle}.
\end{proof}

\section{The universal property of \texorpdfstring{$\mpr$}{k(X(1) ... X(G))}}\label{sec4}

In this section we prove our main result, Theorem \ref{t:univ}: mp rational functions form the universal skew field of fractions of the multipartite free algebra, i.e., the tensor product of free algebras. This is achieved in Subsection \ref{subsec42} after we develop all the tools needed in Subsection \ref{subsec41}. Finally, in Subsection \ref{subsec43} we show that a rational expression vanishes on a multipartite variety of $\mm{n}$ for all $n\in\N$ if and only if it vanishes on the corresponding multipartite variety of every skew field.

\subsection{Rational expressions over a skew field}\label{subsec41}

The main aim of this subsection is to derive the tools needed for proving the universality of $\mpr$. However, some of the results are interesting in their own right. Let $D$ be an arbitrary skew field whose center contains $\kk$ and let $Z=\{Z_1,\dots,Z_g\}$ a set of freely noncommuting variables. The $\kk$-algebra $\pz=D\kotimes \kk\Langle Z\Rangle$ is called the {\bf free $D$-ring on $Z$}. By \cite[Corollary 2.5.2]{Co3}, $\pz$ is a fir and its universal skew field of fractions is denoted $\rz$. An alternative and explicit construction of this skew field via matrix evaluations is stated as Corollary \ref{c:rz}.

\begin{rem}\label{r:dt}
For later reference we recall some classical facts. The $\kk$-algebra $D[t]=D\kotimes \kk[t]$ is an Ore domain \cite[Theorem 1.2.9(iv) and Theorem 2.1.15]{MR} and thus has a classical ring of quotients $D(t)$, i.e., every element in $D(t)$ is of the form $p^{-1}q$ for some $p\in D[t]\setminus\{0\}$ and $q\in D[t]$. Also, since $\kk$ is infinite, a Vandermonde matrix argument \cite[Proposition 2.3.27]{Row} implies that $p\neq0$ if and only if $p(\alpha)\neq0$ holds for almost every $\alpha\in\kk$. Lastly, there is a valuation $v:D(t)\to\Z\cup\{\infty\}$ defined by $v(0)= \infty$ and $v(p^{-1}q)= \deg(p)-\deg(q)$ for $p,q\in D[t]\setminus\{0\}$; see e.g. \cite[Section 9.1]{Co1}.
\end{rem}

We start by proving some technical results. In general, the tensor product of two skew fields is not necessarily a domain \cite{RS}; however, we will show that the tensor product of $D$ with a generic division algebra $\ud{m}(z)$ (where $z$ is a tuple of generic $m\times m$ matrices) embeds into a skew field. At the heart of the next proof is a construction of a generalized cyclic division algebra (cf.~\cite[Section 1.4]{Jac}).

\begin{lem}\label{l:aux1}
Let $m\in \N$. Then there exists a cyclic algebra $\cA$ of degree $m$ whose center contains $\kk$ such that $D\kotimes \cA$ is a skew field.
\end{lem}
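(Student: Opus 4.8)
The plan is to construct $\cA$ as a \emph{generalized cyclic division algebra} in the style of \cite[Section~1.4]{Jac}: starting from a cyclic field extension of (an extension of) $\kk$, one passes to a twisted polynomial ring and kills a suitable element $t^m-c$, and one arranges the data so that tensoring with $D$ produces an algebra of the same shape that is still a division ring.

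First I would fix a cyclic field extension $L$ of degree $m$ with cyclic Galois group $\langle\sigma\rangle$, chosen linearly disjoint over $\kk$ from the algebraic elements of $D$; then the minimal polynomial of a primitive element remains irreducible over $D$, so $D':=D\kotimes L$ is again a skew field, and $\sigma$ extends to an automorphism $\tilde\sigma$ of $D'$ of order $m$ fixing $D$ pointwise. Taking $c$ in the $\tilde\sigma$-fixed part of the centre of $D'$, set
$$\cA \;=\; L[t;\sigma]/(t^m-c),\qquad t^m=c,\ \ ta=\sigma(a)t,$$
a cyclic algebra of degree $m$ whose centre contains $\kk$; unwinding the definitions one has the isomorphism
$$D\kotimes\cA \;\cong\; D'[t;\tilde\sigma]/(t^m-c),$$
the generalized cyclic algebra attached to the triple $(D',\tilde\sigma,c)$.

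It remains to choose $c$ so that $t^m-c$ is irreducible over $D'[t;\tilde\sigma]$ --- equivalently, by the Albert criterion, so that neither $c$ nor any $c^{m/p}$ with $p\mid m$ prime is a relative norm from $D'$ to $(D')^{\tilde\sigma}$. Here I would use Remark~\ref{r:dt}: adjoining a central transcendental $\theta$ to $D'$ equips it with a valuation $v$, and a $c$ with $v(c)$ coprime to $m$ makes $t^m-c$ Eisenstein for the extension of $v$ determined by $v(t)=v(c)/m$; the same valuation, extended to $D\kotimes\cA$, takes only finite values on nonzero elements, so $D\kotimes\cA$ has no zero divisors, and being module-finite over the skew field $D'$ it is then a division ring. \textbf{This valuation/norm step is the main obstacle}: one has to check that replacing the commutative field $L$ by the possibly infinite-dimensional skew field $D'$ does not enlarge the relative norm group so much that no admissible $c$ survives --- which is precisely what the genericity engineered into $L$, $c$ and $\theta$ (through Remark~\ref{r:dt}) is there to guarantee.
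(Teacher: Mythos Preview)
Your overall plan---build $\cA$ as a generalized cyclic algebra and verify irreducibility of $t^m-c$ by an Eisenstein/valuation argument---is exactly the paper's strategy, and the identification $D\kotimes\cA\cong D'[t;\tilde\sigma]/(t^m-c)$ is the right reduction. But there is a genuine gap in your first step.

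You want a cyclic field extension $L/\kk$ of degree $m$ that is linearly disjoint over $\kk$ from the algebraic elements of $D$. Such an $L$ need not exist. Nothing in the hypotheses prevents $D$ from containing an algebraic closure of $\kk$ (for instance, $D$ could be the free skew field over $\overline{\kk}$, or any skew field with center $\overline{\kk}$); in that case \emph{every} nontrivial algebraic extension of $\kk$ meets $D$, and your linear-disjointness requirement is vacuously unsatisfiable. Even before that, a cyclic extension of $\kk$ of degree $m$ may simply fail to exist. So the construction stalls at the very choice of $L$, and the later ``genericity engineered into $L$, $c$ and $\theta$'' cannot repair this, because the transcendental $\theta$ enters only after $L$ is already fixed and only through the centre of $D'$, not through $L$ itself.

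The paper avoids this by making the cyclic extension \emph{transcendental} from the outset: it adjoins independent indeterminates $t_0,\dots,t_{m-1},t$ and lets $\sigma$ cyclically permute $t_0,\dots,t_{m-1}$, so that for any skew field $E\supseteq\kk$ one has a uniform construction $\cA(E,m)=\tilde E[u;\sigma]/(u^m-t)$ with $\tilde E=E(t_0,\dots,t_{m-1},t)$. Because the data are purely transcendental over $\kk$, one gets $\cA(D,m)=D\kotimes\cA(\kk,m)$ on the nose, and the $t$-adic valuation on $\tilde E$ makes $u^m-t$ Eisenstein (via \cite{GMR}) regardless of what algebraic elements $D$ contains. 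If you rewrite your argument with this transcendental cyclic extension in place of an algebraic $L$, the rest of your outline goes through.
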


\begin{proof}
Let $E$ be a skew field whose center contains $\kk$ and consider $\tilde{E}=E(t_0,\dots,t_{m-1},t)$ for algebraically independent commutative symbols $t_0,\dots,t_{m-1},t$. Let $\sigma:\tilde{E}\to \tilde{E}$ be the automorphism determined by
$$\sigma|_E=\id_E,\quad \sigma t=t,\quad \sigma t_0=t_1,\quad \sigma t_1=t_2,\quad \dots, \quad \sigma t_{m-1}=t_0.$$
The ring $\tilde{E}[u;\sigma]$ is a principal right ideal domain by \cite[Theorem 1.2.9(ii)]{MR}. Consider 
the central element
$u^m-t\in \tilde{E}[u;\sigma]$.
We shall use the Eisenstein criterion 
\cite{GMR} for skew polynomial rings over division algebras
to show it is  irreducible. If $m\ge2$, let $v$ be the $t$-adic valuation on $\tilde{E}=E(t_0,\dots,t_{m-1})(t)$ as in Remark \ref{r:dt}. Then it is easy to verify that $v$ extends to a valuation
$$\hat{v}:\tilde{E}[u;\sigma]\to\Z\cup\{\infty\},\qquad
\hat{v}\left(\sum_{i=0}^n a_iu^i\right)=\min\{v(a_i)-i\colon 0\le i\le n\}.$$

One can now verify that this setting satisfies the conditions of \cite[Theorem 38]{GMR} (with left key polynomial $u$), hence $u^m-t$ is irreducible. Therefore the two-sided ideal $(u^m-t)\subset \tilde{E}[u;\sigma]$ is maximal as a one-sided ideal, so the quotient ring
$$\cA(E,m)\eqcolon\tilde{E}[u;\sigma]/(u^m-t)$$
is a skew field.

Our statement now follows since
$$\cA(D,m)=D\kotimes \cA(\kk,m)$$
and the cyclic algebra $\cA(\kk,m)$ is of degree $m$ (see e.g. the proof of \cite[Proposition 3.1.46]{Row}).
\end{proof}

\begin{lem}\label{l:aux2}
Let $\cA$ be a $\kk$-algebra such that $D\kotimes \cA$ is a domain and let $p_1,p_2\in \pz$ be such that $p_1(a)p_2(a)=0$ for every $a\in \cA^g$. Then $p_1(a)=0$ for all $a\in \cA^g$ or $p_2(a)=0$ for all $a\in \cA^g$.
\end{lem}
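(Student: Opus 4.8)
The plan is to reduce the statement about products in $\pz$ to the fact that $D\kotimes\cA$ is a domain, by observing that the hypothesis $p_1(a)p_2(a)=0$ for all $a\in\cA^g$ is a Zariski-type vanishing condition that can be detected through a single "generic" evaluation. First I would note that $\pz = D\kotimes\kk\Langle Z\Rangle$ is a free $D$-ring, so each $p_i$ is a $D$-polynomial in the noncommuting variables $Z_1,\dots,Z_g$, i.e.\ a finite $D$-linear combination of words in $Z$. The key reduction: evaluations $a\in\cA^g$ factor through the universal one. Concretely, consider the $\kk$-algebra homomorphism $\pz\to (D\kotimes\cA)\langle \text{something}\rangle$; more usefully, pick for each word $w$ in $Z$ a fresh set of "coordinates" and work inside a polynomial extension. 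The cleanest route is: since $\cA$ contains (a copy of) the free $\kk$-algebra's image in a controlled way is too strong an assumption, so instead I would argue directly as follows.

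Suppose neither conclusion holds: there exist $a,b\in\cA^g$ with $p_1(a)\ne 0$ and $p_2(b)\ne 0$. The idea is to build a single tuple $c\in\cA^g$ — or rather, in a polynomial extension of $\cA$ over $\kk$ that still embeds in a domain of the same shape — witnessing $p_1(c)p_2(c)\ne 0$, contradicting the hypothesis. I would introduce a commuting indeterminate $t$ and set $c_i = a_i\otimes 1 + t\cdot(b_i - a_i)\otimes 1$ inside $D\kotimes(\cA\kotimes\kk[t])$, or more simply work in $\cA\kotimes\kk[t]$ and note $D\kotimes(\cA\kotimes\kk[t]) = (D\kotimes\cA)\kotimes\kk[t]$ is again a domain (polynomial ring over a domain). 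Then $p_1(c)$ and $p_2(c)$ are elements of $(D\kotimes\cA)[t]$, polynomials in $t$ whose values at $t=0$ are $p_1(a)\otimes 1\ne 0$ and, at $t=1$, specialize $b$-style; in any case both are nonzero polynomials in $t$. Since $(D\kotimes\cA)[t]$ is a domain, $p_1(c)p_2(c)\ne 0$ in $(D\kotimes\cA)[t]$, so it is a nonzero polynomial in $t$, hence nonzero for some scalar specialization $t\mapsto\alpha\in\kk$ (using that $\kk$ is infinite, via the Vandermonde argument recalled in Remark \ref{r:dt}). That specialization yields a genuine tuple in $\cA^g$ on which $p_1 p_2$ does not vanish — the desired contradiction.

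The one subtlety to get right is the interpolation step: from $p_1(a)\ne 0$ and $p_2(b)\ne 0$ for possibly \emph{different} tuples $a,b$, I need a single tuple on which \emph{both} are nonzero, and the linear path $c_i = (1-t)a_i + t b_i$ achieves exactly this since $p_1(c)|_{t=0}=p_1(a)\ne0$ and $p_2(c)|_{t=1}=p_2(b)\ne0$ force both $p_1(c)$ and $p_2(c)$ to be nonzero in $(D\kotimes\cA)[t]$; then the domain property of $(D\kotimes\cA)[t]$ finishes it, and infiniteness of $\kk$ produces the scalar point in $\cA^g$. I expect the main obstacle — really the only point requiring care — is confirming that $p_i(c)$, expanded in powers of $t$, lands in $(D\kotimes\cA)[t]$ with the claimed evaluations at $t=0,1$; this is immediate once one writes $p_i$ as a $D$-linear combination of words and uses multilinearity of word-substitution, but it should be spelled out so that the reduction "product nonzero over the domain $\Rightarrow$ nonzero at some $\kk$-point" is clearly justified.
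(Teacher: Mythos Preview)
Your proposal is correct and, once you settle on the linear-path argument in the second paragraph, it is essentially identical to the paper's proof: both introduce $c_i=(1-t)a_i+tb_i$, observe that $p_1(c),p_2(c)\in (D\kotimes\cA)[t]$ are nonzero by checking $t=0,1$, and then combine the domain property of $(D\kotimes\cA)[t]$ with the infiniteness of $\kk$ (Remark \ref{r:dt}) to reach a contradiction. The only difference is the order in which the domain property and the Vandermonde step are invoked; the paper first uses Vandermonde to pass from $q_1(\alpha)q_2(\alpha)=0$ for all $\alpha\in\kk$ to $q_1q_2=0$ in $(D\kotimes\cA)[t]$ and then the domain property, whereas you use the domain property first and Vandermonde second.
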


\begin{proof}
Assume this is not the case, i.e., there exist $b_1,b_2\in\cA^g$ such that $p_1(b_1)\neq0$ and $p_2(b_2)\neq0$. Now consider
$$q_i(t)=p_i((1-t)b_1+tb_2) \in (D\kotimes \cA)[t];$$
because $q_1(\alpha)q_2(\alpha)=0$ for every $\alpha\in\kk$ and $\kk$ is infinite, we have $q_1(t)q_2(t)=0$ by Remark \ref{r:dt}. Since $(D\kotimes \cA)[t]$ is a domain, we have $q_1(t)=0$ or $q_2(t)=0$. But $q_1(0)\neq0$ and $q_2(1)\neq0$, a contradiction.
\end{proof}

For a given $m\in\N$ denote
$$\kk[\xi]=\kk[\xi_{i\iimath\jjmath}:1\le i\le g, 1\le \iimath,\jjmath \le m]$$
and let $z_i=(\xi_{i\iimath\jjmath})_{\iimath\jjmath}$ be generic $m\times m$ matrices.

\begin{prop}\label{p:tensgen}
$D\kotimes \ud{m}(z)\subseteq \Mat_m(D(\xi))$ is a Noetherian domain. In particular, $D\kotimes \gm{m}(z)$ has a classical ring of quotients 
$\ud{m}(D;z)\subseteq \Mat_m(D(\xi))$. If $E\supseteq D$ is another skew field, then $\ud{m}(D;z)$ embeds into $\ud{m}(E;z)$ and the diagram
\begin{center}
	\begin{tikzpicture}[scale=1]
	\node (A) at (0,1.5) {$D$};
	\node (B) at (3,1.5) {$E$};
	\node (C) at (0,0) {$\ud{m}(D;z)$};
	\node (D) at (3,0) {$\ud{m}(E;z)$};
	\path[->]
	(A) edge (B)
	(A) edge (C)
	(B) edge (D)
	(C) edge (D);
	\end{tikzpicture}
\end{center}
commutes.
\end{prop}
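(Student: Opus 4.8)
The plan is to proceed in four moves: show that $D\kotimes\gm{m}(z)$ is a domain; deduce that the central localization $D\kotimes\ud{m}(z)$ is a Noetherian domain contained in $\Mat_m(D(\xi))$; construct the skew field $\ud{m}(D;z)$ inside $\Mat_m(D(\xi))$; and check functoriality in $D$. Throughout I will use that $D[\xi]:=D\kotimes\kk[\xi]$ is a Noetherian Ore domain with classical ring of quotients $D(\xi)$ (iterate Remark \ref{r:dt}), so that $\Mat_m(D[\xi])$ is a prime Noetherian ring whose Goldie ring of quotients is the simple Artinian ring $\Mat_m(D(\xi))$.

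For the domain property, fix by Lemma \ref{l:aux1} a cyclic (hence simple) $\kk$-algebra $\cA$ of degree $m$, with center containing $\kk$, such that $D\kotimes\cA$ is a skew field. Since $\gm{m}(z)\subseteq\Mat_m(\kk[\xi])$ and $D$ is a $\kk$-vector space, tensoring yields an embedding $D\kotimes\gm{m}(z)\hookrightarrow\Mat_m(D[\xi])$; in particular $\gm{m}(z)$ itself is a domain, lying in the skew field $\ud{m}(z)$ (Proposition \ref{p:ud} with $G=1$). Now suppose $uv=0$ for some nonzero $u,v\in D\kotimes\gm{m}(z)$, and write $u=\sum_i d_i\otimes p_i$, $v=\sum_j e_j\otimes q_j$ with $\{d_i\}$ and $\{e_j\}$ linearly independent over $\kk$, so that $p_{i_0}\neq0\neq q_{j_0}$ for suitable indices and therefore $p_{i_0}q_{j_0}\neq0$ in $\gm{m}(z)$. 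By Proposition \ref{p:tens} (with $G=1$ and $\cA_1=\cA$) there is a homomorphism $\varphi\colon\gm{m}(z)\to\cA$ with $\varphi(p_{i_0}q_{j_0})\neq0$, hence $\varphi(p_{i_0})\neq0\neq\varphi(q_{j_0})$; applying $\id_D\otimes\varphi$ and using the linear independence of the $d_i$ and of the $e_j$ shows that the images of $u$ and $v$ in the skew field $D\kotimes\cA$ are nonzero, contradicting $uv=0$. Hence $D\kotimes\gm{m}(z)$ is a domain.

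Write $Z$ for the center of $\gm{m}(z)$, which acts by scalar matrices $cI_m$ with $c\in\kk[\xi]$, and $K=\operatorname{Frac}(Z)$ for the center of $\ud{m}(z)$. Since $\ud{m}(z)=(Z\setminus\{0\})^{-1}\gm{m}(z)$, the algebra $D\kotimes\ud{m}(z)$ is the central localization $(Z\setminus\{0\})^{-1}(D\kotimes\gm{m}(z))$, hence again a domain, and inverting the units $cI_m$ ($c\in Z\setminus\{0\}$) of $\Mat_m(D(\xi))$ extends the embedding above to $D\kotimes\ud{m}(z)\hookrightarrow\Mat_m(D(\xi))$. For Noetherianity: $\ud{m}(z)$ is a simple algebra of finite degree (cf.\ the discussion preceding Proposition \ref{p:ud}), so it is finite over $K$; the field $K$ is a finitely generated extension of $\kk$, and choosing a finitely generated $\kk$-subalgebra $R_0\subseteq K$ with $\operatorname{Frac}(R_0)=K$ exhibits $D\kotimes R_0$ as a homomorphic image of a polynomial ring over $D$ in central variables, so $D\kotimes R_0$ is Noetherian by Hilbert's basis theorem; hence $D\kotimes K=(R_0\setminus\{0\})^{-1}(D\kotimes R_0)$ is Noetherian, and $D\kotimes\ud{m}(z)$, being a free module of finite rank over $D\kotimes K$, is Noetherian as well.

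A Noetherian domain is Ore, so $D\kotimes\ud{m}(z)$ has a classical ring of quotients; clearing central denominators shows that $D\kotimes\gm{m}(z)$ is Ore too, with the same classical ring of quotients, which I denote $\ud{m}(D;z)$. The crucial point is to realize $\ud{m}(D;z)$ inside $\Mat_m(D(\xi))$, that is, to show every nonzero $u\in D\kotimes\gm{m}(z)$ is invertible there; this is not automatic, since $\Mat_m(D(\xi))$ is not a skew field. The decisive observation is that $\Mat_m(\kk(\xi))$ is a vector space over the skew field $\ud{m}(z)$, hence a free $\ud{m}(z)$-module; restricting scalars, it is a direct sum of copies of $\ud{m}(z)$ on which the subring $\gm{m}(z)$ acts componentwise, and tensoring with $D$ over $\kk$ makes it a direct sum of copies of the domain $D\kotimes\ud{m}(z)$ with $D\kotimes\gm{m}(z)$ acting componentwise. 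Thus the submodule $\Mat_m(D[\xi])=D\kotimes\Mat_m(\kk[\xi])$ is a torsion-free left, and symmetrically right, $D\kotimes\gm{m}(z)$-module, so a nonzero $u$ is a regular element of the prime Noetherian ring $\Mat_m(D[\xi])$ and therefore a unit in its Goldie ring of quotients $\Mat_m(D(\xi))$. Consequently the elements $s^{-1}t$ with $s,t\in D\kotimes\gm{m}(z)$, $s\neq0$, form a subring of $\Mat_m(D(\xi))$ isomorphic to $\ud{m}(D;z)$. This torsion-freeness step is the main obstacle: it rests on the freeness of $\Mat_m(\kk(\xi))$ over $\ud{m}(z)$ persisting after tensoring with $D$ over $\kk$, whereas everything around it is routine once the domain property is available.

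Finally, if $D\hookrightarrow E$ is an embedding into a skew field, then $D[\xi]\hookrightarrow E[\xi]$ extends to $D(\xi)\hookrightarrow E(\xi)$, hence to $\Mat_m(D(\xi))\hookrightarrow\Mat_m(E(\xi))$, compatibly with $D\kotimes\gm{m}(z)\hookrightarrow E\kotimes\gm{m}(z)$. Passing to the subrings of fractions gives an embedding $\ud{m}(D;z)\hookrightarrow\ud{m}(E;z)$, and the square in the statement commutes because all four of its arrows are restrictions of $\Mat_m(D(\xi))\hookrightarrow\Mat_m(E(\xi))$ together with the inclusions of $D$ and $E$ as scalar matrices.
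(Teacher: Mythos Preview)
Your proof is correct and follows the paper's overall architecture: use Lemma~\ref{l:aux1} to produce a cyclic algebra $\cA$ with $D\kotimes\cA$ a skew field, deduce that $D\kotimes\gm{m}(z)$ is a domain, establish Noetherianity of $D\kotimes\ud{m}(z)$ via finiteness over $D\kotimes K$, and pass to Ore quotients. The variations are worth noting. For the domain step, the paper pushes $p_1,p_2$ through to $\cA$ and invokes Lemma~\ref{l:aux2} (the line-in-parameter-space argument) to force one of them to vanish identically; you instead exploit that $\gm{m}(z)$ is already a domain, pick a single index pair with $p_{i_0}q_{j_0}\neq0$, and use Proposition~\ref{p:tens} to find one evaluation point in $\cA$ where both factors survive---this bypasses Lemma~\ref{l:aux2} entirely and is a genuine simplification. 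For Noetherianity you reprove the content of the cited result of Resco--Small--Wadsworth by hand via Hilbert's basis theorem and localization, which is fine. Finally, you supply something the paper leaves implicit: an explicit argument that nonzero elements of $D\kotimes\gm{m}(z)$ are regular in $\Mat_m(D[\xi])$ (via torsion-freeness inherited from the free $\ud{m}(z)$-module structure on $\Mat_m(\kk(\xi))$ after tensoring with $D$), hence units in $\Mat_m(D(\xi))$, so that $\ud{m}(D;z)$ genuinely sits inside $\Mat_m(D(\xi))$ as asserted.
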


\begin{proof}
Let $K$ be the center of $\ud{m}(z)$. Then $D\kotimes K$ is a Noetherian $\kk$-algebra by \cite[Theorem 3]{RSW} because $K$ is an intermediate field extension of $\kk(\xi)/\kk$ and therefore a finitely generated field extension of $\kk$. Since $\ud{m}(z)$ is finite-dimensional over $K$, $D\kotimes \ud{m}(z)$ is a finitely generated module over $D\kotimes K$, so $D\kotimes \ud{m}(z)$ is a Noetherian $\kk$-algebra by \cite[Lemma 1.1.3]{MR}.

Next we need to prove that $D\kotimes \ud{m}(z)$ is a domain. Since $\ud{m}(z)$ is the ring of central quotients of $\gm{m}(z)$, it is enough to prove that $D\kotimes \gm{m}(z)$ is a domain. Let $p_1,p_2\in \pz$ and assume that $\hat{p}_1\hat{p}_2=0$ holds for their canonical images $\hat{p}_1,\hat{p}_2\in D\kotimes \gm{m}(z)$. Let $\cA$ be a cyclic algebra of degree $m$ as in Lemma \ref{l:aux1}. For every $a\in\cA^g$ we have a homomorphism $\gm{m}(z)\to \cA$ given by $z\mapsto a$; therefore $p_1(a)p_2(a)=0$ for every $a\in\cA^g$. Lemma \ref{l:aux2} then without loss of generality implies $p_1(a)=0$ for all $a\in\cA^g$. Since
$$(D\kotimes \cA)\Cotimes \bC \cong D\kotimes \Mat_m(\bC),$$
where $\bC$ is the algebraic closure of the center $C$ of $\cA$, we have $\hat{p}_1=0$.

The second part of the proposition follows from the first part because every Noetherian domain is an Ore domain and $\ud{m}(z)$ is the ring of central quotients of $\gm{m}(z)$. Finally, the third part follows from the observation that the embedding $D\hookrightarrow E$ extends to the natural embedding $D\kotimes \gm{m}(z)\hookrightarrow E\kotimes \gm{m}(z)$.
\end{proof}

\def\pzz{\widehat{D}\Langle Z \Rangle}
\def\rzz{\widehat{D}\plangle Z \prangle}
\def\ppz{\widehat{D}\Langle\!\Langle Z \Rangle\!\!\Rangle}

Similarly to $\cR_{\kk}(Z)$, let $\rez$ be the set of all formal rational expressions built from $D$ and $Z$. A priori, these expressions are just combinations of symbols without relations; to study $\rz$ with them, where variables $Z$ commute with elements of $D$., we will consider their evaluations that model these commutativity relations (unlike e.g. \cite[Section 8.2]{Row} where evaluations without commutativity restrictions are considered).
As before, we have a notion of the inversion height of an expression $r\in\rez$. If $a\in\mm{m}^g$, then every $p\in\pz$ yields $$p(a)\in D\kotimes\mm{m}=\Mat_m(D).$$
This evaluation can be extended to rational expressions in a natural way. Each expression of height 0 yields an element of $\pz$ which can be evaluated as above, and evaluations of expressions of higher degree are then defined recursively. If $r\in\rez$ is defined at $a\in\mm{m}^g$, then we have $r(a)\in \Mat_m(D)$; if $r(a)$ is moreover invertible in $\Mat_m(D)$, then $r^{-1}\in\rez$ is defined at $a$. Note that $r$ is either not defined on $\mm{m}^g$ or defined in almost every point in $\mm{m}^g$ by Remark \ref{r:dt}. We call $r$ {\bf non-degenerate} if $r$ is defined at some $a\in\mm{m}^g$.

\begin{prop}\label{p:idD}
Let $r\in\rez$ and assume it vanishes on $\bigcup_m\mm{m}^g$. Then $r$ represents $0$ in $\rz$.
\end{prop}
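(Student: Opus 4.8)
The plan is to reduce the statement to the non‑invertibility over $\rz$ of a single matrix over $\pz$ obtained from $r$ by linearization, and then to detect that non‑invertibility by matrix evaluations over $\kk$, with Proposition~\ref{p:tensgen} serving to carry the coefficient skew field $D$ along through the specializations.

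First I would dispose of the degenerate case. If some innermost inverted subexpression $v$ of $r$ represented $0$ in $\rz$, then, running $v$ through the \emph{skew field} $\ud{m}(D;z)$ of Proposition~\ref{p:tensgen} — using that the evaluation $\phi_m\colon\pz\to D\kotimes\gm{m}(z)\hookrightarrow\ud{m}(D;z)$, $Z_i\mapsto z_i$, extends to a local homomorphism of $\rz$ by the universal property, together with the density in Remark~\ref{r:dt} — one sees that $v$ already vanishes on $\bigcup_m\mm{m}^g$; but then $v^{-1}$, hence $r$, is defined nowhere on $\bigcup_m\mm{m}^g$, contradicting that $r$ vanishes there (which by our conventions presupposes a nonempty matrix domain). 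So $r$ is nondegenerate over $\rz$ and represents some $\rho\in\rz$. By a standard linearization (Higman's trick, carried out over $D$; cf.\ \cite{Co1}) there are $d\ge1$, vectors $u,v\in D^d$, and a matrix $L=L_0+\sum_i Z_iL_i$ with $L_i\in\Mat_d(D)$ such that $L$ is invertible over $\rz$, such that $L(a)$ is invertible over $\Mat_{dm}(D)$ whenever $r$ is defined at $a\in\mm{m}^g$, and such that $\rho=u^{\T}L^{-1}v$ and $r(a)=u^{\T}L(a)^{-1}v$ at such $a$. Setting $\widehat L=\left(\begin{smallmatrix}L&v\\u^{\T}&0\end{smallmatrix}\right)$ over $\pz$, the Schur complement of the invertible block $L$ in $\widehat L$ is $-u^{\T}L^{-1}v=-\rho$; hence $\rho=0$ if and only if $\widehat L$ is not invertible over $\rz$, and $r(a)=0$ if and only if $\widehat L(a)$ is not invertible, at any $a$ where $L(a)$ is invertible.

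The core of the argument is the claim that a linear matrix $N$ over $\pz$ which is invertible over $\rz$ is already invertible at some point of $\bigcup_m\mm{m}^g$. Granting it, we finish: assume $\rho\neq0$; then $L$ and $\widehat L$, and hence $L\oplus\widehat L$, are invertible over $\rz$, so by the claim there is a single $a\in\mm{m}^g$ at which $L(a)$ and $\widehat L(a)$ are both invertible; then $r$ is defined at $a$ and, by the Schur complement, $r(a)=u^{\T}L(a)^{-1}v\neq0$, contradicting the hypothesis. Therefore $\rho=0$, i.e.\ $r$ represents $0$ in $\rz$.

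It remains to prove the claim. Since $\pz$ is a fir, a matrix over $\pz$ is invertible over $\rz$ if and only if it is full over $\pz$; and by Remark~\ref{r:another}(2) it suffices to show that a full linear matrix $N$ becomes invertible under the evaluation $\phi_m\colon\pz\to D\kotimes\gm{m}(z)\hookrightarrow\ud{m}(D;z)$, $Z_i\mapsto z_i$, for some $m$. Indeed, Proposition~\ref{p:tensgen} guarantees that $\ud{m}(D;z)$ is a skew field, so if $\phi_m(N)$ is invertible then $N(z_m)$ is invertible over $\Mat_{dm}(D(\xi))$, and then, clearing denominators as in Remark~\ref{r:dt}, $N(a)$ is invertible for some $a\in\mm{m}^g$. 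In contrapositive form the task is: if $N(z_m)$ is singular for every $m$, produce a factorization $N=PQ$ over $\pz$ with inner dimension $<d$. For $D=\kk$ this is the classical fact that a full linear matrix over a free algebra is nonsingular at generic matrices of some size; the passage to an arbitrary coefficient skew field $D$ is where I expect the real difficulty, and I would handle it with the cyclic division algebras $\cA$ of Lemma~\ref{l:aux1} — so that each $D\kotimes\cA$ is a skew field through which the relevant evaluations of $\pz$ factor — together with Lemma~\ref{l:aux2}, which keeps these evaluations densely defined, thereby reducing the relative statement over $D$ to the absolute one over $\kk$. This transfer is the main obstacle: it is exactly where one must compensate for $\pz$ failing to be a Sylvester domain, and where Proposition~\ref{p:tensgen} and the cyclic‑algebra constructions of this subsection are indispensable; everything else is bookkeeping with linearization, Schur complements, and Zariski density.
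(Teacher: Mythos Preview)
Your reduction via linearization and Schur complements is a genuinely different route from the paper's. The paper instead shifts the base point: it passes from $D$ to $\hat D=\ud{m}(D;z)$ (using Proposition~\ref{p:tensgen} to know this is a skew field), replaces $r$ by $\hat r(Z)=r(z+Z)$, which is defined at $0$, expands $\hat r$ as a formal power series in the $(Z)$-adic completion $\hat D\Langle\!\Langle Z\Rangle\!\Rangle$, and kills the series term by term by a staircase-of-matrix-units argument on the homogeneous components (each of which vanishes on all matrix tuples by the hypothesis). Honesty of the completion map then pulls vanishing back to $\hat D\plangle Z\prangle$, and a local-homomorphism argument pushes it down to $\rz$. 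No fullness or inner-rank considerations enter.

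The gap in your proposal is that you have not proved the ``core claim'': that a full linear matrix $N$ over $\pz$ is invertible at some matrix point. You reduce everything to this claim correctly, but then only gesture at a proof. In fact this claim is exactly Corollary~\ref{c:rzmat} (restricted to linear matrices), which in the paper is \emph{derived from} Proposition~\ref{p:idD}, not used to prove it. Your proposed independent argument via Lemmas~\ref{l:aux1} and~\ref{l:aux2} does not work as sketched: those lemmas control \emph{zero-divisors} in $D\kotimes\cA$ --- precisely what is needed to show $D\kotimes\gm{m}(z)$ is a domain in Proposition~\ref{p:tensgen} --- but they say nothing about fullness of matrices over $\pz$ or about the rank of $N(z_m)$ over $\ud{m}(D;z)$. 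There is no evident mechanism by which ``$N(z_m)$ singular for all $m$'' together with ``$D\kotimes\cA$ a skew field for cyclic $\cA$'' produces a rank factorization $N=PQ$ over $\pz$; the sentence ``thereby reducing the relative statement over $D$ to the absolute one over $\kk$'' hides the entire difficulty. Until you supply an actual argument here that does not presuppose the proposition itself, the proof is incomplete.
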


\begin{proof}
By assumption, $r$ is defined at some point in $\mm{m}^g$. Hence it is also defined at the tuple of generic matrices $z$ from $\gm{m}(z)\subseteq \ud{m}(D;z)=\widehat{D}$. Since the latter is a skew field, $r$ indeed represents an element of $\rz$. Let $\hat{r}(Z)=r(Z+z)$ be a rational expression over $\widehat{D}$; since it is defined at 0, it represents an element of $\rzz$.

Observe that $r$ represents 0 if $\hat{r}$ represents 0. Indeed: consider the homomorphism $\phi:\gm{m}(z)\kotimes \pz\to \pz$ determined by $\phi(z)=0$ and $\phi|_{\pz}=\id_{\pz}$. Since $\rzz$ is a skew field of fractions of $\gm{m}(z)\kotimes \pz$, there exists a subring $\gm{m}(z)\kotimes \pz\subseteq L\subset\rzz$ maximal with the property that $\phi$ extends to a homomorphism $\varphi:L\to \rz$. By induction on the inversion height of $r$ we see that $\hat{r}\in L$ and $\varphi(\hat{r})=r$, so $r\neq0$ implies $\hat{r}\neq0$.

If $r$ is defined at $a\otimes I_m+I_n\otimes b$ for $a\in\mm{n}^g$ and $b\in\mm{m}^g$, then the definition of evaluation of rational expressions over skew fields on tuples of matrices implies
\begin{equation}\label{e:e1}
\hat{r}(a)=r(a\otimes I_m+I_n\otimes z)=0,
\end{equation}
where the second equality holds by assumption.

Let $\ppz$ be the $(Z)$-adic completion of $\pzz$. Since $\ppz$ is a semifir by \cite[Theorem 5.4.5]{Co1} and the embedding $\pzz\to \ppz$ is honest by \cite[Proposition 6.2.2]{Co1}, it extends to an embedding of $\rzz$ into the universal skew field of $\ppz$ by \cite[Theorem 4.5.10]{Co1}. Since $\hat{r}$ is defined at 0, it can be expanded into a series $S\in\ppz$, and $\hat{r}$ represents 0 if $S=0$ by what we just observed.

Let $S=\sum_{w\in \,\Langle Z\Rangle} c_ww$ for $c_w\in\widehat{D}$. If $\hat{r}$ is defined at $a\in\mm{n}^g$, then
$$\frac{\der}{\der t^h} \hat{r}\big(ta\big)\Big|_{t=0}=h!\sum_{|w|=h} c_ww(a)$$
for every $h\in\N\cup\{0\}$. Let $p_h=\sum_{|w|=h} c_ww\in \pzz$. By \eqref{e:e1} and a density argument we see that $p_h(a)=0$ for every $a\in\mm{n}^g$ and $n\in\N$. As in the proof of \cite[Lemma 1.4.3]{Row}, we can use a ``staircase'' of standard matrix units to show that $p_h=0$. Hence $S=0$ and thus $r$ represents 0 in $\rz$.
\end{proof}

The above results yield the following characterization of $\rz$ in terms of evaluations on matrices over $\kk$. We introduce an equivalence relation on the set of all non-degenerate rational expressions in $\rez$ as in \eqref{e:rel}. The set of equivalence classes $\cE$ is a ring under natural operations.

\begin{cor}\label{c:rz}
Let $D\supseteq \kk$ be a skew field.	
\begin{enumerate}[(a)]
\item Let $r\in\rez$. Then $r$ represents an element in $\rz$ if and only if $r$ is non-degenerate.
\item $\cE$ is isomorphic to $\rz$.
\end{enumerate}	
\end{cor}

\begin{proof}
(a) By induction on the inversion height of $r$ it suffices to prove the following: if $s\in\rez$ represents an element in $\rz$ and $s$ is non-degenerate, then $s$ represents $0$ in $\rz$ if and only if $s$ vanishes on $\bigcup_m \mm{m}^g$. The implication $(\Leftarrow)$ holds by Proposition \ref{p:idD}. Conversely, to prove $(\Rightarrow)$ let $s$ be defined on $\mm{m}^g$. Then $s(z)\in\ud{m}(D;z)$. Since $\ud{m}(D;z)$ is a skew field by Proposition \ref{p:tensgen} and $s$ represents $0$ in $\rz$, we have $s(z)=0$ by the universality of $\rz$. Therefore $s$ vanishes on $\mm{m}^g$.

(b) Consider the map $\phi:\rz\to\cE$ sending the element in $\rz$ represented by $r\in\rez$ to the equivalence class of $r$. By the proof of (a), this is a well-defined injective homomorphism. Since $\phi|_{\pz}=\id_{\pz}$ and $\rz,\cE$ are skew fields generated by $\pz$, $\phi$ is surjective and therefore an isomorphism.
\end{proof}

Let $R$ be an arbitrary ring. A matrix $M\in \Mat_n(R)$ is {\bf full over $R$} if it is not a product of smaller rectangular matrices over $R$. If $R$ is a fir, then every full matrix over $R$ is invertible in the universal skew field of fractions of $R$ by \cite[Corollary 4.5.9]{Co1}.

\begin{cor}\label{c:rzmat}
Let $M$ be a matrix over $\pz$. If $M$ is full over $\pz$, then $M(a)$ is invertible for some $a\in\mm{m}^g$.
\end{cor}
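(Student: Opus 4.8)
The plan is to combine fullness over the fir $\pz$ with the matrix-evaluation characterization of $\rz$ provided by Corollary~\ref{c:rz}. Since $\pz$ is a fir, a full matrix $M\in\Mat_n(\pz)$ is invertible over its universal skew field of fractions $\rz$, as recalled just above; write $M^{-1}=(\rho_{ij})_{ij}\in\Mat_n(\rz)$. Because $\rz$ is generated as a skew field by the image of $\pz$, which is in turn generated as a ring by $D\cup Z$, each entry $\rho_{ij}$ is the value of some rational expression $r_{ij}\in\rez$; in particular every $r_{ij}$ represents an element of $\rz$, so by Corollary~\ref{c:rz} it is defined at some point of $\mm{m_{ij}}^g$ for some $m_{ij}\in\N$.

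Next I would arrange a common matrix size and a common evaluation point. If an expression in $\rez$ is defined at $a\in\mm{m}^g$ and at $a'\in\mm{m'}^g$, then it is defined at $a\oplus a'\in\mm{m+m'}^g$ with value $r(a)\oplus r(a')$, since at each inversion step the matrix involved is block diagonal with both blocks invertible; iterating direct sums, we may assume that all $r_{ij}$ are defined somewhere on $\mm{m}^g$ for one fixed $m$. The domain of each $r_{ij}$ in $\mm{m}^g$ is then a non-empty Zariski-open subset of the irreducible variety $\mm{m}^g\cong\kk^{gm^2}$, so the finitely many such domains share a common point $a$; the entries of $M$ are polynomials and hence defined at $a$ as well. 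Put $N=(r_{ij}(a))_{ij}\in\Mat_n(\Mat_m(D))$ and regard $M(a)$ as an element of $\Mat_n(\Mat_m(D))=\Mat_{nm}(D)$.

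It remains to transfer the identity $MN=I_n$, valid over $\rz$, to the evaluation at $a$: once $M(a)N=I_{nm}$ is known, $M(a)$ has a right inverse over the skew field $D$ and is therefore invertible, which is the assertion. Each entry $(MN)_{ik}-\delta_{ik}$ of $MN-I_n$ is an expression in $\rez$ that represents $0$ in $\rz$ and is defined at $a$, so everything comes down to the claim: \emph{if $e\in\rez$ represents $0$ in $\rz$ and is defined at $b\in\mm{m}^g$, then $e(b)=0$}. I would prove this in the spirit of the proof of Proposition~\ref{p:idD}: the evaluation $\pz\to\ud{m}(D;z)=\hat{D}$ sending $Z$ to the tuple $z$ of generic $m\times m$ matrices extends, by the universal property of $\rz$ (Definition~\ref{d:sff}), to a local homomorphism $\lambda\colon\rz\dashrightarrow\hat{D}$; an induction on the inversion height of $e$ shows that $e$ is defined at $z$, that the element it represents lies in $\dom\lambda$, and that $\lambda$ sends it to $e(z)$. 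Since that element is $0$ we get $e(z)=0$ in $\hat{D}\subseteq\Mat_m(D(\xi))$, and specializing the commuting entries $\xi$ to the matrix entries of $b$ — legitimate because $e$ is defined on a Zariski-dense open subset of $\mm{m}^g$ containing $b$, as in Remark~\ref{r:dt} — yields $e(b)=0$.

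The main obstacle is precisely this last transfer step. The rest is routine bookkeeping: fullness gives invertibility over $\rz$ for free, representing the entries of $M^{-1}$ by expressions is tautological, and the common size and common point are obtained by elementary direct-sum and Zariski-density arguments. By contrast, establishing that an expression representing $0$ over $\rz$ actually vanishes wherever it is defined on tuples of $\kk$-matrices genuinely requires the interaction between the generic-matrix skew field $\hat{D}=\ud{m}(D;z)$, the universal property of $\rz$, and the inversion-height induction underlying Proposition~\ref{p:idD}.
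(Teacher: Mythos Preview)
Your proposal is correct and follows exactly the same route as the paper's proof: full matrices over the fir $\pz$ are invertible in $\rz$, pick rational-expression representatives for the entries of $M^{-1}$, and evaluate at a common point $a$ to get $M(a)N(a)=I$. The paper's proof is two sentences and simply asserts the last equality; you correctly single this out as the substantive step and supply the justification via the local homomorphism $\lambda\colon\rz\dashrightarrow\hat D$ and generic-point evaluation, which is the machinery underlying Corollary~\ref{c:rz}. Your added bookkeeping (direct sums to reach a common matrix size, Zariski-density to find a common point) is also left implicit in the paper.

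One small comment on the induction you sketch: in the inverse step $s=t^{-1}$, to conclude that $s$ is defined at the generic tuple $z$ you need $t(z)\neq 0$ in $\hat D$, and this does not follow from the induction hypothesis for $t$ alone—you are implicitly using that $t(b)$ is invertible forces $t(z)\neq 0$. This is exactly the specialization you flag at the end, so it should really be carried along inside the induction (for instance by simultaneously tracking a formal power-series expansion of each subexpression at $b$, where invertibility at $b$ lifts to invertibility of the series). The paper does not address this point either, so your treatment is already more careful than the original.
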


\begin{proof}
Since $\pz$ is a fir, $M$ is invertible over $\rz$. Let $N$ be a matrix of rational expressions that are representatives of the entries in the inverse of $M$ in $\rz$. If $a$ belongs to the intersection of domains of entries in $N$, we have $M(a)N(a)=I$. Such an $a$ exists by Corollary \ref{c:rz}(a).
\end{proof}

\subsection{Main theorem}\label{subsec42}

We are finally in a position to prove the universal property of $\mpr$.

\begin{thm}\label{t:univ}
The skew field $\mpr$ is the universal skew field of fractions of $\txx$.
\end{thm}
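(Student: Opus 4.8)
The plan is to verify the characterization of the universal skew field of fractions recorded in Remark \ref{r:another}(2): one must show that every matrix $\MB$ over $\txx$ that becomes invertible under some homomorphism $\txx\to D$ (for $D$ a skew field whose center contains $\kk$) is already invertible over $\mpr$. By Proposition \ref{p:inv}, it suffices to produce a single point $a\in\cM^{\ug}_{\seqn}$ at which $\MB(a)$ is invertible. So the whole task reduces to the following: given $\MB$ over $\txx$ and a skew-field homomorphism $\varphi\colon\txx\to D$ under which $\varphi(\MB)$ is invertible, find matrices over $\kk$ satisfying the multipartite commutation relations at which $\MB$ evaluates to an invertible matrix.

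The key structural tool is the block decomposition furnished by the embedding $\txx = \prempr{1}{G-1}\kotimes\rx{G}\cap\cdots$, more precisely by iterating Proposition \ref{p:matval}: a partial evaluation with respect to $X^{(1)},\dots,X^{(G-1)}$ turns a matrix over $\txx$ into a matrix over $\rex{G}$, i.e. into an object of the form handled by Corollaries \ref{c:rz} and \ref{c:rzmat}. Concretely, I would proceed by induction on $G$. For $G=1$ this is exactly the statement that $\re/\!\sim$ is the free skew field and is the universal skew field of fractions of $\ppx$ — a known fact (\cite{HMV,KVV1}), which in the present language follows from Corollary \ref{c:rzmat} applied with $D=\kk$. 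For the inductive step, set $A_0=\prempr{1}{G-1}$, so that $\txx = A_0\kotimes\rx{G}$ and, more to the point, $\mpr$ is a skew field of fractions of $A_0\kotimes\px{G}$ where $A_0$ is (by the inductive hypothesis) the universal skew field of fractions of $\pretxx{1}{G-1}$. The free $A_0$-ring $A_0\Langle X^{(G)}\Rangle$ is a fir by \cite[Corollary 2.5.2]{Co3}, with universal skew field of fractions $A_0\plangle X^{(G)}\prangle$; the content of Subsection \ref{subsec41} (Proposition \ref{p:tensgen}, Proposition \ref{p:idD}, Corollaries \ref{c:rz} and \ref{c:rzmat}) is precisely the apparatus that converts invertibility over this skew field into invertibility of a $\kk$-matrix evaluation via multipartite generic matrices.

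The argument then runs as follows. Take $\MB$ over $\txx$ with $\varphi(\MB)$ invertible in $\Mat_d(D)$. Using that $A_0$ is the \emph{universal} skew field of fractions of $\pretxx{1}{G-1}$, the homomorphism $\varphi$ restricted to $\pretxx{1}{G-1}$ extends to a local homomorphism $A_0\dashrightarrow D$; combining this with $\varphi$ on $\px{G}$ gives a homomorphism from the free $A_0'$-ring on $X^{(G)}$ (for a suitable local subring $A_0'\subseteq A_0$) into $D$ under which $\MB$ stays invertible. Hence $\MB$, viewed over the fir $A_0\Langle X^{(G)}\Rangle$, cannot be a non-full matrix — a non-full matrix maps to a non-full (hence non-invertible) matrix over any skew field. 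Therefore $\MB$ is full over $A_0\Langle X^{(G)}\Rangle$, so by \cite[Corollary 4.5.9]{Co1} it is invertible over $A_0\plangle X^{(G)}\prangle$. Now I pass to multipartite generic matrices: realize $A_0$ inside $\ud{n_1,\dots,n_{G-1}}(x)$ for large enough $n_i$ (using that, by the inductive hypothesis together with Lemma \ref{l:det}/Proposition \ref{p:ud}, $A_0$ embeds into the skew field $\ud{n_1,\dots,n_{G-1}}(x)$ compatibly with $\kk$-evaluations), and apply the "full $\Rightarrow$ invertible under a $\kk$-matrix evaluation" mechanism of Corollary \ref{c:rzmat} to the matrix $\MB(x,X^{(G)})$ over $\ud{n_1,\dots,n_{G-1}}(x)\Langle X^{(G)}\Rangle \subseteq \pz$ with $D=\ud{n_1,\dots,n_{G-1}}(x)$. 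This yields $b\in\mm{n_G}^{g_G}$ with $\MB(x,b)$ invertible over $\ud{n_1,\dots,n_{G-1}}(x)$, hence (by Lemma \ref{l:det} applied componentwise, or directly by genericity) an honest point $a^{(1)},\dots,a^{(G-1)}$ over $\kk$ at which $\MB(a^{(1)},\dots,a^{(G-1)},b)$ is invertible — an invertible multipartite evaluation of $\MB$. By Proposition \ref{p:inv}, $\MB$ is invertible over $\mpr$, completing the induction.

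The main obstacle is the bookkeeping at the junction between the two worlds: one must realize the abstract skew field $A_0 = \pretxx{1}{G-1}$ concretely inside $\ud{n_1,\dots,n_{G-1}}(x)$ in a way that is simultaneously (i) compatible with the local homomorphism extending $\varphi$, so that fullness of $\MB$ over $A_0\Langle X^{(G)}\Rangle$ is genuinely inherited, and (ii) compatible with $\kk$-matrix evaluations, so that Corollary \ref{c:rzmat} delivers a point $a$ in $\cM^{\ug}$ rather than merely a point over some auxiliary skew field. Reconciling these two compatibilities — essentially, transporting fullness of a matrix through the chain $A_0\Langle X^{(G)}\Rangle \to \ud{\ldots}(x)\Langle X^{(G)}\Rangle \to \pz$ — is where the "block structure of multipartite evaluations" advertised in the introduction does the decisive work, and it is the step I expect to require the most care; once a full matrix has been transferred into the setting of Corollary \ref{c:rzmat}, the extraction of an invertible $\kk$-evaluation is routine.
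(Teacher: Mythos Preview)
Your overall framework --- induction on $G$, the characterization from Remark \ref{r:another}(2), and reducing via Proposition \ref{p:inv} to a single invertible multipartite matrix evaluation --- matches the paper. The divergence is in how the induction step is organized, and your choice creates two real difficulties. You keep $X^{(1)},\dots,X^{(G-1)}$ abstract and work over $A_0\Langle X^{(G)}\Rangle$ with $A_0=\prempr{1}{G-1}$; to proceed you must show that $M$ is full over $A_0\Langle X^{(G)}\Rangle$. But the only map to $D$ you possess is $\varphi$, defined on $\txx$, not on $A_0\Langle X^{(G)}\Rangle$; the extension $A_0\dashrightarrow D$ is merely local, so a hypothetical non-full factorization $M=PQ$ over $A_0\Langle X^{(G)}\Rangle$ may have entries outside its domain and cannot be pushed down to $D$. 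This is exactly the ``main obstacle'' you flag but do not resolve. Your second step --- embedding $A_0$ into $\ud{n_1,\dots,n_{G-1}}(x)$ --- is false as stated: already for $G=2$ and $g_1\ge 2$, the free skew field $\rx{1}$ is infinite-dimensional over its center $\kk$, while each $\ud{n_1}(x^{(1)})$ is finite-dimensional over its center, so no such embedding exists for any $n_1$. At best one has a local homomorphism, which again does not transport fullness in the needed direction.

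The paper sidesteps both problems by reversing the roles: it substitutes $X^{(2)},\dots,X^{(G)}$ by their images $b=(\varphi(X^{(2)}),\dots,\varphi(X^{(G)}))$ in the \emph{given} skew field $D$, obtaining $\widetilde{M}=M(X^{(1)},b)$ over $D\Langle X^{(1)}\Rangle$. Now fullness is immediate --- the evaluation $X^{(1)}\mapsto\varphi(X^{(1)})$ is a genuine ring homomorphism $D\Langle X^{(1)}\Rangle\to D$ under which $\widetilde{M}$ becomes $\varphi(M)$, invertible --- and Corollary \ref{c:rzmat} applies directly with this $D$ to produce $a'\in\mm{n_1}^{g_1}$ with $\widetilde{M}(a')$ invertible. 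Substituting $X^{(1)}\mapsto a'$ in $M$ yields a $dn_1\times dn_1$ matrix $N$ over $\pretxx{2}{G}$ with $N(b)=\widetilde{M}(a')$ invertible in $\Mat_{dn_1}(D)$, and the induction hypothesis on $G-1$ groups, together with Proposition \ref{p:inv}, finishes. The point is that Corollary \ref{c:rzmat} was proved for an \emph{arbitrary} skew field $D$; using the target $D$ itself rather than the auxiliary $A_0$ as the coefficient ring of the free ring is what makes the fullness step trivial.
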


\def\MM{\widetilde{M}}

\begin{proof}
The assertion is proved by induction on $G$. The basis case $G=1$ is presented in \cite[Proposition 2.2]{KVV2} as a consequence of Amitsur's theorem on rational identities \cite[Theorem 16]{Ami}. Therefore let $G\ge 2$; we will use the characterization of universality from Remark \ref{r:another}.

Let $M$ be a $d\times d$ matrix over $\txx$ and assume there exists a skew field $D$ and a homomorphism $\varphi:\txx\to D$ such that $\varphi(M)$ is invertible over $D$. Set $b'=\varphi(X^{(1)})$ and $b=(\varphi(X^{(2)}),\dots,\varphi(X^{(G)}))$. Then $\widetilde{M}=M(X^{(1)},b)$ is a $d\times d$ matrix over $D\Langle X^{(1)}\Rangle$. Since $\widetilde{M}(b')$ is invertible, $\widetilde{M}$ is full over $D\Langle X^{(1)}\Rangle$ and Corollary \ref{c:rzmat} implies that $\widetilde{M}(a')$ is invertible for some $a'\in\mm{n_1}^{g_1}$.

Let $N$ be the $dn_1\times dn_1$ matrix over $\pretxx{2}{G}$ obtained from $M$ by substituting $X^{(1)}$ with $a'$. In particular, we have
$$N(b)=M(a',b)=\widetilde{M}(a').$$
By the induction hypothesis, $\prempr{2}{G}$ is the universal skew field of fractions of $\pretxx{2}{G}$. Since $N(b)$ admits an invertible evaluation $N(b)$ over $D$, $N$ is invertible over $\prempr{2}{G}$ by the universality. Therefore $N(a)$ is invertible for some $a\in\cM^{\ug}_{n_2,\dots,n_G}$ by Proposition \ref{p:inv}. Since $M(a',a)=N(a)$ is invertible, $M$ is invertible in $\mpr$ by Proposition \ref{p:inv}.
\end{proof}

\begin{cor}\label{c:univ}
Every finite tensor product of free algebras has a universal skew field of fractions.
\end{cor}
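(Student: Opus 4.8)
The plan is to bootstrap from Theorem \ref{t:univ} to an arbitrary (possibly infinitely generated) tensor product of free $\kk$-algebras by a directed-colimit argument, exploiting the fact that every rational expression and every matrix over such a tensor product involves only finitely many variables. So let $X^{(1)},\dots,X^{(G)}$ be arbitrary sets, put $R=\bigotimes_{i=1}^G\kk\Langle X^{(i)}\Rangle$, and let $\Lambda$ be the directed set, under componentwise inclusion, of tuples $\mathbf Y=(Y^{(1)},\dots,Y^{(G)})$ of \emph{finite} subsets $Y^{(i)}\subseteq X^{(i)}$. For $\mathbf Y\in\Lambda$ write $R_{\mathbf Y}=\bigotimes_i\kk\Langle Y^{(i)}\Rangle$ and let $\cU_{\mathbf Y}=\kk\plangle Y^{(1)}\protect\mpsymbol\cdots\protect\mpsymbol Y^{(G)}\prangle$ be its universal skew field of fractions, which exists by Theorem \ref{t:univ}. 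Since $R=\varinjlim_{\mathbf Y}R_{\mathbf Y}$, the goal is to show that $\cU\eqcolon\varinjlim_{\mathbf Y}\cU_{\mathbf Y}$ is the universal skew field of fractions of $R$.

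First I would check that $(\cU_{\mathbf Y})_{\mathbf Y\in\Lambda}$ is a directed system with injective structure maps. For $\mathbf Y\le\mathbf Y'$, every rational expression in the variables $Y^{(i)}$ is in particular one in the variables $Y'^{(i)}$, and two expressions in the $Y^{(i)}$ that agree under all mp-evaluations on $\cM^{\ug}$ in these variables also agree under all mp-evaluations in the $Y'^{(i)}$, since the additional variables are simply ignored; hence the inclusion $R_{\mathbf Y}\hookrightarrow R_{\mathbf Y'}$ extends to a $\kk$-algebra homomorphism $\cU_{\mathbf Y}\to\cU_{\mathbf Y'}$, which is injective because $\cU_{\mathbf Y}$ is a skew field. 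These maps are compatible, so the filtered colimit $\cU$ exists; being a filtered colimit of skew fields along embeddings it is itself a skew field, the map $R\to\cU$ is injective because any nonzero element of $R$ lies nonzero in some $R_{\mathbf Y}$ and hence nonzero in $\cU_{\mathbf Y}\subseteq\cU$, and every element of $\cU$ already lies in some $\cU_{\mathbf Y}$, which is generated as a skew field by $R_{\mathbf Y}\subseteq R$. Thus $\cU$ is a skew field of fractions of $R$.

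For the universal property I would use the matrix criterion of Remark \ref{r:another}(2). Let $M\in\Mat_d(R)$ become invertible under a homomorphism $\varphi\colon R\to D$ into a skew field. Since $M$ has only finitely many entries there is $\mathbf Y\in\Lambda$ with $M\in\Mat_d(R_{\mathbf Y})$, and $\varphi|_{R_{\mathbf Y}}$ still makes $M$ invertible over $D$; by Theorem \ref{t:univ} together with Remark \ref{r:another}(2) applied to $R_{\mathbf Y}$, the matrix $M$ is invertible over $\cU_{\mathbf Y}$, hence over $\cU$. By Remark \ref{r:another}(2) this shows that $\cU$ is the universal skew field of fractions of $R$, which proves the corollary.

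The only step that is not purely formal is showing that enlarging the finite generating sets induces embeddings of the associated skew fields of multipartite rational functions; but, as indicated, this is immediate from the description of $\mpr$ as equivalence classes of rational expressions modulo mp-evaluations, and no extension of the PI-theoretic apparatus of Sections \ref{sec2}--\ref{sec4} is needed. Equivalently, one could define $\mpr$ directly over arbitrary sets $X^{(i)}$ and note that the proofs of Theorems \ref{t:div} and \ref{t:univ} only ever involve finitely many variables at a time, so that $\cU\cong\mpr$ in that extended sense.
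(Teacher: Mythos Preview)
Your argument is essentially identical to the paper's: both take the directed colimit over finite subtuples $\mathbf Y$ of the universal skew fields $\cU_{\mathbf Y}$ furnished by Theorem~\ref{t:univ}, and verify universality of the colimit via the matrix criterion of Remark~\ref{r:another}(2). The only cosmetic difference is that the paper invokes Remark~\ref{r:shuffle} for the embeddings $\cU_{\mathbf Y}\hookrightarrow\cU_{\mathbf Y'}$, whereas you spell out the underlying reason in terms of mp-evaluations; this is precisely what Remark~\ref{r:shuffle} records.
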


\begin{proof}
For $1\le i\le G$ let $S^{(i)}$ be an index set and consider $\cA=\kk\Langle S^{(1)} \mpsymbol\cdots\mpsymbol S^{(G)}\Rangle$. Let $\cF(S^{(i)})$ be the set of all finite subsets of $S^{(i)}$ and endow $\cS=\prod_i\cF(S^{(i)})$ with the partial order
$$X \preceq X' \iff \forall i\colon X^{(i)}\subseteq X'^{(i)}$$
for $X,X'\in \cS$. By Theorem \ref{t:univ}, $\txx$ has the universal skew field of fractions $\cU(X)$ for every $X\in\cS$. Moreover, if $X\preceq X'$, then $\cU(X)$ naturally embeds into $\cU(X')$ by Remark \ref{r:shuffle}. Since $(\cS,\preceq)$ is a lattice and $\{\cU(X)\colon X\in\cS\}$ together with aforementioned natural embeddings is a directed system, there exists the direct limit of skew fields
$$\cU=\varinjlim \cU(X).$$
It is easy to see that $\cU$ is a skew field of fractions of $\cA$. Let $M$ be a matrix over $\cA$ and assume that the image of $M$ is invertible under some homomorphism to a skew field. By looking at the entries of $M$ we conclude that $M$ is a matrix over $\txx$ for some $X\in\cS$. But then $M$ is invertible as a matrix over $\cU(X)$ and thus also as a matrix over $\cU$. Hence $\cU$ is the universal skew field of fractions of $\cA$.
\end{proof}

A consequence of Corollary \ref{c:univ} is also the following statement in the terms of group algebras. For related results we refer to \cite{Le,Pa}.

\begin{cor}
The group $\kk$-algebra of a finite direct product of free groups admits a universal skew field of fractions.
\end{cor}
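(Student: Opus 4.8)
The plan is to reduce the statement about the group $\kk$-algebra of a finite direct product of free groups to Corollary \ref{c:univ} by showing that such a group algebra is (isomorphic to) a suitable Ore localization of a finite tensor product of free algebras, and then arguing that a universal skew field of fractions passes to Ore localizations. First I would recall the classical fact that if $F$ is a free group on a set $S$, then its group $\kk$-algebra $\kk F$ is the Ore localization of the free algebra $\kk\Langle S\Rangle$ at the multiplicative set generated by $S$ (equivalently, $\kk F = \kk\Langle S\Rangle[\,s^{-1}:s\in S\,]$, which makes sense inside the free skew field $\rx{}$ since each generator is already invertible there). Then for a finite direct product $F = F_1\times\cdots\times F_G$ with $F_i$ free on $S^{(i)}$, the group algebra factors as a tensor product $\kk F \cong \kk F_1 \kotimes \cdots \kotimes \kk F_G$, and each tensor factor is the localization of $\kk\Langle S^{(i)}\Rangle$ just described; hence $\kk F$ is a localization of $\cA = \kk\Langle S^{(1)} \mpsymbol\cdots\mpsymbol S^{(G)}\Rangle$ at the (central-in-each-factor, hence Ore) multiplicative set $\Sigma$ generated by $\bigcup_i S^{(i)}$.

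Next I would invoke Corollary \ref{c:univ} to get that $\cA$ has a universal skew field of fractions $\cU$; since every generator in $\bigcup_i S^{(i)}$ is invertible over $\cU$ (the image of $X\in\cS$ with a single variable already contains the inverse, by Remark \ref{r:shuffle} and the $G=1$ case), the localization map $\cA\to \kk F$ extends to an embedding $\kk F \hookrightarrow \cU$ whose image generates $\cU$ as a skew field — so $\cU$ is also a skew field of fractions of $\kk F$. The remaining point is universality: given any homomorphism $\psi\colon \kk F\to D$ to a skew field, its restriction to $\cA$ is a homomorphism $\cA\to D$, which by Corollary \ref{c:univ} extends to a local homomorphism $\cU\dashrightarrow D$ whose domain contains $\cA$; one checks the domain automatically contains $\kk F$ because each $s\in S^{(i)}$ maps under $\psi$ to an invertible element of $D$, so $s$ lies in the domain of the local homomorphism and its inverse is sent to $\psi(s)^{-1}$. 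Equivalently, using the matrix characterization of Remark \ref{r:another}(2): any matrix over $\kk F$ that becomes invertible under some homomorphism to a skew field can be cleared of denominators (multiply rows by elements of $\Sigma$) to a matrix over $\cA$ with the same property, which is then invertible over $\cU$.

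The main obstacle I anticipate is the bookkeeping around the localization: verifying that the multiplicative set $\Sigma$ generated by $\bigcup_i S^{(i)}$ in $\cA$ satisfies the (left and right) Ore condition, so that the localization $\cA\Sigma^{-1}$ exists and is honestly isomorphic to $\kk F$ rather than merely receiving a map from it. This should follow because within $\cU$ the set $\Sigma$ consists of units and $\cA\Sigma^{-1}$ is exactly the subring of $\cU$ generated by $\cA$ and the inverses of elements of $\Sigma$, which visibly coincides with the copy of $\kk F$ sitting inside $\cU$; but one must be a little careful that no new relations are introduced, which is ultimately guaranteed by the faithfulness of $\kk F \hookrightarrow \rx{1}\kotimes\cdots\kotimes\rx{G}\subseteq\cU$ for free groups. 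Once the identification $\kk F \cong \cA\Sigma^{-1}$ is in place, universality of $\cU$ for $\kk F$ is a formal consequence of its universality for $\cA$, since specializations and local homomorphisms are insensitive to inverting elements that are already units in the source.
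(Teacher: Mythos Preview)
Your overall strategy is right and matches the paper's intent (the paper offers no explicit proof, treating the statement as an immediate consequence of Corollary~\ref{c:univ}): embed $\cA=\kk\Langle S^{(1)}\mpsymbol\cdots\mpsymbol S^{(G)}\Rangle$ into the group algebra $\kk F$, observe that both sit inside the same universal skew field $\cU$, and check universality by restricting and extending local homomorphisms. However, two of your intermediate claims are false. First, the multiplicative set $\Sigma$ generated by $\bigcup_i S^{(i)}$ is \emph{not} Ore in $\cA$ once some $|S^{(i)}|\ge 2$: in $\kk\Langle x,y\Rangle$ there is no monomial $w$ and polynomial $p$ with $(x+y)w=xp$, since every term of $xp$ begins with $x$ while $(x+y)w$ has a term beginning with $y$. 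Your parenthetical ``central-in-each-factor, hence Ore'' is simply wrong---the generators $X^{(i)}_j$ are not central in $\px{i}$. Second, and for the same reason, the matrix argument via ``clearing denominators by multiplying rows by elements of $\Sigma$'' does not work: the element $s_1^{-1}+s_2^{-1}\in\kk F_1$ cannot be brought into the free algebra by one-sided multiplication by any monomial.

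What does work is your local-homomorphism argument, and you should lean on it exclusively. The embedding $\kk F\hookrightarrow\cU$ follows cleanly from Theorem~\ref{t:div}: each $\kk F_i\hookrightarrow\rx{i}$ is classical, tensoring injections over $\kk$ gives $\kk F\hookrightarrow\rx{1}\kotimes\cdots\kotimes\rx{G}$, and an iterated application of Theorem~\ref{t:div} yields $\rx{1}\kotimes\cdots\kotimes\rx{G}\hookrightarrow\cU$. Once $\cA\subseteq\kk F\subseteq\cU$ is in place, universality is exactly as you say: given $\psi\colon\kk F\to D$, the restriction to $\cA$ extends to a local homomorphism $\lambda\colon\cU\dashrightarrow D$ whose domain is a local subring containing $\cA$; since each $s\in\bigcup_iS^{(i)}$ has $\lambda(s)=\psi(s)\in D^\times$, it is a unit in that local subring, so $s^{-1}$ lies in the domain and hence so does all of $\kk F$. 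Conceptually, $\kk F$ is the \emph{universal} (Cohn) localization of $\cA$ at $\Sigma$, not an Ore localization, and universal localization at elements that are already invertible in $\cU$ leaves the universal skew field of fractions unchanged.
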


\subsection{Vanishing on multipartite varieties}\label{subsec43}

For any ring $R$ let
$$\mpvar{R}=\left\{(a^{(1)},\dots,a^{(G)})\in R^{g_1}\times\cdots\times R^{g_G}\colon 
\left[a^{(i_1)}_{j_1},a^{(i_2)}_{j_2}\right]=0 \ \forall i_1\neq i_2 \ \forall j_1,j_2\right\}$$
be the {\bf multipartite variety} associated with $R$.

\begin{prop}\label{p:comm}
Let $r\in\re$.
\begin{enumerate}[\rm(a)]
\item If $r$ nc-vanishes on $\mpvar{\mm{n_1\cdots n_G}}$, then it mp-vanishes or is mp-undefined 
on $\cM^{\ug}_{\seqn}$.
\item If $r$ mp-vanishes on $\cM^{\ug}_{n,\dots,n}$, then it nc-vanishes or is nc-undefined 
on $\mpvar{\mm{n}}$.
\end{enumerate}
\end{prop}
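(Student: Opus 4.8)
For part (a) the plan is short. For $a\in\cM^{\ug}_{\seqn}$ the tuple $\tau(a)$ lies in $\mpvar{\mm{n_1\cdots n_G}}$, since $\tau_i(a^{(i)})$ and $\tau_{i'}(a^{(i')})$ commute entrywise whenever $i\ne i'$, and by definition $r(a)^{\mps}=r(\tau(a))$. Hence if $r$ nc-vanishes on $\mpvar{\mm{n_1\cdots n_G}}$ and $r$ is mp-defined at some $a\in\cM^{\ug}_{\seqn}$, then $\tau(a)$ lies in $\dom r$ intersected with the multipartite variety, so $r(a)^{\mps}=r(\tau(a))=0$, and $r$ mp-vanishes on $\cM^{\ug}_{\seqn}$; if instead $r$ is mp-defined nowhere on $\cM^{\ug}_{\seqn}$ it is mp-undefined there.

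For part (b) I would first translate the hypothesis, by the density argument in the proof of Lemma \ref{l:det} together with Proposition \ref{p:ud}, into the statement that $r$ is mp-nondegenerate at level $(n,\dots,n)$ and $r(x)=0$ in the skew field $\ud{n,\dots,n}(x)$, where $x$ is the tuple of multipartite generic matrices; by Remark \ref{r:updown} the same holds at every sublevel $(m_1,\dots,m_G)$ with $m_i\mid n$. Now fix $(a^{(1)},\dots,a^{(G)})\in\mpvar{\mm{n}}$ at which $r$ is nc-defined and let $\psi\colon\txx\to\mm{n}$, $X^{(i)}_j\mapsto a^{(i)}_j$, be the associated homomorphism. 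The key observation is that $\psi$ factors through $\gm{n,\dots,n}(x)$: the tensor evaluation $X^{(i)}_j\mapsto\tau_i(a^{(i)}_j)$ into $\bkotimes_{k=1}^G\mm{n}=\mm{n^G}$ is a specialization of the multipartite generic matrices, so it annihilates everything that vanishes in $\gm{n,\dots,n}(x)$ (Proposition \ref{p:tens}), while the subalgebra generated by the $\tau_i(a^{(i)}_j)$ is canonically $\bkotimes_{k=1}^G\langle a^{(k)}_j\colon j\rangle$ and surjects onto $\langle a^{(i)}_j\colon i,j\rangle\subseteq\mm{n}$ via the commutation relations defining $\mpvar{\mm{n}}$; composing yields $\mu\colon\gm{n,\dots,n}(x)\to\mm{n}$ with $\mu(x^{(i)}_j)=a^{(i)}_j$. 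Since $r$ is nc-defined at $a=\mu(x)$, every inverse occurring in the evaluation of $r$ at $x$ exists in the skew field $\ud{n,\dots,n}(x)$ (a nonzero element is invertible) and its $\mu$-image is the corresponding invertible matrix needed to form $r(a)$. Extending $\mu$, by induction on the inversion height of $r$, to a homomorphism $\bar\mu$ from the subring of $\ud{n,\dots,n}(x)$ generated by $\gm{n,\dots,n}(x)$ and these inverses, and using that ring homomorphisms preserve inverses of units, one obtains $\bar\mu(r(x))=r(a)$; as $r(x)=0$ this gives $r(a)=0$, so $r$ nc-vanishes on $\mpvar{\mm{n}}$.

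The delicate point, and the step I expect to be the main obstacle, is the extension of $\mu$ across inverses: a nonzero element of $\gm{n,\dots,n}(x)$ may be sent by $\mu$ to a noninvertible (even zero) matrix, and central elements of $\gm{n,\dots,n}(x)$ need not have central images, so the localizations of $\gm{n,\dots,n}(x)$ inside $\ud{n,\dots,n}(x)$ are not automatically $\mu$-compatible. Carrying out the induction requires using precisely that the inverses one actually needs are of elements whose $\mu$-images are invertible — this is exactly the assumption that $r$ is nc-defined at $a$ — and reducing each noncommutative inverse to the inversion of a central element (a reduced norm, or a determinant in $\Mat_{n^G}(\kk[\zeta])$) while keeping track of which central elements of $\gm{n,\dots,n}(x)$ are carried to units of $\mm{n}$. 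An alternative route, bypassing the generic matrix algebra, would replace $a$ by a generic point of the component of $\mpvar{\mm{n}}$ through it, decompose the underlying space into blocks on which the generating subalgebras act semisimply, identify each block with a tensor evaluation into $\bkotimes_{k=1}^G\mm{p_k}$ with $p_k\mid n$, and apply the sublevel form of the hypothesis blockwise; there the obstacle becomes showing that every component of $\mpvar{\mm{n}}$ admits such a tensor-divisible decomposition.
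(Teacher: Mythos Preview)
Your treatment of part (a) is correct and matches the paper exactly.

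For part (b) your key construction --- the homomorphism $\mu\colon\gm{n,\dots,n}(x)\to\mm{n}$ obtained by multiplying out the tensor factors --- is precisely the paper's map $\ell|_{\cB}$, where $\cB\subseteq\mm{n^G}$ is the subalgebra generated by $\tau(b)$ and $\ell\colon c_1\otimes\cdots\otimes c_G\mapsto c_1\cdots c_G$ is the multiplication map. So you have isolated the right idea. The difference is in how this map is deployed, and this is exactly where your self-identified ``main obstacle'' lies.

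You try to push $\mu$ through the rational expression $r$ by induction on inversion height, adjoining to the domain of $\mu$ the inverses $t(x)^{-1}$ as they arise. As you correctly note, the problem is that extending a ring homomorphism across an inverse inside an ambient skew field is not automatic: even if $\mu(t(x))$ happens to be invertible, there is no general reason why $\mu$ extends consistently to the subring of $\ud{n,\dots,n}(x)$ generated by $\gm{n,\dots,n}(x)$ together with the needed inverses. Your suggested workarounds (reduced norms, or a blockwise tensor decomposition of components of $\mpvar{\mm n}$) would require substantial additional work.

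The paper sidesteps this entirely by \emph{first reducing to a polynomial}. One works not in $\ud{n,\dots,n}(x)$ but in the ordinary generic-matrix skew field $\ud{n^G}(z)$: since $r(z)\in\ud{n^G}(z)$, one can write $r(z)=p(z)q(z)^{-1}$ with $p,q\in\ppx$. From the hypothesis and Zariski density one gets $p(\tau(a))=0$ for every $a\in\cM^{\ug}_{n,\dots,n}$; then for $b\in\mpvar{\mm n}$ the multiplication map gives $p(b)=\ell|_{\cB}(p(\tau(b)))=0$, and hence $r(b)=0$ where $r$ is defined. The point is that $\ell|_{\cB}$ is applied only to the \emph{polynomial} $p$, so no extension across inverses is ever needed. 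Your remark about ``reducing each noncommutative inverse to the inversion of a central element'' is in fact gesturing toward exactly this trick --- the representation $r=pq^{-1}$ in a central-quotient ring --- but you stop short of using it. That reduction is the missing step.
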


\begin{proof}
(a) This is clear since $\tau(\cM^{\ug}_{\seqn})\subseteq \mpvar{\mm{n_1\cdots n_G}}$.

(b) Let $z$ be the $\sum_ig_i$-tuple of $n^G\times n^G$ generic matrices. Since $r(z)\in \ud{n^G}(z)$, there exist $p,q\in \ppx$ such that
$$r(z)=p(z)q(z)^{-1}.$$
By assumption and Zariski density we see that $p^{\mps}(a)=0$ for all $a\in\cM^{\ug}_{n,\dots,n}$. Now let $b\in \mpvar{\mm{n}}$ be arbitrary and denote by $\cB\subseteq \mm{n^G}$ the unital $\kk$-subalgebra generated by $\tau(b)$. We observe that the restriction of the $\kk$-linear map
$$\ell:\mm{n^G}\cong \mm{n}^{\otimes G}\to \mm{n},\quad c_1\otimes\cdots\otimes c_G
\mapsto c_1\cdots c_G$$
to the linear map $\ell|_{\cB}:\cB\to \mm{n}$ is actually a homomorphism of $\kk$-algebras. But then
$$p(b)=\ell|_{\cB}(p(\tau(b)))=\ell|_{\cB}(0)=0.$$
Therefore $r(b)=0$ if $r$ is defined at $b$.
\end{proof}

Proposition \ref{p:comm} implies that there is a well-defined notion of nc-evaluation of a mp rational function on $\mpvar{\mm{n}}$: if a representative $r$ of a mp rational function $\rrb$ is nc-defined on $\mpvar{\mm{n}}$, then we can set $\rrb(a)^{\operatorname{nc}}=r(a)$ for all $a\in\dom_n r\cap\mpvar{\mm{n}}$ and $r(a)$ is independent of the choice of the representative $r$.

Noncommutative rational identities are reasonably well-understood due to the work of Amitsur and Bergman \cite{Ami,Be1,Be2}. The following result is a weak multipartite version of Amitsur's theorem on rational identities.

\begin{thm}\label{t:sci}
Let $r\in \re$. The following are equivalent:
\begin{enumerate}[\rm(i)]
\item $r$ is mp-defined and nonzero on $\cM^{\ug}$;
\item $r$ is nc-defined and nonzero on $\mpvar{\mm{n}}$ for some $n\in\N$;
\item $r$ is nc-defined and nonzero on $\mpvar{D}$ for some skew field $D$.
\end{enumerate}
\end{thm}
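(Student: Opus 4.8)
The plan is to prove the cyclic chain of implications (i) $\Rightarrow$ (ii) $\Rightarrow$ (iii) $\Rightarrow$ (i), leaning on the characterization of $\mpr$ as the universal skew field of fractions of $\txx$ from Theorem \ref{t:univ} together with Remark \ref{r:another}(2). The nontrivial content is the passage from evaluations over an arbitrary skew field back to evaluations on matrices over $\kk$; the other two implications are comparatively soft.

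First I would show (i) $\Rightarrow$ (ii). Suppose $r$ is mp-defined and nonzero on $\cM^{\ug}$. By Remark \ref{r:nn} we may assume it is mp-defined and nonzero at some $a\in\cM^{\ug}_{n,\dots,n}$. Then $\tau(a)\in\mpvar{\mm{n^G}}$ witnesses that $r$ is nc-defined and nonzero there, which is exactly (ii) with the integer $n^G$. (The slight asymmetry in the exponent is harmless since (ii) only asks for \emph{some} $n$.) Next, (ii) $\Rightarrow$ (iii) is immediate: a matrix algebra $\mm{n}$ is in particular not a skew field only if $n>1$, but we do not need it to be a skew field—rather I would observe that $\mm{n}$ embeds into a skew field (e.g. via the generic matrix construction $\gm{n}(z)\subseteq\ud{n}(z)$, a skew field by the $G=1$ case of Proposition \ref{p:ud}), and that a commuting tuple maps to a commuting tuple, so a nonzero nc-evaluation of $r$ on $\mpvar{\mm{n}}$ produces a nonzero nc-evaluation on $\mpvar{D}$ for the skew field $D=\ud{n}(z)$; one just has to check, by induction on inversion height, that being nc-defined is preserved under such an embedding, which it is because inverses are sent to inverses.

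The heart of the matter is (iii) $\Rightarrow$ (i), and this is where I expect the main obstacle. Assume $r$ is nc-defined and nonzero at some $b=(b^{(1)},\dots,b^{(G)})\in\mpvar{D}$ for a skew field $D$ whose center contains $\kk$ (one may enlarge $\kk$ inside the center, or simply note the center automatically contains the prime field and pass to the subfield generated by $\kk$). The key point is that a commuting tuple over $D$ furnishes a genuine \emph{tensor evaluation}: the subalgebras $D^{(i)}\subseteq D$ generated by $b^{(i)}$ pairwise commute, so there is a homomorphism $\txx\to D$ sending $X^{(i)}\mapsto b^{(i)}$. Evaluating $r$ at $b$ thus realizes $r$ as an element of a skew field under a homomorphism from $\txx$. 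By Theorem \ref{t:univ}, $\mpr$ is the universal skew field of fractions of $\txx$; hence by Remark \ref{r:another}(2) every matrix over $\txx$ that becomes invertible under this homomorphism is already invertible over $\mpr$. Writing $r$ in terms of its linear matrix pencil / hollow realization—i.e. expressing $r(b)$ through the invertibility of a matrix $L$ over $\txx$ whose image under $b$ is invertible—one concludes that the corresponding matrix over $\mpr$ is invertible, so $r$ represents a well-defined nonzero element of $\mpr$, i.e. $\rrb\neq0$. It then remains to deduce that $r$ itself is mp-defined and nonzero on $\cM^{\ug}$: since $\rrb\neq0$ in the skew field $\mpr$, some representative of $\rrb$ is mp-nondegenerate, and a short argument using Lemma \ref{l:det} and the fact that $r(x)\in\udn$ is nonzero (as $r$ and $\rrb$ agree wherever both are defined) shows $r$ is mp-defined and nonzero somewhere on $\cM^{\ug}$. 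The delicate step is making rigorous the translation between ``$r$ nc-defined and nonzero at $b$'' and ``an explicit matrix over $\txx$ is inverted by the homomorphism $X^{(i)}\mapsto b^{(i)}$''—this is the standard rational-expression-to-realization bookkeeping (induction on inversion height, assembling the pencil from the atoms), and it must be carried out with enough care that the resulting matrix genuinely has entries in $\txx$ rather than in some localization.
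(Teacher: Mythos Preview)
Your implication (ii) $\Rightarrow$ (iii) has a genuine error: $\mm{n}$ cannot embed into any skew field for $n>1$, since it contains zero divisors (for instance $E_{11}E_{22}=0$). The inclusion $\gm{n}(z)\subseteq\ud{n}(z)$ does not help here---$\gm{n}(z)$ is the $\kk$-subalgebra of $\Mat_n(\kk[\zeta])$ generated by generic matrices, and it neither contains nor receives a copy of $\mm{n}$. There is thus no direct passage from a point of $\mpvar{\mm{n}}$ to a point of $\mpvar{D}$ for a skew field $D$. The paper instead routes this step through (i): Proposition~\ref{p:comm} (together with a short induction on inversion height to propagate definedness through subexpressions) yields (ii) $\Rightarrow$ (i), and then (i) $\Rightarrow$ (iii) is immediate because the tuple of multipartite generic matrices lies in $\mpvar{\udn}$, which is a skew field by Proposition~\ref{p:ud}.

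For (iii) $\Rightarrow$ (i), your realization approach is workable in principle but the writeup has a gap: invertibility of the pencil $L$ over $\mpr$ only shows that $r$ is mp-nondegenerate, not that $\rrb\neq0$. To obtain the latter one must either also linearize $r^{-1}$, or---what amounts to the same thing---invoke the local homomorphism $\lambda:\mpr\dashrightarrow D$ extending $\varphi$ (guaranteed by Definition~\ref{d:sff} and Theorem~\ref{t:univ}) and check that $\rrb\in\dom\lambda$ with $\lambda(\rrb)=r(b)\neq0$. The paper does exactly this, by induction on inversion height: the hypothesis ensures every subexpression $r_0$ with $r_0^{-1}$ appearing in $r$ has $\rrb_0\neq0$, so $r$ is mp-nondegenerate and $\rrb\in\dom\lambda$, whence $\lambda(\rrb)=r(b)\neq0$ gives $\rrb\neq0$. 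Since your route ultimately needs this local-homomorphism step anyway, the detour through a pencil buys nothing here.
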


\begin{proof}
The equivalence of (i) and (ii) follows by Proposition \ref{p:comm}. Since $\udn$ are skew fields, (i) implies (iii).

(iii)$\Rightarrow$(i). We prove the claim by induction on the inverse height of $r$. Let $r$ be defined and nonzero at $a=(a^{(1)},\dots,a^{(g)})\in\mpvar{D}$ and consider the homomorphism $\varphi:\txx\to D$ defined by $x^{(i)}\mapsto a^{(i)}$. The basis of induction now holds because $\rrb\in \txx\setminus\{0\}$ if there are no inverses appearing in $r$. By Theorem \ref{t:univ} and the universal property, there exists a local homomorphism $\lambda:\mpr\dashrightarrow D$ that extends $\varphi$. Since all sub-expressions of $r$ are defined and nonzero at $a$, the induction hypothesis implies $\rrb_0\neq 0$ for all $r_0\in \re$ such that $r_0^{-1}$ is a sub-expression in $r$. Therefore $r$ is a mp-nondegenerate expression and $\rrb$ lies in the domain of $\lambda$. Since $\lambda(\rrb)=r(a)\neq0$, we have $\rrb\neq0$.
\end{proof}

To obtain a full version of Amitsur's theorem, one would need to show that for an arbitrary fixed infinite dimensional skew field $E$ whose center contains $\kk$, (i)--(iii) from Theorem \ref{t:sci} are equivalent to
\begin{enumerate}[\rm(iv)]
	\item $r$ is nc-defined and nonzero on $\mpvar{E}$.
\end{enumerate}

\section{Free noncommutative function theory perspective}\label{sec5}

In this section we explain how mp rational functions fit into the wider frame of free noncommutative function theory. They are essentially higher order nc functions in the sense of \cite{KVV3}. We also introduce the difference-differential operators for mp rational functions in Subsection \ref{subsec52}, and briefly discuss linearization or realization in Subsection \ref{subsec53}.

\subsection{Higher order noncommutative rational functions}\label{subsec51}

We now put mp rational functions into the setting of free function theory. Proposition \ref{p:basic} implies that elements of $\mpr$ are essentially nc functions of order $G-1$ of \cite[Section 3.1]{KVV3}; some care has to be taken because matrices $a\otimes (b\oplus c)$ and $(a\otimes b)\oplus (a\otimes c)$ are different in general, but always unitarily similar. As before, let
$$\cM^{\ug}_{\seqn}=\prod_{i=1}^G \mm{n_i}^{g_i},\qquad \cM^{\ug}=\bigcup_{\seqn} \cM^{\ug}_{\seqn}.$$
In addition, set
$$\cN_{\seqn}=\bigotimes_{i=1}^G \mm{n_i}=\mm{n_1\cdots n_G},\qquad
\cN=\bigcup_{\seqn} \cN_{\seqn}.$$
Each $\rrb\in\mpr$ yields a partially defined map $\rrb:\cM^{\ug} \dashrightarrow \cN$, which satisfies the following properties by Proposition \ref{p:basic} and \eqref{e:tenspi}:
\begin{enumerate}\label{ncfun}
\item $\rrb$ respects direct sums in the first factor and $\rrb$ respects direct sums in other factors up to conjugation by a permutation matrix;
\item $\rrb$ respects similarities in every factor.
\end{enumerate}
By \cite[Section 3.1]{KVV3} (and especially \cite[Remark 3.5]{KVV3}, which is relevant for our tensor product setting) and a slight loosening of the definition, we can thus say that $\rrb$ is a nc rational function of order $G-1$. The results on higher order nc functions from \cite[Chapter 3]{KVV3} still hold for mp rational functions if we replace equalities with equivalences up to a canonical shuffle, i.e., conjugation by a matrix built from appropriate commutation matrices.

\subsection{Difference-differential operators}\label{subsec52}

Next we describe partial difference-differential operators for mp rational functions (cf.~\cite[Section 3.5]{KVV3}). Since we defined them as equivalence classes of rational expressions, we can proceed as in \cite[Definition 4.4]{KVV2}. Let $1\le i\le G$ and $X'^{(i)}=\{X_1'^{(i)},\dots, X_{g_i}'^{(i)}\}$. For $1\le j\le g_i$ we define a map
\begin{equation}\label{e:delta}
\Delta^{(i)}_j\colon \re\to \cR_{\kk}(X'^{(i)}\cupn X)
\end{equation}
recursively by the following rules ($\alpha,\beta\in\kk$ and $r,s\in\re$):
\begin{subequations}\label{e:rules}
\begin{align}
\Delta^{(i)}_j(\alpha)&=0, \\
\Delta^{(i)}_j(X_{\jjmath}^{(\iimath)})&=\delta_{\iimath=i,\jjmath=j}, \\
\Delta^{(i)}_j(r+s)&=\Delta^{(i)}_j(r)+\Delta^{(i)}_j(s);\\
\Delta^{(i)}_j(r s)&=r(\dots,X'^{(i)},\dots) \Delta^{(i)}_j(s)+\Delta^{(i)}_j(r) s(\dots,X^{(i)},\dots), \\
\Delta^{(i)}_j(r^{-1})&=-r(\dots,X'^{(i)},\dots)^{-1} \Delta^{(i)}_j(r) r(\dots,X^{(i)},\dots)^{-1}.
\end{align}
\end{subequations}

By definition it is clear that $\Delta^{(i)}_j$ maps mp-nondegenerate expressions to mp-non\-degenerate expressions. For $v\in\kk^{g_i}$ denote $v\cdot \Delta^{(i)}(r)\eqcolon \sum_j v_j \Delta^{(i)}_j(r)$. We have the following analog of \cite[Theorem 4.8]{KVV3}.

\begin{prop}\label{p:fund}
For $r\in\re$ assume
$$(a^{(1)},a^{(2)},\dots,a^{(G)}),(a'^{(1)},a^{(2)},\dots,a^{(G)})\in \mdom r.$$
Let $v\in\kk^{g_1}$ be arbitrary and denote $\bar{v}=(v_1 I\otimes I,\dots, v_{g_1} I\otimes I)$. Then
\begin{equation}\label{e:fund}
r\left(
\begin{pmatrix}
a'^{(1)}\otimes I & \bar{v}\\
0 & I\otimes a^{(1)}
\end{pmatrix},\dots\right)^{\mps}=
\begin{pmatrix}
r(a'^{(1)}\otimes I,\dots)^{\mps} & (v\cdot \Delta^{(1)}(r))(a'^{(1)},a^{(1)},\dots)^{\mps}\\
0 & r(I\otimes a^{(1)},\dots)^{\mps}
\end{pmatrix},
\end{equation}
where the identity matrices on the left side of each Kronecker product have the same size as the components of $a'^{(1)}$ and the identity matrices on the right side of the Kronecker products have the same size as the components of $a^{(1)}$.
\end{prop}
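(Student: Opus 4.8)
The plan is to prove \eqref{e:fund} by induction on the inversion height of $r$, mirroring the recursive definition \eqref{e:rules} of $\Delta^{(1)}_j$. The key observation is that the $2\times 2$ block-upper-triangular matrix
$$
\widehat{a}^{(1)}_j=\begin{pmatrix} a'^{(1)}_j\otimes I & v_j I\otimes I\\ 0 & I\otimes a^{(1)}_j\end{pmatrix}
$$
should be thought of as a single ``level'' in the first partite component, so that evaluating $r$ at $(\widehat{a}^{(1)},a^{(2)},\dots,a^{(G)})$ is a genuine mp-evaluation (after the Kronecker products are taken, the first component contributes block-triangular matrices and the other components contribute their usual tensor factors). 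First I would check the base case: if $r$ has no inverses, then $\rrb\in\txx$, and the claim reduces to the standard fact that a polynomial evaluated on a block-upper-triangular tuple is block-upper-triangular with the obvious diagonal blocks and with upper-right block given by the (Leibniz-type) derivation $v\cdot\Delta^{(1)}$; this is exactly the content of rules (a)--(d) in \eqref{e:rules}.

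For the inductive step, I would argue that the block structure in \eqref{e:fund} is preserved under sums, products, and inverses. Sums are immediate from additivity of both sides. For products, the upper-right block of a product of two block-upper-triangular matrices $PQ$ is $P_{11}Q_{12}+P_{12}Q_{22}$, which matches rule (d) once one identifies $P_{11},Q_{22}$ with the evaluations of the factors at $a'^{(1)}\otimes I$ and $I\otimes a^{(1)}$ respectively. The only subtle point is inversion: if $r=s^{-1}$ and $s$ is mp-defined at both $(a^{(1)},\dots)$ and $(a'^{(1)},\dots)$, then by the inductive hypothesis $s(\widehat{a}^{(1)},\dots)^{\mps}$ is block-upper-triangular with invertible diagonal blocks $s(a'^{(1)}\otimes I,\dots)^{\mps}$ and $s(I\otimes a^{(1)},\dots)^{\mps}$, hence is itself invertible; inverting a block-upper-triangular matrix with invertible diagonal blocks gives diagonal blocks $s(a'^{(1)}\otimes I,\dots)^{-\mps}$, $s(I\otimes a^{(1)},\dots)^{-\mps}$ and upper-right block $-s(a'^{(1)}\otimes I,\dots)^{-\mps}\,(v\cdot\Delta^{(1)}(s))(\dots)^{\mps}\,s(I\otimes a^{(1)},\dots)^{-\mps}$, which is precisely rule (e). This also shows $r=s^{-1}$ is mp-defined at $\widehat{a}^{(1)}$, closing the induction on domains as well.

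The main obstacle I anticipate is bookkeeping rather than conceptual: one must be careful that the map $r\mapsto r(\widehat{a}^{(1)},a^{(2)},\dots,a^{(G)})^{\mps}$ really is a well-defined mp-evaluation, i.e.\ that taking Kronecker products with $\tau_i$ in the components $i\ge 2$ commutes with the $2\times 2$ block structure coming from the first component, and that the identity matrices appearing in $\bar v$ and in the statement are assigned consistent sizes so that all products are conformable. Concretely, after applying \eqref{e:can} the matrix $\widehat{a}^{(1)}_j$ lands in $\Mat_2\otimes\Mat_{n_1'}\otimes\Mat_{n_1}$ embedded into the appropriate $\Mat_{2n_1'n_1}$, and one checks that the $\Mat_2$-block decomposition is respected by all the algebra operations — this is where the ``$I\otimes I$'' normalization in $\bar v$ and the size conventions in the proposition statement earn their keep. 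Once this is set up cleanly, the induction goes through exactly as sketched; I would then remark (as in \cite[Theorem 4.8]{KVV3} and \cite[Definition 4.4]{KVV2}) that, since the two sides of \eqref{e:fund} agree whenever defined and the construction is compatible with $\sim^{\mps}$, the operator $\Delta^{(i)}_j$ descends to a well-defined map on $\mpr$, giving the difference-differential calculus advertised at the start of the subsection.
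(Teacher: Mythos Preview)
Your proposal is correct and follows exactly the approach the paper takes: the paper's own proof reads, in full, ``The formula \eqref{e:fund} is proved by induction on the construction of $\re$ using \eqref{e:rules}. We leave these routine computations to the reader.'' Your write-up simply carries out those routine computations (base case, sums, products, inverses matching rules (a)--(e)) and makes explicit the block-structure/Kronecker bookkeeping that the paper suppresses; the only minor over-reach is the final paragraph about $\Delta^{(i)}_j$ descending to $\mpr$, which belongs to Corollary~\ref{c:der} rather than to this proposition.
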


\begin{proof}
The formula \eqref{e:fund} is proved by induction on the construction of $\re$ using \eqref{e:rules}. We leave these routine computations to the reader.
\end{proof}

If we consider $\kk \plangle X^{(1)} \mpsymbol\cdots\mpsymbol X'^{(i)} \mpsymbol X^{(i)} \mpsymbol\cdots\mpsymbol X^{(G)} \prangle$ as a $\mpr$-bi\-module with $\kk$-linear actions
$$\rrb\cdot \rsb\eqcolon r(\dots,X'^{(i)},\dots)\rsb \qquad\text{and}\qquad \rsb\cdot \rrb\eqcolon \rsb\rrb$$
for $\rrb\in\mpr$ and $\rsb\in \kk \plangle X^{(1)} \mpsymbol\cdots\mpsymbol X'^{(i)} \mpsymbol X^{(i)} \mpsymbol\cdots\mpsymbol X^{(G)} \prangle$, then we obtain the following corollary.

\begin{cor}\label{c:der}
The map $\Delta^{(i)}_j$ induces a $\kk$-linear derivation
$$\Delta^{(i)}_j\colon\mpr\to \kk \plangle X^{(1)} \mpsymbol\cdots\mpsymbol X'^{(i)} \mpsymbol X^{(i)} \mpsymbol\cdots\mpsymbol X^{(G)} \prangle.$$
\end{cor}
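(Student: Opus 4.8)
The plan is to show that the operator $\Delta^{(i)}_j$ on rational expressions respects the equivalence relation $\sim^{\mps}$, so that it descends to a well-defined map on $\mpr$, and then to read off $\kk$-linearity and the Leibniz identity directly from the recursive rules \eqref{e:rules}. By Remark \ref{r:shuffle} it suffices to treat the case $i=1$; the general case follows by relabelling the parts.

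The heart of the argument, and the step I expect to require the most care, is well-definedness. First I would record, by a routine induction along the five constructors in \eqref{e:rules}, the description
$$\mdom\Delta^{(1)}_j(r)=\bigl\{(a'^{(1)},a^{(1)},b)\colon (a'^{(1)},b)\in\mdom r \ \text{and}\ (a^{(1)},b)\in\mdom r\bigr\};$$
indeed, for each rule the conditions making its right-hand side mp-defined are precisely the conjunction of the conditions making $r$ (hence all its subexpressions) mp-defined at the ``primed'' and at the ``unprimed'' argument, and the matrix blocks appearing have matching sizes once one tracks the tensor position of the extra part $X'^{(1)}$. Now let $r_1\sim^{\mps}r_2$ and pick any $(a'^{(1)},a^{(1)},b)\in\mdom\Delta^{(1)}_j(r_1)\cap\mdom\Delta^{(1)}_j(r_2)$; by the description just obtained, $(a'^{(1)},b)$ and $(a^{(1)},b)$ both lie in $\mdom r_1\cap\mdom r_2$. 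Applying Proposition \ref{p:fund} with $v=\ee_j$ to $r_1$ and to $r_2$, the block upper triangular tuple $c=\left(\begin{pmatrix}a'^{(1)}\otimes I & \bar v\\ 0 & I\otimes a^{(1)}\end{pmatrix},b\right)$ lies in $\mdom r_1\cap\mdom r_2$, so $r_1(c)^{\mps}=r_2(c)^{\mps}$ because $r_1\sim^{\mps}r_2$; comparing the $(1,2)$-blocks of this matrix identity via \eqref{e:fund} gives $\Delta^{(1)}_j(r_1)(a'^{(1)},a^{(1)},b)^{\mps}=\Delta^{(1)}_j(r_2)(a'^{(1)},a^{(1)},b)^{\mps}$. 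As this holds at every point of the common mp-domain, $\Delta^{(1)}_j(r_1)\sim^{\mps}\Delta^{(1)}_j(r_2)$. Since $\Delta^{(1)}_j$ sends mp-nondegenerate expressions to mp-nondegenerate expressions, it therefore descends to a well-defined map on $\mpr$, and $\kk$-linearity follows from the additivity rule in \eqref{e:rules} together with the product rule applied to a scalar factor (using $\Delta^{(i)}_j(\alpha)=0$).

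It then remains to identify the induced map as a derivation. With the $\mpr$-bimodule structure on $\kk\plangle X^{(1)}\mpsymbol\cdots\mpsymbol X'^{(i)}\mpsymbol X^{(i)}\mpsymbol\cdots\mpsymbol X^{(G)}\prangle$ described above, in which $\rrb$ acts on the left by the primed substitution $r(\dots,X'^{(i)},\dots)$ and on the right by ordinary multiplication, the product rule in \eqref{e:rules} reads verbatim as $\Delta^{(i)}_j(\rrb\rsb)=\rrb\cdot\Delta^{(i)}_j(\rsb)+\Delta^{(i)}_j(\rrb)\cdot\rsb$, i.e.\ the Leibniz identity for a derivation into this bimodule; since this identity already holds at the level of expressions, it passes to $\mpr$ by the well-definedness established above. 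The only genuine subtlety, as noted, lies in the bookkeeping of matrix sizes and tensor positions when invoking Proposition \ref{p:fund} and in verifying the domain description for $\Delta^{(1)}_j(r)$.
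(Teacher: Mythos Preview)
Your proposal is correct and takes essentially the same approach as the paper: reduce to $i=1$ via Remark \ref{r:shuffle}, then use Proposition \ref{p:fund} with $v=\ee_j$ to read off the equality of $\Delta^{(1)}_j(r_1)$ and $\Delta^{(1)}_j(r_2)$ from the $(1,2)$-block of the block-triangular evaluation, with linearity and the Leibniz rule following directly from \eqref{e:rules}. Your version is in fact more explicit than the paper's, which compresses the well-definedness step into a single sentence; your domain description for $\Delta^{(1)}_j(r)$ and the explicit block comparison make the argument easier to follow.
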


\begin{proof}

It is enough to show that $\Delta^{(i)}_j$ preserves the equivalence classes with respect to mp-evaluations; linearity and Leibniz's rule then follow from \eqref{e:rules}. Let us thus assume that mp-evaluations of two rational expressions $r_1$ and $r_2$ coincide.

First consider the case $i=1$. Plugging the standard basis of $\kk^{g_1}$ for $v$ in the formula \eqref{e:fund} of Proposition \ref{p:fund} implies that mp-evaluations of $\Delta^{(1)}_j(r_1)$ and $\Delta^{(1)}_j(r_2)$ coincide. For $i>1$ we recall that
$$\kk\plangle X^{(1)} \mpsymbol\cdots\mpsymbol X^{(i)} \mpsymbol\cdots\mpsymbol X^{(G)} \prangle\cong 
\kk\plangle X^{(i)} \mpsymbol\cdots\mpsymbol X^{(1)} \mpsymbol\cdots\mpsymbol X^{(G)}\prangle$$
and this isomorphism corresponds to the conjugation on the evaluations by the appropriate commutation matrix. By applying it to the formula \eqref{e:fund} we can again conclude that mp-evaluations of $\Delta^{(i)}_j(r_1)$ and $\Delta^{(i)}_j(r_2)$ coincide.
\end{proof}

\begin{rem}
The partial difference-differential operators behave nicely with respect to various specializations between mp rational functions. For example, homomorphisms
\begin{align*}
\kk\Langle X^{(1)}\cupn X^{(2)} \mpsymbol\cdots\mpsymbol X^{(G)}\Rangle
&\to \kk\Langle X^{(1)} \mpsymbol\cdots\mpsymbol X^{(G)}\Rangle,\\
\kk\Langle X'^{(1)}\cupn X'^{(2)} \mpsymbol X^{(1)}\cupn X^{(2)} \mpsymbol\cdots\mpsymbol X^{(G)}\Rangle
&\to \kk\Langle X'^{(1)}\mpsymbol X^{(1)} \mpsymbol\cdots\mpsymbol X^{(G)}\Rangle,
\end{align*}
where the latter is given by $X'^{(2)}_j\mapsto X^{(2)}_j$, induce local homomorphisms
\begin{align*}
\kk\plangle X^{(1)}\cupn X^{(2)} \mpsymbol\cdots\mpsymbol X^{(G)}\prangle &\dashrightarrow \mpr ,\\
\kk\plangle X'^{(1)}\cupn X'^{(2)} \mpsymbol X^{(1)}\cupn X^{(2)} \mpsymbol\cdots\mpsymbol X^{(G)}\prangle
&\dashrightarrow \kk\plangle X'^{(1)}\mpsymbol X^{(1)} \mpsymbol\cdots\mpsymbol X^{(G)}\prangle
\end{align*}
by the universal property of these skew fields. According to the defining rules \eqref{e:rules} of partial difference-differential operators it is easy to verify that the diagram
\begin{center}
	\begin{tikzpicture}[scale=1]
	\node (A) at (0,1.5) {$\kk\plangle X^{(1)}\cupn X^{(2)} \mpsymboltikz\cdots\mpsymboltikz X^{(G)}\prangle$};
	\node (B) at (8,1.5) {$\kk\plangle X^{(1)} \mpsymboltikz\cdots\mpsymboltikz X^{(G)}\prangle$};
	\node (C) at (0,0) {$\kk\plangle X'^{(1)}\cupn X'^{(2)} \mpsymboltikz X^{(1)}\cupn X^{(2)} \mpsymboltikz\cdots\mpsymboltikz X^{(G)}\prangle$};
	\node (D) at (8,0) {$\kk\plangle X'^{(1)}\mpsymboltikz X^{(1)} \mpsymboltikz\cdots\mpsymboltikz X^{(G)}\prangle$};
	\path[->,dashed]
	(A) edge (B)
	(C) edge (D);
	\path[->]
	(A) edge node[right]{$\Delta^{(1)}_j$} (C)
	(B) edge node[right]{$\Delta^{(1)}_j$} (D);
	\end{tikzpicture}
\end{center}
commutes.
\end{rem}

\subsection{Realizations}\label{subsec53}

Lastly, we address the realization aspect of mp rational functions. We refer the reader to \cite{BR,BGM,KVV2,HMV,Vol} for the classical realization theory of nc formal power series and nc rational functions.

Let $Z=\{Z_1,\dots,Z_g\}$. If $r\in\rezz$ is a nondegenerate rational expression and $p\in\dom_m r$, then $r$ has a {\bf realization about $p$} as in \cite[Subsection 5.1]{Vol}: there exist $n,\rho\in \N$ and
$$\cc\in \mm{m}^{1\times n},\quad\bb\in \mm{m}^{n\times 1}, \quad
C_{ij},B_{ij}\in\mm{m}^{n\times n}$$
for $1\le i\le g$ and $1\le j\le \rho$ such that
\begin{equation}\label{e:real}
r=\cc\left(I_n-\sum_{i=1}^g\sum_{j=1}^{\rho} C_{ij}(Z_i-p_i) B_{ij}\right)^{-1}\bb
\end{equation}
holds on $\mm{sm}^g$ for $s\in\N$ wherever both sides are defined. Here the evaluation of the right-hand side of \eqref{e:real} at a point $a\in \mm{sm}^g$ is defined as
\begin{equation}\label{e:realeval}
\cc^{\iota}\left(I_{nsm}-\sum_{i=1}^g\sum_{j=1}^{\rho} C_{ij}^{\iota}(I_n\otimes(a_i-p_i^{\iota}))B_{ij}^{\iota}\right)^{-1}\bb^{\iota},
\end{equation}
where $\iota=I_s\otimes\id:\mm{m}\to \mm{sm}$ is applied entry-wise to $\cc$, $C_{ij}$, $B_{ij}$ and $\bb$. To shorten the notation we introduce the linear pencil
$$L(Z)=\sum_{i=1}^g\sum_{j=1}^{\rho} C_{ij} Z_i B_{ij}.$$
The realization \eqref{e:real} of $r$ is denoted $(\cc,L,\bb;p)$ and we say that $n$ is its dimension. The union of the sets
$$\dom_{sm}(\cc,L,\bb;p)=\{a\in\mm{sm}^g\colon \det(I-L(a-p))\neq0\}$$
over all $s\in\N$ is called the {\bf domain} of $(\cc,L,\bb;p)$. We refer to \cite[Subsection 3.3]{Vol} for the construction of a realization of $r$ about $a$ and to \cite[Section 4]{Vol} for obtaining realizations that are minimal (with respect to their dimension) among all the realizations of $r$ about $p$. We also recall the following two facts, which are relevant in our setting.

\begin{enumerate}[(I)]
\item \ {\kern-.333em}\cite[Theorem 5.5 and Corollary 5.9]{Vol}: if $\dom_m r\neq\emptyset$, then for almost every $p\in\dom_m r$ there exists a minimal realization $(\cc,L,\bb;p)$ of $r$ such that $\dom_{sm}(\cc,L,\bb;p)=\dom r(z)$ for every $s\in\N$, where $z$ is a $g$-tuple of $sm\times sm$ generic matrices.
\item \ {\kern-.333em}\cite[Theorem 5.10]{Vol}: the dimension of a minimal realization of $r$ about $p$ is independent of $p\in\dom r$ and is therefore an invariant of $r$.
\end{enumerate}

Let now $r\in\re$ be a mp-nondegenerate expression. Since $r^{\mps}(a)=r(\tau(a))$ for $a\in\mdom r$, we can use realizations of $r$ to compute its mp-evaluations. More precisely, 
if $(\cc,L,\bb;p)$ is a realization of $r$,
then
$$\rrb(a)=r(a)^{\mps}=\cc\big(I-L(\tau(a)-p)\big)^{-1}\bb$$
for all $a\in\mdom[\seqn] r\cap \dom_{n_1\cdots n_G}(\cc,L,\bb;p)$ and $\seqn\in\N$ such that $m$ divides $n_1\cdots n_G$. Hence we can interpret $(\cc,L,\bb;p)$ as a realization of the mp rational function $\rrb$. Moreover, if $(\cc,L,\bb;p)$ is a realization as in (I), then
$$\tau\left(\dom_{\seqn} \rrb\right) \subseteq \dom_{n_1\cdots n_G}(\cc,L,\bb;p).$$
Indeed, if $r'$ is a representative of $\rrb$ and $\mdom[\seqn] r'\neq\emptyset$, then
$$\tau\left(\mdom[\seqn] r'\right) \subseteq \tau\left(\dom r'(x)\right) = \tau\left(\dom r(x)\right) 
\subseteq \dom r(z) = \dom_{n_1\cdots n_G}(\cc,L,\bb;p),$$
where $z$ is a $\sum_i g_i$-tuple of generic matrices from $\gm{n_1\cdots n_G}(z)$ and $x$ is a $\sum_i g_i$-tuple of multipartite generic matrices from $\gm{\seqn}(x)$. Therefore we can use $(\cc,L,\bb;p)$ to compute every evaluation of $\rrb$.

For $\rrb\in\mpr$ and $p\in\dom \rrb$ let $d_p(\rrb)$ denote the minimum of dimensions of realizations about $p$ representing $\rrb$. In contrast with the results on realizations of nc rational functions \cite{HMV,Vol}, one cannot expect any uniqueness of minimal realizations for mp rational functions. However, using (II) it is not hard to see that $d_p(\rrb)=d(\rrb)$ is actually independent of $p$ and thus an invariant of $\rrb$, which measures the complexity of $\rrb$. This uniformity suggests that minimal realizations of $\rrb$ about points in $\dom\rrb$ can be considered as normal forms of $\rrb$. They compensate for the lack of a canonical form for elements in $\mpr$.

\begin{appendices}

\section{Bi-free rational functions}\label{appA}

In this appendix we briefly touch upon a variant of mp free variables which is ubiquitous in free probability (cf.~\cite[Subsection 1.5 and Definition 2.6]{Voi1}), namely bi-free variables. We shall also present a matrix model for a natural skew field of fractions of the algebra of bi-free variables.

For $g\in\N$ let
$$\bfa=\kk\Langle X_1,\dots,X_g,Y_1,\dots, Y_g\Rangle  \big/ \left([X_i,Y_j]\colon i\neq j\right)$$
be the {\bf bi-free algebra} (on $2g$ variables). Moreover, let
$$\cA = \kk\Langle \,\{X'_1,\dots,X'_g\} \mpsymbol \{X''_1,Y''_1\} \mpsymbol\cdots\mpsymbol \{X''_g,Y''_g\}
\mpsymbol \{Y'_1,\dots,Y'_g\}\Rangle;$$
then we have a well-defined homomorphism
$$\vartheta:\bfa\to \cA,\qquad 
X_i\mapsto X'_i X''_i,\quad Y_i\mapsto Y'_i Y''_i.$$
By considering natural monomial bases in $\bfa$ and $\cA$ it is easy to verify that $\vartheta$ is an embedding. Since $\cA$ has a skew field of fractions by Theorem \ref{t:div}, $\bfa$ also has a skew field of fractions. Moreover, we can describe it explicitly by looking at appropriate evaluations of rational expressions in $X$ and $Y$ over $\kk$. Let
$$\cO_n^g=\left(\mm{n}^2\right)^g\times \left(\mm{n}^2\right)^g,\qquad \cO^g=\bigcup_n \cO_n^g.$$
Then we can evaluate $r\in \rexy$ at a point
\begin{equation}\label{e:bfpoint}
(a,b)=(a'_1,a''_1,\dots, a'_g,a''_g,b'_1,b''_1,\dots, b'_g,b''_g)\in\cO_n^g
\end{equation}
by replacing $X_i$ with
$$a'_i\otimes \overbrace{I \otimes \cdots\otimes a''_i\otimes\cdots I}^g\otimes I$$
and $Y_i$ with
$$I\otimes \overbrace{I \otimes \cdots\otimes b''_i\otimes\cdots I}^g\otimes b'_i.$$
Such a {\bf bf-evaluation} of $r$ is denoted $r(a,b)^{\bfs}\in\mm{n^{g+2}}$. The set of equivalence classes of nondegenerate expressions with respect to bf-evaluations on $\cO^g$ (cf.~Remark \ref{r:nn}) then becomes a skew field of fractions of $\bfa$, which we denote $\bfsf$. Its elements are called {\bf bi-free rational functions}.

For $n\in\N$ let
$$\bfvar{n}=\left\{(a,b)\in \mm{n}^g\times\mm{n}^g\colon 
[a_i,b_j]=0\ \forall i\neq j \right\}$$
be the set of {\bf bi-free tuples} of $n\times n$ matrices. The next proposition will demonstrate that bi-free rational functions have well defined nc-evaluations on bi-free tuples of matrices.

Let $K$ be an algebraically closed field and $a\in \GL_m(K)$. By considering the Jordan decomposition of $a$ it is easy to see that there exists $\tilde{a}\in \GL_m(K)$ such that $a=\tilde{a}^2$ and $\tilde{a}$ is a polynomial in $a$. Let us say that such $\tilde{a}$ is a \emph{regular square root} of $a$.

\begin{prop}\label{p:bfvar}
Let $r\in\rexy$.
\begin{enumerate}[\rm(a)]
\item If $r$ nc-vanishes on $\bfvar{n^{g+2}}$, then it bf-vanishes or is bf-undefined 
on $\cO_n^g$.
\item If $r$ bf-vanishes on $\cO_n^g$, then it bf-vanishes or is bf-undefined 
on $\bfvar{n}$.
\end{enumerate}
\end{prop}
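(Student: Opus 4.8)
I would follow the pattern of Proposition~\ref{p:comm}, with part (a) immediate and part (b) requiring a bi-free substitute for the collapsing homomorphism $\ell|_{\cB}$ used there.

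For (a) it suffices to check that the bf-evaluation substitution carries every $(a,b)\in\cO_n^g$ to a $2g$-tuple of $n^{g+2}\times n^{g+2}$ matrices lying in $\bfvar{n^{g+2}}$. Indeed the matrix put in place of $X_i$ is non-trivial only on the Kronecker legs $\{1,i{+}1\}$, and the one put in place of $Y_j$ only on $\{j{+}1,g{+}2\}$; these index sets are disjoint precisely when $i\neq j$, so the images of $X_i$ and $Y_j$ commute for $i\neq j$. Hence nc-vanishing of $r$ on $\bfvar{n^{g+2}}$ forces $r^{\bfs}(a,b)=0$ whenever $r$ is bf-defined at $(a,b)$, which is the assertion.

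For (b) I would argue exactly as in Proposition~\ref{p:comm}(b): assuming $r$ is nc-defined at some point of $\bfvar n$ (otherwise there is nothing to prove), let $z$ be a $2g$-tuple of generic $n^{g+2}\times n^{g+2}$ matrices, write $r(z)=p(z)q(z)^{-1}$ with $p,q\in\pxy$, and use the hypothesis together with Zariski density to conclude that the polynomial $p$ bf-vanishes on $\cO_n^g$. It then remains to transfer this to $\bfvar n$: for $(A,B)\in\bfvar n$ one needs a level $N$, a point $(a,b)\in\cO_N^g$, and a $\kk$-algebra homomorphism $\psi$ from the subalgebra of $\mm{N^{g+2}}$ generated by the bf-evaluation of $X,Y$ at $(a,b)$ onto a subalgebra of $\mm n$, carrying the images of $X_i,Y_i$ to $A_i,B_i$; then $p(A,B)=\psi\bigl(p^{\bfs}(a,b)\bigr)=\psi(0)=0$, and therefore $r$ nc-vanishes wherever it is nc-defined on $\bfvar n$. (Equivalently, one may factor $r$ through the embedding $\vartheta\colon\bfa\hookrightarrow\cA$ and invoke Proposition~\ref{p:comm}(b) for the $(g{+}2)$-partite expression $\vartheta(r)$, but the final step still needs the same homomorphism.)

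The construction of $(a,b)$ and $\psi$ is the technical heart, and this is where the regular square roots come in. Matching the $i$-th block of the bf-evaluation to $A_i=a_i'a_i''$ and $B_i=b_i''b_i'$ under a collapsing homomorphism forces, through the disjointness pattern of the Kronecker legs, that $a_i'$ commute with every $a_j'',b_j'',b_j'$ (so, in particular, $a_i'$ and $a_i''$ commute and $A_i$ is a product of two commuting matrices), and symmetrically for $B_i$; the non-commutativity of $A_i$ with $A_j$ then has to be carried in leg $1$, that of $B_i$ with $B_j$ in leg $g{+}2$, and that of $A_i$ with $B_i$ in leg $i{+}1$, while $[A_i,B_j]=0$ for $i\neq j$ is exactly what makes the remaining commutations available. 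To obtain such a factorization one first reduces, by a Zariski-density argument inside $\bfvar n$, to the case that all $A_i,B_i$ are invertible, then passes to $\bar\kk$, where the regular square root of $A_i$ (a polynomial in $A_i$) supplies the two commuting halves after a suitable inflation of the matrix size. I expect this splitting-and-inflation step and the verification that $\psi$ is multiplicative to be the main obstacle; the remaining bookkeeping---handling a degenerate $r$, propagating the factorization $r=pq^{-1}$, and the passage between $\kk$ and $\bar\kk$---is routine and parallels Proposition~\ref{p:comm}.
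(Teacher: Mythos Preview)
Your approach is essentially the paper's. For (a) the paper says only ``trivial'', which is your observation that the bf-substitution lands in $\bfvar{n^{g+2}}$. For (b) the paper likewise writes $r(z)=p(z)q(z)^{-1}$ with $p,q\in\pxy$ on generic $n^{g+2}\times n^{g+2}$ matrices, deduces that $p$ bf-vanishes on generic $n\times n$ data, and then collapses via the linear map $\ell\colon c_0\otimes\cdots\otimes c_{g+1}\mapsto c_0c_1\cdots c_{g+1}$ applied to the bf-evaluation at the point with $a_i'=a_i''=\tilde a_i$ and $b_i'=b_i''=\tilde b_i$ taken to be regular square roots.

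Two differences from your sketch. First, invertibility is not arranged by a Zariski-density reduction over $\bar\kk$ but by the shift $(A,B)\mapsto(A+tI,B+tI)$ over $K=\overline{\kk(t)}$: this tuple stays bi-free, is automatically invertible, and the conclusion $p(A+tI,B+tI)=0$ in $\Mat_n(K)$ yields $p(A,B)=0$ by specializing $t\to 0$. Second, there is no ``inflation of the matrix size'': one works throughout with $n\times n$ matrices, just over the larger field $K$, so the level $N$ in your sketch is simply $n$. The commutation $[\tilde a_i,\tilde b_j]=0$ for $i\ne j$ --- valid because each regular square root is a polynomial in its argument and $[A_i+tI,B_j+tI]=0$ --- is exactly the condition the paper invokes to assert that $\ell|_\cB$ is a $\kk$-algebra homomorphism; this is the step you correctly singled out as the crux.
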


\begin{proof}
(a) Trivial.

(b) Let $z$ be a $2g$-tuple of $n^{g+2}\times n^{g+2}$ generic matrices. Then there exist $p,q\in \pxy$ such that
$$r(z)=p(z)q(z)^{-1}.$$
If $x',x'',y',y''$ are $g$-tuples of $n\times n$ generic matrices, then $p$ bf-vanishes on $(x,y)$ by assumption.

Choose an arbitrary point $(a,b)\in\bfvar{n}$. Let $K$ denote the algebraic closure of the field $\kk(t)$, where $t$ is an auxiliary symbol. Matrices $a_i+tI$ and $b_i+tI$ are invertible over $K$, so they have regular square roots $\tilde{a}_i,\tilde{b}_i\in \GL_n(K)$. Now consider the unital $\kk$-subalgebra $\cB\subseteq \Mat_{n^{g+2}}(K)$ generated by
$$a_i'=\tilde{a}_i\otimes \overbrace{I \otimes \cdots\otimes \tilde{a}_i\otimes\cdots I}^g\otimes I,
\qquad 
b_i'=I\otimes \overbrace{I \otimes \cdots\otimes \tilde{b}_i\otimes\cdots I}^g\otimes \tilde{b}_i.$$
Since $[a_i+tI,b_j+tI]=0$ for $i\neq j$, we also have $[\tilde{a}_i,\tilde{b}_j]=0$ for $i\neq j$, so the restriction of the $K$-linear map
$$\ell:\Mat_{n^{g+2}}(K)\cong \Mat_n(K)^{\otimes (g+2)}\to \Mat_n(K),\quad c_0\otimes\cdots\otimes c_{g+1}
\mapsto c_0\cdots c_{g+1}$$
to $\cB$ is a homomorphism of $\kk$-algebras. Therefore
$$p(a+tI,b+tI)=\ell|_{\cB}(p(a',b'))=\ell|_{\cB}(0)=0$$ 
and so $p(a,b)=0$. Hence $r(a,b)=0$ if $r$ is defined at $(a,b)$.
\end{proof}

\begin{cor}\label{c:last}
Let $r\in\rexy$; then $r$ is bf-defined and nonzero on $\cO^g$ if and only if $r$ is nc-defined and nonzero on $\bfvar{n}$ for some $n\in\N$.
\end{cor}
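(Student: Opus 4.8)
This corollary is the bi-free counterpart of the equivalence (i)$\Leftrightarrow$(ii) in Theorem \ref{t:sci}, and the plan is to deduce it from Proposition \ref{p:bfvar} in the same spirit, so that it plays for $\bfsf$ the role Amitsur's theorem plays for the free skew field.

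For the implication ``$\Rightarrow$'', suppose $r$ is bf-defined and nonzero at some $(a,b)\in\cO_m^g$. By construction $r(a,b)^{\bfs}$ is the nc-evaluation of $r$ at the tuple produced by the bf-substitution, and a bookkeeping of the Kronecker factors shows that this tuple lies in $\bfvar{m^{g+2}}$, since the tensor slots occupied by the image of $X_i$ are disjoint from those occupied by the image of $Y_j$ as soon as $i\neq j$. Hence $r$ is nc-defined, with nonzero value, at this point of $\bfvar{m^{g+2}}$; this is just the contrapositive of Proposition \ref{p:bfvar}(a).

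For ``$\Leftarrow$'', suppose $r$ is nc-defined and nonzero at some $(\alpha,\beta)\in\bfvar{n}$. The contrapositive of Proposition \ref{p:bfvar}(b) gives that $r$ does not bf-vanish on $\cO_n^g$, so it remains only to exhibit a single point of $\cO$ in the domain of $r$; granting this, $r$ is bf-defined and nonzero on $\cO$ and we are done. Since $\bfvar{n}\subseteq\mm{n}^{2g}$, the expression $r$ is nc-defined at a tuple of matrices over $\kk$, hence represents a nonzero element of the free skew field $\kk\plangle X\cupn Y\prangle$ by the $G=1$ case (Corollary \ref{c:rz} with $D=\kk$, i.e.\ Amitsur's theorem). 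To produce a bf-point in the domain of $r$ I would rerun the construction in the proof of Proposition \ref{p:bfvar}(b), but starting from the point $(\alpha,\beta)$ itself: pass to $K=\overline{\kk(t)}$, take regular square roots $\tilde\alpha_i,\tilde\beta_i$ of $\alpha_i+tI,\beta_i+tI$ to form a $K$-point of $\cO_n^g$, and identify, via the collapsing homomorphism $\ell$, the bf-evaluation of $r$ at this point with $r(\alpha+tI,\beta+tI)$, which is defined and nonzero since it specializes to $r(\alpha,\beta)\neq0$ at $t=0$. An induction on the inversion height of $r$ shows that $r$ is bf-defined at this $K$-point, and since $\kk$ is infinite one can then specialize $t$, unwinding the relevant Zariski-open conditions, to a genuine bf-point over $\kk$. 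An alternative is to transport $r$ through the embedding $\vartheta\colon\bfa\hookrightarrow\cA$ of the appendix, observe that the bf-evaluations of $r$ on $\cO$ coincide with the mp-evaluations of $\vartheta(r)$ on the square level sets of $\cA$ (Remark \ref{r:nn}), and invoke Theorem \ref{t:sci} applied to $\cA$.

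I expect the bf-nondegeneracy step in ``$\Leftarrow$'' to be the part requiring the most care: the image of the bf-substitution is a rather thin subvariety of $\bfvar{m^{g+2}}$, so nc-definedness of $r$ at a point of $\bfvar{n}$ does not by itself place any point of $\cO$ in the domain of $r$, and one is thereby pushed back to the square-root device used for Proposition \ref{p:bfvar}(b). Everything else is a formal consequence of Proposition \ref{p:bfvar} together with the $G=1$ case already established in the text.
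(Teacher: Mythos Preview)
Your ``$\Rightarrow$'' is fine. For ``$\Leftarrow$'' you correctly isolate bf-nondegeneracy as the only issue, but neither of your two fixes works as stated. In the square-root approach, the collapsing map $\ell|_{\cB}$ is a (generally non-injective) algebra homomorphism $\cB\to\Mat_n(K)$, so knowing that $\ell\big(s^{\bfs}(\tilde\alpha,\tilde\beta)\big)=s(\alpha+tI,\beta+tI)$ is invertible does \emph{not} make $s^{\bfs}(\tilde\alpha,\tilde\beta)\in\Mat_{n^{g+2}}(K)$ invertible; the induction on inversion height therefore cannot climb through an inverse at that specific $K$-point. In the $\vartheta$-approach, Theorem~\ref{t:sci} for $\cA$ would need $\vartheta(r)$ to be nc-defined and nonzero at a point of the multipartite variety of $\cA$, which forces every $X'_i$ to commute with every $Y'_j$; a bi-free point $(\alpha,\beta)$ only gives $[\alpha_i,\beta_j]=0$ for $i\neq j$, so there is no evident way to manufacture such a point from $(\alpha,\beta)$.

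The paper leaves the corollary unproved because it is meant to be the verbatim bf-analog of ``(i)$\Leftrightarrow$(ii)'' in Theorem~\ref{t:sci}, which is likewise declared to follow from Proposition~\ref{p:comm}. The clean route is to prove, by induction on the construction of $r$, that nc-definedness at some point of $\bfvar{n}$ implies bf-definedness on some $\cO_m^g$. For $r=s^{-1}$: by hypothesis $s$ is nc-defined and nonzero at a point of $\bfvar{n}$, and by the inductive hypothesis $s$ is bf-defined on some $\cO_m^g$. Bf-domains go up just as mp-domains do (bf-evaluations of $r$ are mp-evaluations of $\vartheta(r)$, so Remark~\ref{r:updown} applies), and direct sums embed $\bfvar{n}\hookrightarrow\bfvar{mn}$; at level $mn$ the contrapositive of Proposition~\ref{p:bfvar}(b) then gives that $s$ is bf-nonzero, hence $r=s^{-1}$ is bf-defined there. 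Once $r$ itself is bf-nondegenerate, one more ``go up and apply Proposition~\ref{p:bfvar}(b)'' yields bf-nonzero.
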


Bi-free rational functions were constructed using a particular embedding $\vartheta$ of the bi-free algebra into a multipartite free algebra. While $\bfsf$ is thus a skew field of fractions of $\bfa$, it is unclear whether it is universal. Nevertheless, Corollary \ref{c:last} indicates that $\bfsf$ is at least canonical with respect to all finite-dimensional representations of $\bfa$, as the latter are given precisely by the bi-free tuples of matrices.
\end{appendices}



\end{document}